\title{Weighted words at degree two, II: flat partitions, regular partitions, and application to level one perfect crystals}
\author{Isaac KONAN} \address{IRIF \\ University
of Paris\\  Paris,  75013, France}
\email{konan@irif.fr}
\date{}
\newcommand{\m}{\medbreak}
\newcommand{\bi}{\bigbreak}
\definecolor{foge}{rgb}{0.1, 0.6, 0.1}
\numberwithin{equation}{section}
\numberwithin{figure}{section}
\newtheorem{fig}[figure]{Figure}
\newcommand{\Thm}[1]{Theorem \ref{#1}}
\newcommand{\Cor}[1]{Corollary \ref{#1}}
\newcommand{\Lem}[1]{Lemma \ref{#1}}
\newcommand{\Prp}[1]{Proposition \ref{#1}}
\newcommand{\Sct}[1]{Section \ref{#1}}
\newcommand{\Def}[1]{Definition \ref{#1}}
\newcommand{\Expl}[1]{Example \ref{#1}}
\newcommand{\Ll}{\Lambda}
\newcommand{\Pp}{\mathcal{P}}
\newcommand{\Ppp}{\Pp^{\gg}_{\co}}
\newcommand{\Reg}{\mathcal{R}^{\ep,\co}_{1}}
\newcommand{\Regg}{\mathcal{R}^{\ep,\co}_{2}}
\newcommand{\Regk}{\mathcal{R}^{\ep,\co}_{k}}
\newcommand{\Flt}{\mathcal{F}^{\ep,\co}_{1}}
\newcommand{\Fltt}{\mathcal{F}^{\ep,\co}_{2}}
\newcommand{\Fltk}{\mathcal{F}^{\ep,\co}_{k}}
\newcommand{\Z}{\mathbb{Z}}
\newcommand{\C}{\mathcal{C}}
\newcommand{\Cc}{\mathcal{C}_\ltimes}
\newcommand{\B}{\mathcal{B}}
\newcommand{\Rr}{\mathcal{R}}
\newcommand{\ep}{\epsilon}
\newcommand{\gte}{\gtrdot_{\ep}}
\newcommand{\gtei}{\gtrdot_{\ep_2}}
\newcommand{\wt}{\overline{\mathrm{wt}}}
\newcommand{\Odd}{\mathcal{O}_{\ep}}
\newcommand{\Sc}{\mathcal{S}_{\ep}}
\newcommand{\od}{\succ_{\ep}}
\newcommand{\odp}{\gg_{\ep}}
\newcommand{\odg}{\gg^{\ep}}
\newcommand{\ot}{\otimes}
\newcommand{\p}{\mathfrak p}
\newcommand{\I}{\{0,\ldots,n\}}
\newcommand{\E}{\mathcal{E}_{\ep}}
\newcommand{\la}{\lambda}
\newcommand{\sss}{\{0,\ldots,s\}}
\newcommand{\ssss}{\{0,\ldots,s-1\}}
\newcommand{\co}{c_{g}}
\numberwithin{equation}{section}
\newtheorem{theo}{Theorem}[section]
\newtheorem{prop}[theo]{Proposition}
\newtheorem{lem}[theo]{Lemma}
\newtheorem{cor}[theo]{Corollary}
\newtheorem{rem}[theo]{Remark}
\newtheorem{ex}[theo]{Example}
\theoremstyle{definition} \newtheorem{deff}[theo]{Definition}
\begin{document}
\maketitle
\begin{abstract}
In a recent work, Keith and Xiong gave a refinement of Glaisher's theorem by using a Sylvester-style bijection. In this paper, we introduce two families of colored partitions, flat and regular partitions, and generalize the bijection of Keith and Xiong to these partitions. We then state two results, the first at degree one, where partitions have parts with primary colors, and the second result at degree two for secondary-colored partitions, using the result of the first paper of this series on Siladi\'c's identity. 
These results allow us to easily retrieve the Frenkel-Kac character formulas of level one standard modules for the type $A_{2n}^{(2)}, D_{n+1}^{(2)}$ and $B_n^{(1)}$, and also to make the connection between the result stated in paper one and the representation theory.
\end{abstract}
\section{Introduction}
\subsection{History}
Let $n$ be a positive integer. A partition of $n$ is defined as a non-increasing sequence of positive integers, called the parts of the partition, and whose sum is equal to $n$. For example, the partitions of $5$ are 
\[(5),(4,1),(3,2),(3,1,1),(2,2,1),(2,1,1,1), \,\,\text{and}\,\,(1,1,1,1,1,1,1)\,\cdot\]
A partition identity is a combinatorial identity that links two or several sets of integer partitions. The study of such identities has interested mathematicians for centuries, dating back to
Euler's proof that there are as many partitions of $n$ into distinct parts as partitions of $n$ into odd parts. The Euler distinct-odd identity can be written in terms of $q$-series with the following expression:
\begin{equation}\label{eq:euler}
(-q;q)_\infty = \frac{(q^2;q^2)_\infty}{(q;q)_\infty}=\frac{1}{(q;q^2)_\infty}\,\cdot
\end{equation}
In the latter formula, $(x;q)_m = \prod_{k=0}^{m-1} (1-xq^k)$
for  any $m\in \mathbb{N}\cup \{\infty\}$ and $x,q$ such that $|q|<1$. We also define for any $a_1,\ldots,a_s$ the expression $(a_1,\ldots,a_s;q)_m = (a_1;q)_m \cdots (a_s;q)_m$. 
\m We note that while the Euler identity is not difficult to prove by computing the generating function of both sets of partitions, the bijection that links these sets is not trivial.
\m
In \cite{G83}, Glaisher bijectively proved the first broad generalization of the Euler identity. Let $m$ be a positive integer. We define an $m$-flat partition to be a partition where
the differences between two consecutive parts, as well the smallest part, are less than $m$, and an $m$-regular partition to be a partition with parts not divisible by $m$.
The generalization of Euler's identity given by Glaisher, and  which makes the connection between $m$-flat and $m$-regular partitions, is stated in the following theorem.
\begin{theo}[Glaisher]
For a fixed positive integer $n$, the following sets of partitions are equinumerous:
\begin{enumerate}
\item the $m$-regular partitions of $n$,
\item the partitions of  $n$ with fewer than $m$ occurrences for each positive integer,
\item the $m$-flat partitions of $n$. 
\end{enumerate}
The corresponding $q$-series is 
\begin{equation}
\prod_{n\geq 1} (1+q^n+ q^{2n}+\cdots+q^{n(m-1)}) = \prod_{\substack{n\geq 1 \\ m \nmid n}} \frac{1}{(1-q^n)} = \frac{(q^m;q^m)_{\infty}}{(q;q)_{\infty}} \,\cdot
\end{equation}
\end{theo}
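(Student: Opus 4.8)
The plan is to compute the generating function of each of the three families and to check that all three coincide with the products displayed in the statement; since equality of generating functions forces equinumerosity coefficient by coefficient, this simultaneously proves the combinatorial claim and the $q$-series identity. In parallel I would exhibit explicit weight-preserving bijections, which is more in keeping with the bijective spirit of the paper: a base-$m$ map between families (1) and (2), and conjugation of Young diagrams between families (2) and (3).

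For the generating functions, family (2) is the easiest: a partition in which every positive integer $k$ occurs with multiplicity between $0$ and $m-1$ is determined by an independent choice of multiplicity at each $k$, so its generating function is $\prod_{k\ge 1}(1+q^k+\cdots+q^{(m-1)k})$, the leftmost product. Using the finite geometric sum $1+q^k+\cdots+q^{(m-1)k}=(1-q^{mk})/(1-q^k)$ and telescoping the resulting product yields $(q^m;q^m)_\infty/(q;q)_\infty$, the rightmost expression. For family (1), the generating function is $\prod_{k\ge 1,\,m\nmid k}1/(1-q^k)$; factoring the full product $\prod_{k\ge1}1/(1-q^k)=1/(q;q)_\infty$ as the product over $m\nmid k$ times the product over multiples of $m$ (the latter being $1/(q^m;q^m)_\infty$) gives again $(q^m;q^m)_\infty/(q;q)_\infty$, which is the middle product.

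For family (3) I would pass to gap coordinates. Writing an $m$-flat partition as $\lambda_1\ge\cdots\ge\lambda_\ell\ge 1$ and setting $d_i=\lambda_i-\lambda_{i+1}$ with the convention $\lambda_{\ell+1}=0$, the flatness conditions (consecutive differences and smallest part all less than $m$) are exactly $0\le d_i\le m-1$ for every $i$, while the weight becomes $n=\sum_i i\,d_i$. Hence the generating function is $\prod_{i\ge1}(1+q^i+\cdots+q^{(m-1)i})$, matching family (2). Concretely this identifies the bijection (2)$\leftrightarrow$(3) as conjugation: the number of parts of $\lambda'$ equal to $i$ is the $i$-th gap $d_i$ of $\lambda$, so $\lambda$ is $m$-flat if and only if $\lambda'$ has every multiplicity at most $m-1$. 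For the bijection (1)$\leftrightarrow$(2), I would use that every $k$ factors uniquely as $k=m^a b$ with $m\nmid b$: given an $m$-regular partition in which $b$ (with $m\nmid b$) occurs $c$ times, write $c=\sum_a e_a m^a$ in base $m$ with digits $0\le e_a\le m-1$ and let the part $m^a b$ occur $e_a$ times; this lands in family (2) and preserves the weight since $c\,b=\sum_a e_a(m^a b)$. The inverse regroups the parts sharing a common $m$-free part $b$.

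The routine steps are the geometric-sum telescoping and the conjugation identity, both of which are standard. The one place to argue carefully is the bijectivity and weight-preservation of Glaisher's base-$m$ map: this rests on the uniqueness of the factorization $k=m^a b$ with $m\nmid b$ and on the uniqueness of base-$m$ digits, which together guarantee that collecting, expanding, and regrouping are mutually inverse. I expect this verification, rather than any genuine difficulty, to be the main point requiring attention.
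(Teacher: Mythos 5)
Your proposal is correct: the generating-function computations, the gap-coordinate/conjugation argument identifying families (2) and (3), and Glaisher's base-$m$ expansion map between families (1) and (2) are all sound, and together they establish both the equinumerosity and the $q$-series identity. Note, however, that the paper itself offers no proof of this statement; it is quoted as classical background (citing Glaisher's 1883 bijective proof), and the paper's actual machinery proceeds in a different direction. Where your bijection between (1) and (2) rearranges parts arithmetically via the factorization $k=m^ab$, $m\nmid b$, the line of work the paper builds on (Stockhofe, then Keith--Xiong's Theorem~1.2, generalized by the paper's Theorem~2.10) uses a Sylvester-style bijection that matches $m$-flat partitions directly with $m$-regular ones while preserving, for each residue $i \bmod m$, the number of parts congruent to $i$ --- a refinement that neither Glaisher's map nor conjugation respects (conjugation scrambles part sizes entirely, and the base-$m$ map moves a part $b$ to parts $m^ab$ in different residue classes). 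So your route is the elementary, self-contained classical proof, entirely adequate for the statement as given; the paper's route is heavier but yields the refined color/congruence-preserving correspondence that the rest of the paper depends on, and of which Glaisher's theorem is only the coarsest consequence.
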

Another bijective proof of the Euler identity was given by Sylvester \cite{Sy82}. However, it was a open problem to find a suitable generalization of Sylvester's bijection for the Glaisher identity. 
\bi
This problem was solved, a century after the paper of Sylvester, by Stockhofe in his Ph.D thesis \cite{Sto82}.  
In the 90's, seminal works of Bessenrodt \cite{Bes94}, and Pak and Postinkov \cite{PaPo98}, related the Sylvester algorithm to the alternating sign sum of integer partitions. They then gave new refinements of the Euler identity.
\bi In this paper, we focus on a broad refinement of Glaisher's identity given by Keith and Xiong \cite{KX19}. Their proof used a variant of the Sylvester-style bijection given by Stockhofe.
\begin{theo}[Keith-Xiong]\label{theo:kx}
Let $m\geq 2$, $u_1,\ldots,u_{m-1}$ be non-negative integers, and let $n$ be a positive integer. Then, the number of $m$-flat partitions of $n$  \textbf{with} $u_i$ parts congruent to $i\mod m$ is equal to the number of $m$-regular partitions of $n$ \textbf{into} $u_i$ parts congruent to $i\mod m$.  
\end{theo}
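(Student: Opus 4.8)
The plan is to upgrade Glaisher's theorem (already available above), which only gives the coarse equinumerosity of $m$-flat and $m$-regular partitions of $n$, to a bijection that additionally preserves, for every $i\in\{1,\dots,m-1\}$, the number of parts congruent to $i \pmod m$. The natural vehicle is the Sylvester-style bijection of Stockhofe (the $m$-modular generalization of Sylvester's classical odd$\leftrightarrow$distinct correspondence) that Keith and Xiong adapt; unlike Glaisher's merging/splitting map, the Sylvester map has exactly the right statistical behaviour. So I would construct an explicit bijection $\Phi$ from $m$-flat to $m$-regular partitions and then verify the two things Glaisher does not give for free: that $\Phi$ is size-preserving and, crucially, that it matches the residue-class vector $(u_1,\dots,u_{m-1})$.

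Concretely, I would factor $\Phi$ through the partitions $\mu$ all of whose multiplicities are $<m$. Conjugation identifies these with $m$-flat partitions: if $\lambda$ is $m$-flat and $\mu=\lambda'$, then the multiplicity of $j$ in $\mu$ equals $\lambda_j-\lambda_{j+1}$ (with $\lambda_{j+1}=0$ beyond the last part), so the flatness conditions---all consecutive gaps, and the smallest part itself, are $<m$---translate exactly into all multiplicities of $\mu$ being $<m$. Since $\mu'=\lambda$, the statistic ``number of parts of $\lambda$ congruent to $i$'' is literally the number of $j$ with $\lambda_j\equiv i \pmod m$, i.e. a statistic read off the column lengths of $\mu$. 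I would then apply Stockhofe's $m$-modular Sylvester bijection $\Psi$, sending multiplicity-$<m$ partitions to $m$-regular partitions, and set $\Phi(\lambda)=\Psi(\lambda')$. The whole theorem then reduces to the single claim that, for every $i$, the number of parts of $\mu'$ congruent to $i \pmod m$ equals the number of parts of $\Psi(\mu)$ congruent to $i \pmod m$.

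That $\Psi$ is a size-preserving bijection is exactly the content of Glaisher/Stockhofe, so it may be taken as known; the real work is the refined statistic. I would prove it by working in the $m$-modular Ferrers diagram, where $\Psi$ is realized as a hook- (or diagonal-) reading procedure: each successive hook has a well-defined length modulo $m$, and I would show that this residue is simultaneously the residue of the part of $\Psi(\mu)$ that the hook produces and the residue of a column length of $\mu$ that it consumes. To fix the reading conventions I would first carry this out for $m=2$, where $\Psi$ is Sylvester's map and the statement specializes to the known refinement---the number of parts of the odd partition equals the number of odd parts of the conjugate distinct partition, in the spirit of the Bessenrodt and Pak--Postnikov refinements cited above---and then lift it to general $m$ through the $m$-modular diagram. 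Bijectivity I would confirm by exhibiting the inverse directly, peeling parts off the regular partition and reinserting them as hooks, which also re-proves size-preservation.

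The main obstacle is precisely this residue bookkeeping. Conjugation trades ``parts'' for ``column lengths,'' so I must control what the Sylvester reading does to column-length residues rather than to the parts themselves, and I must do so for all $i\in\{1,\dots,m-1\}$ at once and uniformly in $n$; the parts $\equiv 0 \pmod m$ of the flat partition play the role of free filler and must be shown not to disturb the remaining residue counts. I expect the cleanest way to tame this is an induction that removes one hook---equivalently, one part of the regular partition---at a time, maintaining the residue vector as the invariant, with the $m$-modular diagram reducing each step to a finite, checkable local move.
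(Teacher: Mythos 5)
Your opening reduction is sound: conjugation does identify the $m$-flat partitions of $n$ with the partitions of $n$ whose part multiplicities are all $<m$, and it converts the statistic ``number of parts $\equiv i \pmod m$'' into ``number of columns of length $\equiv i \pmod m$.'' The genuine gap is everything after that. What you may legitimately import from Glaisher/Stockhofe at that point is the existence of a size-preserving bijection $\Psi$ from multiplicity-bounded partitions to $m$-regular partitions; the further claim that a hook-reading realization of $\Psi$ matches column-length residues of $\mu$ with part residues of $\Psi(\mu)$ is, via your own reduction, \emph{exactly equivalent to the theorem being proved}, and your proposal never establishes it. The $m$-modular hook procedure is not defined, the invariant of the one-hook-at-a-time induction is not formulated, and no local move is verified. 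The only case you anchor to known results is $m=2$, where the claim is the alternating-sum refinement of Sylvester's map in the Bessenrodt and Pak--Postnikov spirit; but that case gives no indication of how the bookkeeping survives for $m\geq 3$, where a single hook can cross several residue classes and must also absorb the columns of length divisible by $m$ that you call ``filler.'' In short, the proposal reduces Keith--Xiong's statement to an equivalent unproven claim and then plans, rather than executes, its proof.

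It is worth seeing how the paper avoids this difficulty, because the route is structurally different. The paper deduces \Thm{theo:kx} from \Thm{theo:flatreg1}, proved bijectively in \Sct{sect:deg1}, by specializing to the colors $\C=\{c_0,\ldots,c_{m-1}\}$, ground $\co=c_0$, energy $\ep(c_i,c_j)=\chi(i<j)$, followed by the dilation $(q,c_0,c_1,\ldots,c_{m-1})\mapsto(q^m,1,q,\ldots,q^{m-1})$, under which the color $c_i$ records the residue $i$ and the ground-colored parts become the parts divisible by $m$. Crucially, the bijection $\Omega$ of \Sct{sect:deg1} never conjugates the whole partition: the parts carrying nonzero residues form a skeleton $\mu$ whose color sequence is carried over verbatim, and only an auxiliary classical partition $\nu'$---encoding the ground (i.e.\ divisible) parts together with the descent data $D_W$---is conjugated and its conjugate $\nu$ added onto $\mu$. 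With this decomposition, preservation of the residue vector $(u_1,\ldots,u_{m-1})$ is automatic by construction, and all the work goes into showing that the recombination is well defined, lands in $\Reg$, and is invertible. That is precisely the work your route defers to the unspecified residue bookkeeping, so as it stands the proposal does not constitute a proof.
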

\bi The main goal of this second paper is to state two results, beyond the refinement of Keith-Xiong, that link a general definition of flat partitions and regular partitions given in terms of weighted words.
\subsection{Statement of Results}
\subsubsection{Generalized flat and regular partitions}
We first recall the notion of grounded partitions, introduced by the author and Dousse in \cite{DK2}.
\bi 
Let $\C$ be a set of colors, and let $\Z_{\C} = \{k_c : k \in \Z, c \in \C\}$ be the set of colored parts. We here refer to $|k_c|=k$ and $c(k_c)=c$ respectively as the size and the color of the colored part $k_c$.
\begin{deff}
Let $\gg$ be a binary relation defined on $\Z_{\C}$.
A \textit{generalized colored partition} with relation $\gg$ is a finite sequence $(\pi_0,\ldots,\pi_s)$ of parts, where for all $i \in \{0,\ldots,s-1\},$ $\pi_i\gg\pi_{i+1}.$
\end{deff}
In the following, the quantity $|\pi|=|\pi_0|+\cdots+|\pi_s|$ is the size of the partition $\pi$, and $C(\pi) = c(\pi_0)\cdots c(\pi_s)$ is its color sequence. 
Let us choose a particular color $\co$. We now introduce the grounded partitions as a special kind of generalized colored partitions.
\begin{deff}
A \textit{grounded partition} with ground $\co$ and relation $\gg$ is a non-empty generalized colored partition $\pi=(\pi_0,\ldots,\pi_s)$ with relation $\gg$, such that $\pi_s = 0_{\co}$, and when $s>0$, $\pi_{s-1}\neq 0_{\co}.$
\noindent Let $\Ppp$ denote the set of such partitions.
\end{deff}
In the following, we explicitly write $\pi=(\pi_0,\ldots,\pi_{s-1},0_{\co})$. The trivial partition in $\Ppp$ is then $(0_{\co})$.  
\begin{ex}Consider the set of classical integer partitions $\pi=(\pi_1,\ldots,\pi_s)$, with integer parts satisfying $\pi_1\geq \cdots \geq \pi_s>0$, where the empty partition $\emptyset$ is such that $s=0$.
This set is in bijection 
with the set $\Pp_{c}$ of the grounded colored partitions with only one color $c$, defined by the relation 
\begin{equation}
 k_c\gg l_c \Longleftrightarrow k-l\geq 0\,\cdot
\end{equation}
In fact, the correspondence is defined by 
\begin{equation*}
 (\pi_1,\ldots,\pi_s) \mapsto ((\pi_1)_c,\ldots,(\pi_s)_c,0_c)=(\pi'_0,\cdots,\pi'_{s-1},0_c)\,,
\end{equation*}
where the empty partition $\emptyset$ corresponds to the colored part $0_c$.
\end{ex}
\bi Note that from a set $\C$, one can build the \textit{colors of degree} $k$ as products of $k$ colors in $\C$, for any positive integer $k$. The set of degree $k$ colors is denoted $\C^k$, and is equal to $\{c_1\cdots c_k: c_1,\ldots, c_k \in \C\}$. Therefore, $\C^1=\C$ refers to the set of \textit{primary} colors, and we say that we use weighted words at degree $k$ if the colors of the parts have degree at most $k$, i.e if the set of colors equals $\C^1 \sqcup \cdots \sqcup \C^k$. 
\bi 
We now extend the notion of flatness to the grounded partitions.
\begin{deff}
Let $\ep$ be a function from $\C^2$ to $\Z$, called \textit{energy}. A \textit{flat partition} with ground $\co$ and energy $\ep$ is a grounded partition with ground $\co$ and the relation  $\gte$ defined by
\begin{equation}
k_c \gte l_d \Longleftrightarrow k-l = \ep(c,d)\,\cdot
\end{equation}
\end{deff}
These partitions are determined by their sequence of colors as well as the energy $\ep$. This comes from the fact that for such a partition $\pi=(\pi_0,\ldots,\pi_{s-1},0_{\co})$, the  computation of the size of $\pi_k$ gives the following relation,
\[|\pi_k|=\sum_{l=k}^{s-1}\ep(c(\pi_l),c(\pi_{l+1}))\,\cdot\]
\begin{ex}\label{ex:mflat}A good example of flat partitions are the $m$-flat partitions. It suffices to consider the set of colors 
$\C =\{c_0,\ldots, c_{m-1}\}$, $\co=c_0$ and define the energy $\ep$ by 
\[\ep(c_i,c_j) = 
\begin{cases}
i-j \ \ \ \ \ \ \ \ \text{if}\ \ i\geq j\\
m+i-j\ \ \text{if}\ \ i< j
\end{cases}\,\cdot\]
With these definitions, for any flat partition, its parts with color $c_i$ necessarily have a size congruent to  $i$ modulo $m$. Moreover, observe that $\ep$ has values in $\{0,\ldots,m-1\}$. We then associate to any $m$-flat partition $\la = (\la_1,\ldots,\la_s)$ the flat partition $\pi=(\pi_0,\ldots,\pi_{s-1},0_{c_0})$ such that, for all $k\in \{0,\ldots,s-1\}$, 
\[|\pi_{k}|=\la_{k+1}\quad\text{and}\quad c(\pi_k)= c_{[\la_{k+1}]_m}\,,\]
where $[\la_{k+1}]_m = \la_{k+1} \mod m$.
\end{ex}
\bi 
Let us now generalize the notion of regularity. 
\begin{deff}
A $c$-regular partition with ground $\co$ and with relation $\gg$ is a grounded partition $\pi=(\pi_0,\ldots,\pi_{s-1},0_{\co})$ with ground $\co$ and relation $\gg$, such that $c(\pi_k)\neq c$ for all $k\in \{0,\ldots,s-1\}$. When $c=\co$, it is called a regular partition in $\co$.
\end{deff}
\begin{ex}\label{ex:mreg}An example of such partitions are the $m$-regular partitions.  It suffices to consider as in \Expl{ex:mflat} the set of colors 
$\C =\{c_0,\ldots, c_{m-1}\}$, $\co=c_0$ and define the relation $\gg$ by 
\[k_{c_i}\gg l_{c_j} \Longleftrightarrow k\geq l \quad\text{and}\quad k-l\equiv i-j \mod m\,,\]
so that, in any regular partition, the size of parts with color $c_i$ are necessarily congruent to $i$ modulo $m$.
We then associate to any $m$-regular partition $\la = (\la_1,\ldots,\la_s)$ the regular partition $\pi=(\pi_0,\ldots,\pi_{s-1},0_{c_0})$ in $c_0$ and such that, for all $k\in \{0,\ldots,s-1\}$, 
\[\pi_{k}=\la_{k+1}\quad\text{and}\quad c(\pi_k)= c_{[\la_{k+1}]_m}\,\cdot\]
\end{ex}
\bi
In the remainder of this paper, we take for regular partitions a relation $\odp$, associated to a energy $\ep$, and defined by
\begin{equation}
k_c \odp l_d \Longleftrightarrow k-l \geq \ep(c,d)\,\cdot
\end{equation}
We then  call the relation $\odp$ the \textit{minimal difference condition} given by energy $\ep$, and a regular partition with relation $\odp$ is referred to as regular partition with energy $\ep$. 
\m
Considering the set of colors and the energy $\ep$ given \Expl{ex:mflat}, by \Thm{theo:kx}, there exists a bijection between the corresponding flat partitions with ground $c_0$ and energy $\ep$ and the regular partitions in $c_0$ and with energy $\ep$, such that the parts with color $c_i$ have sizes congruent to $i \mod m$. The latter regular partitions are those described in \Expl{ex:mreg}. Furthermore, the bijection occurs between the partitions of both kinds with a fixed size and numbers of occurrences of the colors different from the ground $c_0$.
In this spirit, the main theorems of this paper make a connection between the flatness and the regularity, and have the following formulation.
\begin{theo}\label{theo:flatreg}
Let $\C$ be a set of colors and let $\co\in \C$ be the ground. Then,
for some suitable energies $\ep'$ and $\ep$, there exists a bijection between \textbf{a certain} set of flat partitions with ground $\co$ and energy $\ep$ and \textbf{a certain} set of regular partitions in ground $\co$ and with energy $\ep'$.
\end{theo}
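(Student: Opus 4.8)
The plan is to adapt the Sylvester--Stockhofe algorithm underlying \Thm{theo:kx} to the weighted-words setting, the point being that the colored data records exactly the residue information that the modular bijection of Keith and Xiong keeps track of. First I would make the two ``certain sets'' precise and isolate the compatibility that the word ``suitable'' abbreviates. A flat partition $\pi=(\pi_0,\ldots,\pi_{s-1},0_{\co})$ is rigid: by the size formula $|\pi_k|=\sum_{l=k}^{s-1}\ep(c(\pi_l),c(\pi_{l+1}))$ it is entirely determined by its color sequence $C(\pi)$, and the grounded constraints ($\pi_s=0_\co$, $\pi_{s-1}\neq 0_\co$) together with positivity of the sizes single out the admissible words; in particular the ground color $\co$ may recur in the interior of $C(\pi)$. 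A regular partition in $\co$ with energy $\ep'$, by contrast, forbids $\co$ in the interior but leaves the sizes free subject to the minimal difference condition $k-l\geq\ep'(c,d)$. Hence the bijection I want must preserve $|\pi|$ and the multiplicity of each non-ground color, while trading the interior occurrences of $\co$ against the slack allowed by the relation for $\ep'$.

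The suitability I would impose on the pair $(\ep,\ep')$ is precisely what makes the ``pass through $\co$'' operation size-neutral and reversible. It should force $\ep'$ to agree with $\ep$ on transitions between non-ground colors (so that a $\co$-free flat partition is already at the minimal gaps of a regular partition), and it should guarantee that an interior chain $c\gte\co\gte\cdots\gte\co\gte d$ with $r$ ground parts, whose total drop is $\ep(c,\co)+(r-1)\ep(\co,\co)+\ep(\co,d)$, decomposes after contraction to $c\,\cdots\,d$ as $\ep'(c,d)$ plus a nonnegative, uniquely recoverable amount of slack. Specializing to the $m$-flat/$m$-regular energy of \Expl{ex:mflat} and \Expl{ex:mreg}, where $\ep=\ep'$ and $\ep(\co,\co)=0$, recovers the hypotheses of \Thm{theo:kx}, so the general statement reduces correctly.

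With this in hand I would build the map by the generalized Stockhofe move. Reading $C(\pi)$ from the ground up, the interior occurrences of $\co$ cut $\pi$ into maximal $\co$-free blocks; the algorithm deletes each interior $\co$-part and pushes its size-contribution onto the remaining parts as additional slack, exactly as the Sylvester staircase redistributes a removed row. This produces a sequence whose color sequence is $C(\pi)$ with the interior $\co$'s erased, hence avoids $\co$ in the interior, and the suitability conditions are what let me check that consecutive parts now satisfy the minimal difference condition for $\ep'$; thus the output is a regular partition in $\co$ with energy $\ep'$. By construction the size is unchanged and no non-ground color is created or destroyed, so the prescribed statistics are preserved.

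It remains to invert the map, and this is where I expect the real work to be. The inverse must read off, from a regular partition, where and how many interior $\co$-parts to reinsert, that is, it must reconstruct the redistributed slack uniquely; in Keith and Xiong this uniqueness is automatic from the residues modulo $m$, and the crux here is to show that the suitability conditions on $(\ep,\ep')$ force the same unique decomposition of each gap into an ``$\ep'$-part plus a reinsertable $\co$-chain.'' I would establish this by induction on the number of interior $\co$-parts (equivalently, on the total slack), proving that the top-most reinsertion is forced and that removing it returns a strictly smaller instance, so that the two maps are mutually inverse and the claimed bijection follows.
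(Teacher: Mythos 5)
Your overall strategy---generalizing the Stockhofe/Keith--Xiong move to weighted words, with suitability conditions amounting to \eqref{eq:pos0} so that every interior pass through $\co$ has total drop $1$---is exactly the strategy of the paper's proof of the degree-one instance \Thm{theo:flatreg1} (the paper also proves a degree-two instance, \Thm{theo:flatreg2}, which your proposal does not touch, but the degree-one case already substantiates the statement as phrased). However, there is a genuine gap at the heart of your forward map: ``delete each interior $\co$-part and push its size-contribution onto the remaining parts'' is not injective in its natural reading. Take $\C=\{c_0,c_1\}$, $\co=c_0$, $\ep(c_i,c_j)=\chi(i<j)$ (the Keith--Xiong specialization with $m=2$), and color sequence $C=c_1c_1$. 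The two partitions of size $2$ in $\Flt(C)$ are $\pi^{(a)}=(1_{c_1},1_{c_0},0_{c_1},0_{c_0})$, with one ground part inserted at the descent between the two $c_1$'s, and $\pi^{(b)}=(1_{c_0},1_{c_0},0_{c_1},0_{c_1},0_{c_0})$, with two ground parts inserted above the top. Deleting the interior $c_0$-parts and redistributing their sizes as staircase columns sends \emph{both} to $(2_{c_1},0_{c_1},0_{c_0})$: for $\pi^{(a)}$ the surviving parts $(1,0)$ gain one column of height $1$, while for $\pi^{(b)}$ the surviving parts $(0,0)$ gain two columns of height $1$; meanwhile the regular partition $(1_{c_1},1_{c_1},0_{c_0})$ acquires no preimage. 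The collision happens precisely in the situation your ``uniquely recoverable slack'' clause glosses over: inserting ground parts at a descent $k$ (a place with $\ep(c_{k-1},c_k)=0$) raises every one of the $k$ non-ground parts above it by $1$, so the deleted part's own size is not its full size-contribution, and the leftover elevation cannot be told apart from slack produced by insertions made elsewhere. Your chain condition only accounts for the local drop across the contracted gap, not for this global $+k$ effect, so the hypotheses you impose are satisfied by the example and yet the map fails.

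What is missing is the paper's device of \emph{weighted} parts. When ground parts sit at a descent $k\in D_W$, one of them is recorded in the auxiliary partition $\nu'$ not with its own size but with its size plus $k$ (equation \eqref{eq:reci}): its column then simultaneously absorbs the $+1$ elevation of all $k$ non-ground parts above it and encodes the \emph{position} $k$ of the descent. In the example this sends $\pi^{(a)}$ to the column $(2)$, hence to $(1_{c_1},1_{c_1},0_{c_0})$, and $\pi^{(b)}$ to $(1,1)$, hence to $(2_{c_1},0_{c_1},0_{c_0})$, restoring the bijection. Invertibility is then obtained not by your proposed induction on the top-most reinsertion---which cannot get started, since in the collision above neither reinsertion is ``forced''---but by the threshold inequalities \eqref{eq:retour1} and \eqref{eq:retour2}, which show that inside the image one can separate weighted from unweighted columns and read off, from each weighted column, both the size of the ground part and the descent at which it must be reinserted (this is the content of the formulas \eqref{eq:range}--\eqref{eq:recip} for $\Omega^{-1}$). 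Until your construction specifies a bookkeeping mechanism of this kind, both the injectivity of the forward map and the uniqueness claim for the inverse remain unsupported.
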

\bi
Two such theorems are stated in Section 2. The first result, \Thm{theo:flatreg1}, is related to weighted words at degree one, i.e. for primary colors and parts, and energies satisfying $\ep=\ep'$. We here give a corollary of \Thm{theo:flatreg1} as the following analogue of Glaisher's theorem for $m$-regular partitions into distinct parts.
\bi 
\begin{cor}
\label{cor:analogueglaisher}
Let $m$ and $n$ be positive integers. Then, the number of $m$-regular partitions of $n$ into \textbf{distinct} parts is equal to the number of $(m+1)$-flat partitions of $n$, such that
\begin{itemize}
\item the smallest part is less than $m$,
\item two consecutive parts divisible by $m$ are necessarily equal,
\item two consecutive parts not divisible by $m$ and with the same congruence modulo $m$ are necessarily distinct.
\end{itemize}
\end{cor}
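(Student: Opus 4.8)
The plan is to realize both families as grounded partitions over the same palette $\C=\{c_0,\ldots,c_{m-1}\}$ with ground $c_0$, and to invoke \Thm{theo:flatreg1} for a single energy $\ep=\ep'$. I start from the energy of \Expl{ex:mflat}--\Expl{ex:mreg} and modify only its diagonal, setting
\begin{equation*}
\ep(c_i,c_j)=
\begin{cases}
i-j & \text{if } i>j,\\
m+i-j & \text{if } i<j,\\
0 & \text{if } i=j=0,\\
m & \text{if } i=j\neq 0.
\end{cases}
\end{equation*}
The only change from the Glaisher energy is to raise $\ep(c_i,c_i)$ from $0$ to $m$ for $i\neq 0$, and this single modification is exactly what turns ``with repeats'' into ``distinct'' on the regular side and ``$m$-flat'' into ``$(m+1)$-flat with the three conditions'' on the flat side. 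Since $\ep(c_i,c_j)\equiv i-j\pmod m$ in every case, the telescoping identity $|\pi_k|=\sum_{l\geq k}\ep(c(\pi_l),c(\pi_{l+1}))$ shows, exactly as in \Expl{ex:mflat}, that in any flat partition a part of colour $c_i$ has size $\equiv i\pmod m$.

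Next I identify the flat side. Let $\pi=(\pi_0,\ldots,\pi_{s-1},0_{c_0})$ be a flat partition with energy $\ep$ and relation $\gte$. Every gap equals $\ep(c(\pi_k),c(\pi_{k+1}))\in\{0,\ldots,m\}$, so $\pi$ is $(m+1)$-flat; the ground condition $\pi_{s-1}\neq 0_{c_0}$ forces its colour $c_j$ with $j\neq 0$, whence the smallest part has size $\ep(c_j,c_0)=j<m$. Two consecutive parts divisible by $m$ carry colour $c_0$ and differ by $\ep(c_0,c_0)=0$, i.e. are equal; two consecutive parts of the same nonzero residue carry the same colour $c_i$ and differ by $\ep(c_i,c_i)=m$, i.e. are distinct. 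Conversely, for an $(m+1)$-flat partition satisfying the three conditions I colour each part by its residue mod $m$: a gap between equal residues lies in $\{0,m\}$ and is pinned to $0$ (residue $0$, condition two) or to $m$ (nonzero residue, condition three), while a gap between distinct residues $i\neq j$ lies in $\{1,\ldots,m-1\}$ and is forced to the unique representative of $i-j$ there. In all cases the gap equals $\ep$, so the coloured sequence is a flat partition, and this gives a size-preserving bijection between flat partitions of energy $\ep$ and the $(m+1)$-flat partitions described in the corollary.

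On the regular side I keep the same energy $\ep$, now with the minimal difference relation $\odp$. As for the Glaisher energy discussed before \Thm{theo:flatreg}, the regular partitions produced by \Thm{theo:flatreg1} satisfy the residue property of \Expl{ex:mreg}: a part of colour $c_i$ has size $\equiv i\pmod m$, so apart from the ground no part is divisible by $m$. Moreover every gap satisfies $|\pi_k|-|\pi_{k+1}|\geq\ep(c(\pi_k),c(\pi_{k+1}))\geq 1$, and when two consecutive parts share a residue their gap is a positive multiple of $m$; hence the sizes strictly decrease and the underlying integer partition is $m$-regular into distinct parts. Conversely, colouring an $m$-regular partition into distinct parts by residues mod $m$ yields such a regular partition, because a positive gap of residue $i-j$ is at least the representative of $i-j$ in $\{1,\ldots,m-1\}$ when $i\neq j$, and at least $m$ when $i=j$. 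Applying \Thm{theo:flatreg1} with $\ep=\ep'$ and composing with the flat identification of the previous paragraph yields the desired size-preserving bijection, and equinumerosity for each $n$ follows.

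The gap computations are routine; the step requiring care is that the residue--colour dictionary, automatic on the flat side through the telescoping identity, must persist across \Thm{theo:flatreg1} to the regular image, so that the latter really consists of honest $m$-regular partitions rather than merely abstract coloured ones. This is precisely the phenomenon already made explicit for the Glaisher energy in the discussion preceding \Thm{theo:flatreg}; the only new ingredient here is the raised diagonal of $\ep$, and it remains to confirm that this $\ep$ meets the hypotheses of \Thm{theo:flatreg1}.
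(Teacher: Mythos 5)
Your identification of the flat side is correct, but the proof collapses at the final step: the energy you define cannot be fed into \Thm{theo:flatreg1}. That theorem lives in the setting where $\ep$ is a \emph{minimal} energy (\Def{def:minerg}), i.e.\ takes values in $\{0,1\}$, and its hypothesis \eqref{eq:pos0} demands a common value $\delta_g=\ep(\co,c)=1-\ep(c,\co)$ for all $c\neq\co$; for your $\ep$ one has $\ep(c_0,c_j)=m-j$ and $1-\ep(c_j,c_0)=1-j$, so no such $\delta_g$ exists once $m\geq 2$. This is not a removable technicality: the conclusion of the theorem is actually false for your $\ep$. Take $m=3$. The flat partitions of total energy $5$ are exactly $(4_{c_1},1_{c_1},0_{c_0})$ and $(3_{c_0},2_{c_2},0_{c_0})$, i.e.\ two of them, matching the corollary's count for $n=5$. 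But the regular partitions of total energy $5$ (relation $\odp$, no color $c_0$ before the ground) are $(5_{c_1},0_{c_0})$, $(5_{c_2},0_{c_0})$, $(4_{c_1},1_{c_1},0_{c_0})$, $(4_{c_2},1_{c_1},0_{c_0})$ and $(3_{c_2},2_{c_1},0_{c_0})$: five of them. The culprit is exactly the point you flagged in your last paragraph and then waved away: for regular partitions the relation $\odp$ is an inequality, so nothing ties the size of a part to the residue encoded by its color ($(5_{c_1},0_{c_0})$ is legal although $5\not\equiv 1 \pmod 3$), so the weighted-words regular set is strictly bigger than the set of residue-colored $m$-regular partitions into distinct parts. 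The discussion you cite before \Thm{theo:flatreg} does not rescue this, because there the congruence is built into the relation $\gg$ of \Expl{ex:mreg} rather than deduced from a minimal-difference condition.

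The missing idea --- and the paper's actual proof --- is to separate the combinatorics from the arithmetic rescaling. One applies \Thm{theo:flatreg1} to the genuine minimal energy of \Expl{ex:ordered1} with $I_2=\{0\}$, namely $\ep(c_i,c_j)=\chi(i<j)$ for $i\neq j$, $\ep(c_0,c_0)=0$ and $\ep(c_i,c_i)=1$ for $i\neq 0$, which does satisfy \eqref{eq:pos0} with $\delta_g=1$; only then does one perform the substitution $(q,c_0,c_1,\ldots,c_{m-1})\mapsto(q^m,1,q,\ldots,q^{m-1})$, realizing each colored part $k_{c_i}$ as the integer $mk+i$. Under this realization every part, flat or regular, has residue $i$ modulo $m$ by construction, the flat gaps become $m\chi(i<j)+i-j$ --- precisely your table of values --- and the regular conditions become ``$m$-regular with distinct parts''. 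Your two dictionary paragraphs are then correct as descriptions of the transformed partitions, and since the bijection $\Omega$ preserves the total energy and the sequence of non-ground colors, the integer $n$ is preserved. In short: your energy is the image of the right energy under the transformation, but the theorem must be applied before transforming, not after.
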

\begin{ex}Here we take $m=3$ and $n=16$, and the $3$-regular partition of $16$ into distinct parts are
\[(16),(14,2),(13,2,1),(11,5),(11,4,1),(10,5,1),(10,4,2),(8,7,1),(8,5,2,1),\text{  and  }(7,5,4)\,\]
and the $4$-flat partitions of $16$ of the second kind are
\[(8,5,2,1),(7,5,3,1),(7,4,3,2),(6,5,4,1),(6,5,3,2),(6,4,3,2,1),(5,4,3,3,1),(5,3,3,3,2),(4,3,3,3,2,1),\]
\[\text{  and  }(3,3,3,3,3,1)\,\cdot\]
\end{ex}
Another consequence of \Thm{theo:flatreg1} consists in easing the computation of characters of the representations of some affine Lie algebras. Formulas for such characters are given in \cite{DK2} as generating functions of the flat partitions with respect to a ground and a suitable energy function of perfect crystals.
\bi 
The second result, \Thm{theo:flatreg2}, concerns weighted words at degree two, and energies satisfying $\ep=\ep'$ up to some exceptions. 
This second theorem uses \Thm{theo:flatreg1} and the result from the first paper of this series \cite{IK201}, which we summarize in \Sct{sect:paper1}. 
In the particular case of representations of affine Lie algebras we study here, \Thm{theo:flatreg2} allows us to connect the difference conditions
of the result on Siladi\'c's identity, given in the first paper, and the energy function of the square, in terms of tensor product, of the vector representation of $A_{2n}^{(2)}$. We here state a corollary of \Thm{theo:flatreg2}, presented as a companion of Siladi\'c's identity.
\begin{cor}\label{cor:silcomp}
Let $n$ be a non-negative integer. Denote by $A(n)$ the number of partitions $\la=(\la_1,\ldots,\la_s)$ of $n$, into parts different from $2$, such that 
$\lambda_i -\lambda_{i+1}\geq 5$ and 
\begin{align*}
\lambda_i -\lambda_{i+1}  = 5 &\Rightarrow \, \lambda_i+\lambda_{i+1} \equiv \pm 3 \mod 16\,,\\
\lambda_i -\lambda_{i+1}  = 6 &\Rightarrow \, \lambda_i+\lambda_{i+1}  \equiv 0,\pm 4, 8\mod 16\,,\\
\lambda_i -\lambda_{i+1}  = 7 &\Rightarrow \, \lambda_i+\lambda_{i+1}  \equiv \pm 1, \pm 5, \pm 7 \mod 16\,,\\
\lambda_i -\lambda_{i+1}  = 8 &\Rightarrow \, \lambda_i +\lambda_{i+1} \equiv 0,\pm 2,\pm 6, 8\mod 16\,\,\cdot
\end{align*}
Denote by $B(n)$ the number of partitions $\la=(\la_1,\ldots,\la_s)$ of $n$, such that the parts congruent to $0 \mod 8$ can be overlined, the last part is less than $11$ and different from $2$ and $\overline{8}$, and $0\leq \lambda_i -\lambda_{i+1}\leq 16$ with the additional conditions
\begin{align*}
\lambda_i -\lambda_{i+1}  = 0&\Rightarrow \, \lambda_i+\lambda_{i+1} \equiv \overline{\overline{0}}\mod 16\,,\\
\lambda_i -\lambda_{i+1}  = 1 &\Rightarrow \, \lambda_i+\lambda_{i+1} \equiv \pm \overline{1} \mod 16\,,\\
\lambda_i -\lambda_{i+1}  = 2 &\Rightarrow \, \lambda_i+\lambda_{i+1} \equiv 0 \mod 16\,,\\
\lambda_i -\lambda_{i+1}  = 3 &\Rightarrow \, \lambda_i+\lambda_{i+1} \equiv \pm \overline{3} \mod 16\,,\\
\lambda_i -\lambda_{i+1}  = 4 &\Rightarrow \, \lambda_i+\lambda_{i+1} \equiv \pm 2, \pm \overline{4} \mod 16\,,\\
\lambda_i -\lambda_{i+1}  = 5 &\Rightarrow \, \lambda_i+\lambda_{i+1} \equiv \pm 3, \pm \overline{5}\mod 16\,,\\
\lambda_i -\lambda_{i+1}  = 6 &\Rightarrow \, \lambda_i+\lambda_{i+1}  \equiv 0,\pm 4, 8, \pm \overline{6}\mod 16\,,\\
\lambda_i -\lambda_{i+1}  = 7 &\Rightarrow \, \lambda_i+\lambda_{i+1}  \equiv \pm 1, \pm 5, \pm 7 , \pm \overline{7} \mod 16\,,\\
\lambda_i -\lambda_{i+1}  = 8 &\Rightarrow \, \lambda_i +\lambda_{i+1} \equiv 0,\pm 2,\pm 6, 8 , \overline{8}\mod 16\,\,,
\end{align*}
where the number lines of $\lambda_i+\lambda_{i+1}\mod 16$ indicates the numbers of overlined parts $\la_i,\la_{i+1}$. 
We then have that $A(n)=B(n)$, and the corresponding identity his
\begin{equation}
\sum_{n\geq 0} B(n)q^n = \sum_{n\geq 0} A(n)q^n = (-q;q^2)_{\infty}\,\cdot
\end{equation}
\end{cor}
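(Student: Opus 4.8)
The plan is to realize Corollary~\ref{cor:silcomp} as the specialization of \Thm{theo:flatreg2} to the set of colors, ground, and energies attached to the square (as a tensor product) of the vector representation of $A_{2n}^{(2)}$ for the rank relevant to Siladi\'c's identity, and then to translate the abstract conditions on colored partitions into the explicit integer conditions defining $A(n)$ and $B(n)$. Recall that \Thm{theo:flatreg2} produces, for suitable degree-two energies $\ep$ and $\ep'$ that agree up to finitely many exceptions, a bijection between a prescribed set of flat partitions with ground $\co$ and energy $\ep$ and a prescribed set of regular partitions in $\co$ with energy $\ep'$. The whole proof consists of identifying the two sides of this bijection with the partitions counted by $B(n)$ and $A(n)$ respectively, and then reading off the generating function.

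First I would fix the data: choose $\C$ to be the primary and secondary colors arising from paper one (\cite{IK201}) for Siladi\'c's identity, set the ground $\co$, and take $\ep'$ to be the minimal difference energy whose relation $\odp$ reproduces the Siladi\'c-type difference conditions. By the main result of \cite{IK201}, summarized in \Sct{sect:paper1}, the regular partitions in $\co$ with energy $\ep'$ are, after the dictionary sending each color to its size residue modulo~$16$, exactly the Siladi\'c partitions. The task here is to check that this dictionary converts the colored minimal difference condition into the stated conditions on $A(n)$: parts different from $2$, differences $\ge 5$, and the four congruence families for differences $5,6,7,8$. Each family is obtained by listing, for a given size-difference, which ordered color pairs $(c,d)$ realize $k-l=\ep'(c,d)$, and computing $\lambda_i+\lambda_{i+1}=k+l \bmod 16$ for those pairs.

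Next I would treat the flat side. Here $\ep$ is the energy function of $B\otimes B$, with $B$ the level-one vector representation of $A_{2n}^{(2)}$; its values lie in $\{0,\ldots,16\}$, which forces $0\le \lambda_i-\lambda_{i+1}\le 16$. The flatness relation $\gte$ determines each part from its color and the color of its successor via the telescoping identity $|\pi_k|=\sum_{l=k}^{s-1}\ep(c(\pi_l),c(\pi_{l+1}))$, so the colored constraints become the difference-dependent congruence conditions of $B(n)$. The secondary colors that share a size class are recorded by overlining: the parts divisible by $8$ that admit an overline correspond to the secondary colors with the same size residue, and the phrase describing the number of overlined parts among $\lambda_i,\lambda_{i+1}$ is precisely the bookkeeping that separates these colors inside $\lambda_i+\lambda_{i+1} \bmod 16$. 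I would also check that the ground conditions $\pi_s=0_{\co}$ and $\pi_{s-1}\ne 0_{\co}$ translate into the requirement that the last part be less than $11$ and different from $2$ and $\overline 8$.

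With both identifications in place, \Thm{theo:flatreg2} yields the bijection and hence $A(n)=B(n)$ for every $n$, while the generating function $(-q;q^2)_\infty$ follows because the regular side enumerates Siladi\'c partitions, whose generating function is that of partitions into distinct odd parts. I expect the main obstacle to be the bookkeeping in the two translation steps: matching the finitely many exceptional pairs where $\ep\ne\ep'$ against the asymmetric congruence lists (the $\pm$ residues and the overline multiplicities), and confirming that these exceptions are exactly what produces the ``different from $2$'' exclusions and the special last-part condition.
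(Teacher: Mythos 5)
Your overall strategy is the one the paper uses: specialize \Thm{theo:flatreg2} to the weighted-words data behind Siladi\'c's identity, identify the regular side with $A(n)$ and the flat side with $B(n)$ through a mod-$16$ dictionary, and read off $(-q;q^2)_\infty$ from paper one. However, there are two concrete gaps in your translation step, and one of them is precisely the idea that makes the corollary work. First, \Thm{theo:flatreg2} does not accept an arbitrary pair of ``almost equal'' energies: it pairs the two degree-two energies $\ep_2$ and $\ep'_2$ that are \emph{both} manufactured, via \Def{def:flat2} and the definition following it, from a single primary minimal energy $\ep$ satisfying the hypotheses of \Thm{theo:flatreg1}. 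So before anything can be translated you must fix the degree-one data, which the paper does explicitly: $\C=\{a,b,c\}$, ground $\co=c$, and the $3\times 3$ matrix $M_\ep$ with rows $(1,1,1)$, $(0,1,1)$, $(0,0,0)$; the $9\times 9$ matrices $M_{\ep_2}$ and $M_{\ep'_2}$ are then computed, and the regular side automatically excludes the ground color $c^2$, leaving the eight colors $a^2,ab,ac,ba,b^2,bc,ca,cb$. Your plan instead takes the flat energy to be the crystal energy function of $\B\ot\B$ and the regular energy to be ``the Siladi\'c minimal difference energy'' as two independent inputs; as stated, the hypotheses of the theorem are not verified for that pair, and the crystal machinery (which the paper only needs for the character formulas, not for this corollary) is a detour.

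Second, your quantitative claim that the energy function of $\B\ot\B$ has values in $\{0,\ldots,16\}$, ``which forces $0\leq \la_i-\la_{i+1}\leq 16$,'' is false: that energy function takes values in $\{0,1,2\}$, and $\ep_2$ takes values in $\{0,\ldots,4\}$. The range $0$ to $16$, the mod-$16$ congruences, the overlines, and the last-part conditions all arise only after the dilation $(q,a,b,c)\mapsto(q^4,q^{-3},q^{-1},1)$, which turns a secondary part of weighted size $m$ and color $cc'$ into the integer $4m+w(c)+w(c')$, where $w(a)=-3$, $w(b)=-1$, $w(c)=0$. This dilation is the missing ``dictionary'': under it the two colors $ab$ and $c^2$ are exactly the ones landing on sizes $\equiv 0 \bmod 8$ (whence the optional overline, which marks the ground-colored parts $c^2$ --- present in flat partitions, forbidden in regular ones, which is why $A(n)$ carries no overlines); the admissible part sizes on the regular side become all integers different from $2$; and the admissible last-part sizes on the flat side, namely the dilations of the column values $3,3,1,2,3,1,2,2$ of $M_{\ep_2}$, become $\{1,3,4,5,6,7,8,10\}$, i.e.\ ``less than $11$, different from $2$ and $\overline{8}$.'' Without specifying this dilation, the phrase ``sending each color to its size residue modulo $16$'' is not defined and none of the congruence lists in $A(n)$ and $B(n)$ can be checked. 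Once you fix the degree-one matrix and the dilation and redo your two verification steps with them, your argument coincides with the paper's.
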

\subsubsection{Character formulas for level one standard modules}
We refer the reader to \cite{HK02} for the definitions in the theory of Kac-Moody algebras.
\m
Let $n$ be a non-negative integer, and consider the Cartan datum $(A,\Pi,\Pi^\vee,P,P^\vee)$ for a generalised Cartan matrix $A$ of affine type and rank $n$.
Here $\Pi$ is the set of the simple roots $\alpha_i (i \in \{0, \dots , n\})$, and 
denote by $\bar P = \Z\Lambda_0 \oplus \cdots \oplus \Z\Lambda_{n}$ the lattice of the classical weights, where the elements $\Lambda_{\ell}$ $(\ell \in \{0, \dots , n\})$ are the 
fundamental weights. Denote by $\delta$ the null root. $L(\Lambda)$ denote the irreducible module of highest weight $\Lambda$, also called the standard module. Using \Thm{theo:flatreg1} and \cite[Theorem 3.8]{DK2}, we retrieve the Frenkel-Kac character formulas \cite{FK80} for the following. 
\begin{theo}\label{theo:char1}
Let $n\geq 2$, and let $\Lambda_0, \dots, \Lambda_{n}$ be the fundamental weights  and let $\alpha_0, \dots, \alpha_{n}$ be the simple roots of $A_{2n}^{(2)}$.
We have in $\Z[[e^{-\alpha_0},e^{-\alpha_1},\cdots,e^{-\alpha_{n}}]]$ that
\begin{equation}
e^{-\Ll_0}\mathrm{ch}(L(\Ll_0)) = \prod_{u=1}^n (-e^{-\delta'-\frac{1}{2}\alpha_n-\sum_{i=u}^{n-1}\alpha_i},-e^{-\delta'+\frac{1}{2}\alpha_n+\sum_{i=u}^{n-1}\alpha_i};e^{-2\delta'})_\infty\,,
\end{equation}
where $2\delta'= \delta= 2\alpha_0 + \cdots + 2\alpha_{n-1}+\alpha_n$ is the null root.
\end{theo}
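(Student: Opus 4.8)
The plan is to run in sequence the two tools named in the statement: first express the normalized character as a generating function of flat partitions via \cite[Theorem 3.8]{DK2}, then pass to regular partitions through \Thm{theo:flatreg1}, and finally evaluate the resulting regular generating function as the stated product.

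First I would fix the combinatorial data attached to the level-one perfect crystal $B$ of $A_{2n}^{(2)}$: its $2n+1$ elements furnish the set of primary colors $\C$, the classical weight of each element is the color weight carried by $\wt$, the ground $\co$ is the color corresponding to the ground-state path of highest weight $\Ll_0$, and the suitably normalized energy function $H$ of $B$ is taken as the energy $\ep$, so that the flatness relation $\gte$ records exactly the local energy $H$ between consecutive parts of a path. With this dictionary \cite[Theorem 3.8]{DK2} gives
\[
e^{-\Ll_0}\,\mathrm{ch}(L(\Ll_0)) \;=\; \sum_{\pi\in\Flt} e^{\wt(\pi)}\,,
\]
the weight $\wt(\pi)$ being determined by the multiset of non-ground colors of $\pi$ together with its size $|\pi|$, which fixes the $\delta$-grading (the ground color $\co$ carrying only this grading).

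Next I would apply \Thm{theo:flatreg1}. As all colors here are primary, that theorem is available with $\ep'=\ep$ and produces a bijection between $\Flt$ and the $\co$-regular partitions $\Reg$ preserving the size and the non-ground color content. Since $\wt$ is a function of precisely these two data, the bijection is weight preserving, whence
\[
\sum_{\pi\in\Flt} e^{\wt(\pi)} \;=\; \sum_{\pi\in\Reg} e^{\wt(\pi)}\,.
\]
It then remains to evaluate the right-hand side from the minimal difference condition $\odp$ attached to $\ep=H$. The structural claim to establish is that, for this energy, two parts of distinct non-ground colors are subject to no genuine gap while two parts of one color are forced to grow strictly; consequently the $2n$ non-ground colors split into $n$ conjugate pairs indexed by $u=1,\dots,n$, and a regular partition is nothing but an independent choice, for each color, of a strictly decreasing (hence ``distinct-part'') sequence. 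Reading off the weight of a single part of each color and of the elementary size increment as monomials in $e^{-\alpha_i}$ and $e^{-2\delta'}=e^{-\delta}$, each pair contributes
\[
(-e^{-\delta'-\tfrac{1}{2}\alpha_n-\sum_{i=u}^{n-1}\alpha_i},\,-e^{-\delta'+\tfrac{1}{2}\alpha_n+\sum_{i=u}^{n-1}\alpha_i};\,e^{-2\delta'})_\infty\,,
\]
and the product over $u=1,\dots,n$ is exactly the Frenkel--Kac formula.

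The main obstacle is this final evaluation. One must compute the energy $H$ of the level-one $A_{2n}^{(2)}$ crystal on every pair $b\otimes b'$ and check that the induced minimal differences $\ep(c,d)$ genuinely decouple the regular partitions into $2n$ mutually unconstrained distinct-part sequences carrying the right charges; establishing this absence of residual cross-color interaction, equivalently that the $\odp$-ordering of a fixed per-color content is always forced and realizable, is where the specific combinatorics of the crystal is needed. The remaining identification of $\wt$ with the exponents of $\alpha_0,\dots,\alpha_n$ and $\delta$ is a routine but delicate bookkeeping that must be carried out precisely to land on the stated monomials.
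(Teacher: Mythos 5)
Your overall architecture --- express $e^{-\Ll_0}\mathrm{ch}(L(\Ll_0))$ as a flat-partition generating function via \cite[Theorem 3.8]{DK2}, pass to regular partitions via \Thm{theo:flatreg1}, then evaluate a product --- is the same as the paper's, but your second step has a genuine gap: \Thm{theo:flatreg1} cannot be applied with the crystal energy taken as $\ep$. That theorem is stated for a \emph{minimal} energy, i.e.\ one with values in $\{0,1\}$, satisfying moreover $\ep(\co,\co)=0$ and $\delta_g=\ep(\co,c)=1-\ep(c,\co)$ for all $c\neq\co$ (condition \eqref{eq:pos0}). For the vector crystal of $A_{2n}^{(2)}$ the energy matrix $\ep'(c_u,c_v)=H(v\ot u)$ has entries equal to $2$ (for instance $\ep'(c_1,c_1)=2$) and satisfies $\ep'(\co,c)=\ep'(c,\co)=1$, so both hypotheses fail. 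This is not a technicality; the conclusion itself is false for $\ep'$: for the color sequence reduced to the single letter $c_1$, the unique $\ep'$-flat partition is $(1_{c_1},0_{c_0})$, of size $1$, whereas $(k_{c_1},0_{c_0})$ is $\ep'$-regular for every $k\geq \ep'(c_1,c_0)=1$, so no size- and color-preserving bijection can exist. Your final ``decoupled'' evaluation is wrong for the same reason: a product with base $e^{-2\delta'}$, i.e.\ $q^2$, encodes a parity restriction on part sizes within each color, which $\ep'$-flat partitions do satisfy (every non-ground part has odd size) but $\ep'$-regular partitions do not (e.g.\ $2_{c_1}$ is allowed).

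The missing idea, and the way the paper proceeds, is a reduction of $\ep'$ to a genuine minimal energy by a dilation of the weighted words. One introduces the minimal energy $\ep$ whose matrix on the $2n$ non-ground colors has $1$ on and above the diagonal and $0$ below (in a suitable ordering), with $\ep(c,\co)=1$ and $\ep(\co,c)=\ep(\co,\co)=0$; this $\ep$ satisfies the hypotheses of \Thm{theo:flatreg1} with $\delta_g=0$, and the substitution \eqref{eq:transfo1}, sending $k_c\mapsto(2k-1)_c$ for $c\neq c_0$ and $k_{c_0}\mapsto(2k)_{c_0}$, carries $\ep$-flat partitions exactly onto $\ep'$-flat partitions. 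At the level of $\ep$ the theorem applies, and the $\ep$-regular partitions are precisely the finite subsequences, ending in $0_{c_0}$, of the single chain $\cdots\od 2_{c_1}\od 1_{c_{\overline{1}}}\od\cdots\od 1_{c_n}\od\cdots\od 1_{c_1}\od 0_{c_0}$, whence the generating function $(-c_1q,-c_{\overline{1}}q,\ldots,-c_nq,-c_{\overline{n}}q;q)_\infty$; this is where the ``distinct parts'' structure actually lives, not at the level of $\ep'$. Transporting through the dilation gives $(-c_1q,-c_{\overline{1}}q,\ldots,-c_nq,-c_{\overline{n}}q;q^2)_\infty$ for the $\ep'$-flat partitions, and only then does \Thm{theo:formchar}, with $q=e^{-\delta/2}=e^{-\delta'}$ and $c_u=e^{\wt u}$, $c_{\overline{u}}=e^{-\wt u}$ from \eqref{eq:weightA}, yield the stated formula. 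Without this reduction, both your application of \Thm{theo:flatreg1} and the ensuing product evaluation break down.
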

\begin{theo}\label{theo:char2}
Let $n\geq 2$, and let $\Lambda_0, \dots, \Lambda_{n}$ be the fundamental weights  and let $\alpha_0, \dots, \alpha_{n}$ be the simple roots of $D_{n+1}^{(2)}$.
We have in $\Z[[e^{-\alpha_0},e^{-\alpha_1},\cdots,e^{-\alpha_{n}}]]$ that
\begin{align}
e^{-\Ll_0}\mathrm{ch}(L(\Ll_0)) &= \frac{1}{(e^{-\delta};e^{-2\delta})_{\infty}}\cdot\prod_{u=1}^n (-e^{-\delta-\sum_{i=u}^{n}\alpha_i},-e^{-\delta+\sum_{i=u}^{n}\alpha_i};e^{-2\delta})_\infty\,,
\\e^{-\Ll_n}\mathrm{ch}(L(\Ll_n)) &= \frac{1}{(e^{-\delta};e^{-2\delta})_{\infty}}\cdot\prod_{u=1}^n (-e^{-\sum_{i=u}^{n}\alpha_i},-e^{-2\delta+\sum_{i=u}^{n}\alpha_i};e^{-2\delta})_\infty\,,
\end{align}
where $\delta= \alpha_0+\cdots+\alpha_n$ is the null root.
\end{theo}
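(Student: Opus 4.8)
The plan is to realize both characters as generating functions of flat partitions attached to a level one perfect crystal of $D_{n+1}^{(2)}$, and then to transport these generating functions to regular partitions via \Thm{theo:flatreg1}, where the minimal difference conditions decouple into an infinite product. Concretely, I would first invoke \cite[Theorem 3.8]{DK2}, which expresses $e^{-\Ll}\mathrm{ch}(L(\Ll))$, for a level one dominant weight $\Ll$, as the generating function
\[\sum_{\pi \in \Pp^{\gte}_{\co}} e^{\wt(\pi)}\]
of the flat partitions with an appropriate ground $\co$ and energy $\ep$ read off from the combinatorial energy function of the perfect crystal. The two formulas correspond to the two level one dominant weights $\Ll_0$ and $\Ll_n$, hence to the two admissible choices of ground state in the crystal; the colors of the parts are indexed by the crystal elements, whose classical weights supply the $\pm\sum_{i=u}^n\alpha_i$ appearing in the statement, while the null root $\delta$ records the grading.

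Next, since we are at degree one (primary colors and parts), \Thm{theo:flatreg1} applies with $\ep'=\ep$ and yields a bijection between these flat partitions and the regular partitions in ground $\co$ with energy $\ep$, i.e. with the relation $\odp$. This bijection preserves the size and the number of occurrences of each non-ground color, so it is weight-preserving and turns the character into the generating function of the corresponding regular partitions. I would run this step separately for the ground $\co$ associated with $\Ll_0$ and for the one associated with $\Ll_n$, the two choices producing the two different shifts (namely $-\delta\pm\sum_{i=u}^n\alpha_i$ versus $-\sum_{i=u}^n\alpha_i$ and $-2\delta+\sum_{i=u}^n\alpha_i$) in the products.

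The final step is to compute the generating function of the regular partitions and to check that it factors as the stated product. Here one uses that the energy $\ep$ of the $D_{n+1}^{(2)}$ crystal makes the cross-color minimal differences permissive enough that parts of distinct colors may be freely interleaved, while parts sharing a color obey a fixed arithmetic gap governed by $2\delta$. Each of the $2n$ distinct-part colors then contributes a factor $(-e^{-\delta\pm\sum_{i=u}^n\alpha_i};e^{-2\delta})_\infty$, and the single repeatable color attached to the ground contributes the bosonic factor $1/(e^{-\delta};e^{-2\delta})_\infty$; multiplying these over $u$ reproduces both right-hand sides, the two grounds accounting for the two displayed shifts.

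The main obstacle I anticipate is the bookkeeping of the perfect crystal itself: one must pin down the precise energy function $\ep$ (equivalently the combinatorial $R$-matrix statistic) of the level one crystal of $D_{n+1}^{(2)}$, verify that it meets the hypotheses of \Thm{theo:flatreg1}, and translate the crystal weights into the exponents $-\delta\pm\sum_{i=u}^n\alpha_i$. Once $\ep$ is correct, the decoupling of the regular-partition generating function into the product is a direct, if slightly delicate, combinatorial computation; the genuinely subtle point is confirming that the cross-color gaps in $\ep$ are exactly those that let the colors separate with no residual interaction, so that the extra bosonic factor $1/(e^{-\delta};e^{-2\delta})_\infty$ absent from the $A_{2n}^{(2)}$ case of \Thm{theo:char1} appears precisely once and no cross terms survive beyond the stated product.
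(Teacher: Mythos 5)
Your skeleton is exactly the paper's: realize both characters via \Thm{theo:formchar} (i.e.\ \cite[Theorem 3.8]{DK2}) as generating functions of flat partitions for the vector crystal of $D_{n+1}^{(2)}$, with the two grounds $c_0$ and $c_{\overline{0}}$ corresponding to $\Lambda_0$ and $\Lambda_n$, then pass to regular partitions by \Thm{theo:flatreg1} and sum them as a product. But there is a genuine gap at your second step. \Thm{theo:flatreg1} is stated only for a \emph{minimal} energy in the sense of \Def{def:minerg}, i.e.\ with values in $\{0,1\}$ and satisfying \eqref{eq:pos0}, whereas the energy read off from the crystal, $\ep'(c_u,c_v)=H(v\ot u)$, is the matrix \eqref{eq:energyD}: it has many entries equal to $2$ (e.g.\ $\ep'(c_1,c_1)=2$), and moreover $\ep'(c_0,c)=\ep'(c,c_0)=1$ for all $c\neq c_0$, which is incompatible with \eqref{eq:pos0} for either value of $\delta_g$. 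So the assertion that ``\Thm{theo:flatreg1} applies with $\ep'=\ep$'' fails as written; the crystal energy itself never satisfies the hypotheses of the theorem, and no amount of checking will make it do so.

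The missing idea is the paper's reduction of the crystal energy to a minimal one by a change of variables. For the ground $c_0$ the paper exhibits a $\{0,1\}$-valued energy $\ep$ and the dilation \eqref{eq:transfo2} (under which a part $k_c$ with $c\neq c_0$ becomes $(2k-1)_c$ and a part $k_{c_0}$ becomes $(2k)_{c_0}$) carrying $\ep$-grounded partitions to $\ep'$-grounded partitions; for the ground $c_{\overline{0}}$ a \emph{different} minimal matrix and the transformation \eqref{eq:transfo3} are needed. \Thm{theo:flatreg1} is then applied at the minimal level, where the regular partitions are precisely the finite sub-chains of one explicit infinite $\od$-chain whose unique repeatable color is $c_{\overline{0}}$ (for $\Lambda_0$) resp.\ $c_0$ (for $\Lambda_n$); note that this repeatable color is the zero-weight crystal vertex \emph{other than} the ground, not ``attached to the ground'' as you put it, and it is what produces the bosonic factor once one sets $c_{\overline{0}}=1$ resp.\ $c_0=1$. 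This yields $(-c_1q,-c_{\overline{1}}q,\ldots,-c_nq,-c_{\overline{n}}q;q)_\infty/(c_{\overline{0}}q;q)_\infty$ (and its analogue for the other ground) at modulus one, and only after pushing this through the dilation do the modulus-$e^{-2\delta}$ products of the statement appear, with \eqref{eq:weightD} converting the colors into the exponents $\pm\sum_{i=u}^{n}\alpha_i$. With this intermediate reduction inserted, your outline coincides with the paper's proof; without it, the key step is unjustified.
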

\begin{theo}\label{theo:char3}
Let $n\geq 3$, and let $\Lambda_0, \dots, \Lambda_{n}$ be the fundamental weights  and let $\alpha_0, \dots, \alpha_{n}$ be the simple roots of $B_{n}^{(1)}$.
We have in $\Z[[e^{-\alpha_0},e^{-\alpha_1},\cdots,e^{-\alpha_{n}}]]$ that
\begin{equation}
e^{-\Ll_n}\mathrm{ch}(L(\Ll_n)) = \frac{1}{(e^{-\delta};e^{-2\delta})_{\infty}}\cdot\prod_{u=1}^n (-e^{-\sum_{i=u}^{n}\alpha_i},-e^{-\delta+\sum_{i=u}^{n}\alpha_i};e^{-\delta})_\infty\,,
\end{equation}
where $\delta= \alpha_0+\alpha_1+2\alpha_2\cdots+2\alpha_n$ is the null root.
\end{theo}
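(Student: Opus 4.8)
The plan is to reproduce, for type $B_n^{(1)}$, the three-step argument already used to establish \Thm{theo:char1} and \Thm{theo:char2}. First I would apply \cite[Theorem 3.8]{DK2} to express the normalized character $e^{-\Ll_n}\mathrm{ch}(L(\Ll_n))$ as the generating function of the flat partitions with ground $\co$ and energy $\ep$ attached to a level one perfect crystal $B$ realizing $L(\Ll_n)$. I take the elements of $B$ as the primary colors: the ground color $\co$ is the ground-state element, the weight-bearing elements carry the classical weights $\pm(\alpha_u+\cdots+\alpha_n)$ for $u=1,\ldots,n$, and there is a distinguished zero-weight element. Encoding each color by the monomial in the $e^{-\alpha_i}$ fixed by its classical weight and letting the size of a part record the $\delta$-grading turns this generating function into an element of $\Z[[e^{-\alpha_0},\ldots,e^{-\alpha_n}]]$.

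Since we work at degree one, the energy $\ep$ returned by the crystal is an honest map $\C^2\to\Z$, so \Thm{theo:flatreg1} applies with $\ep'=\ep$ and yields a bijection sending these flat partitions to the regular partitions in ground $\co$ for the minimal difference condition $\odp$. I would then evaluate the generating function of the latter as an infinite product, color by color. Under $\odp$ the parts of each weight-bearing color obey a distinctness-type condition, so these colors contribute the fermionic factors $(-e^{-\sum_{i=u}^n\alpha_i},-e^{-\delta+\sum_{i=u}^n\alpha_i};e^{-\delta})_\infty$; the zero-weight color, which may be repeated, contributes the bosonic factor $1/(e^{-\delta};e^{-2\delta})_\infty$. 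Collecting these recovers exactly the Frenkel-Kac product of \cite{FK80}.

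The hard part will be the algebra-specific bookkeeping: determining the energy function $\ep$ of the chosen $B_n^{(1)}$ crystal, identifying which element is the ground $\co$, and matching each color to its factor so that the residues forced by $\ep$ reproduce the exponents $-\sum_{i=u}^n\alpha_i$ and $-\delta+\sum_{i=u}^n\alpha_i$. The genuinely new feature, compared with $D_{n+1}^{(2)}$ in \Thm{theo:char2} where every factor has base $e^{-2\delta}$, is the coexistence of two periods: the weight-bearing colors recur with period $\delta$ while the zero-weight color recurs only with period $2\delta$. This split reflects the short simple root $\alpha_n$ and the parity of the crystal energy on the zero-weight element, and verifying it is where the argument differs from the previous two cases; once the periods and residues are pinned down, the product evaluation and the comparison with \cite{FK80} are routine.
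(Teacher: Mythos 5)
Your pipeline (crystal $\to$ flat partitions via \cite[Theorem 3.8]{DK2} $\to$ \Thm{theo:flatreg1} $\to$ regular partitions $\to$ product evaluation) is indeed the paper's, but the mechanism you propose for the bosonic factor is wrong, and it is exactly the step you flag as the ``genuinely new feature''. In type $B_n^{(1)}$ the vector crystal has a \emph{unique} zero-weight element, namely $\overline{0}$, and it must be the ground: the ground state path of $\Ll_n$ is $(\cdots\overline{0}\,\overline{0}\,\overline{0}\,\overline{0})$, and the paper notes that $c_{\overline{0}}$ is the only admissible ground for \Thm{theo:formchar}. Hence $c_{\overline{0}}=\co$, and on the regular side the ground color never occurs except as the final part $0_{\co}$; a ``repeated zero-weight color'' contributes nothing. (Your picture is the one valid for $D_{n+1}^{(2)}$ with $\Ll_0$, where there are two zero-weight elements $0,\overline{0}$ and the non-ground one does repeat and produces the bosonic factor.) Here the factor $1/(e^{-\delta};e^{-2\delta})_{\infty}$ has a different origin: the relevant minimal energy satisfies $\ep(c_1,c_{\overline{1}})=\ep(c_{\overline{1}},c_1)=0$ while $\ep(c_1,c_1)=\ep(c_{\overline{1}},c_{\overline{1}})=1$, so, as in \Expl{ex:ordered2}, regular partitions may contain arbitrarily long \emph{alternating} blocks $\cdots\od k_{c_1}\od k_{c_{\overline{1}}}\od k_{c_1}\od\cdots$ of equal size $k$. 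For each $k$ these blocks are generated by $\frac{(1+c_1q^k)(1+c_{\overline{1}}q^k)}{1-c_1c_{\overline{1}}q^{2k}}$, and since $\wt(1)+\wt(\overline{1})=0$, i.e. $c_1c_{\overline{1}}=1$ after the substitution of \Thm{theo:formchar}, the denominators assemble into $1/(e^{-\delta};e^{-2\delta})_{\infty}$. A search for repetitions of the ground color, as in your plan, would find nothing to produce this factor; the two periods you observe come from the cancellation $c_1c_{\overline{1}}=1$ (a pair of equal parts contributes $q^{2k}$), not from any parity of the crystal energy on $\overline{0}$.

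There is a second, more technical gap: you cannot set $\ep'=\ep$ and apply \Thm{theo:flatreg1} directly to the energy read off the crystal. The function $\ep'(c_u,c_v)=H(v\ot u)$ is not a minimal energy: it takes the value $-1$ (namely $\ep'(c_{\overline{1}},c_1)=H(1\ot\overline{1})=-1$) and it violates the normalization \eqref{eq:pos0}, since $\ep'(\co,c)$ equals $1$ for $c=c_{\overline{u}}$ but $0$ for $c=c_u$. The paper first factors $\ep'$ as a genuine minimal energy $\ep$ (values in $\{0,1\}$, satisfying \eqref{eq:pos0} with $\delta_g=0$) composed with the change of variables \eqref{eq:transfo4}, $(q,c_{\overline{1}},\ldots,c_{\overline{n}})\mapsto(q,c_{\overline{1}}q^{-1},\ldots,c_{\overline{n}}q^{-1})$, applies \Thm{theo:flatreg1} at the level of $\ep$, computes the product there, and only then transports the result back through the transformation. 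Without this factorization the hypotheses of \Thm{theo:flatreg1} are simply not met, so the bijection you invoke is not available for the crystal energy itself.
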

These product formulas of the characters can also be found in the Wakimoto's book \cite{Wak01}
\bi
The remainder of the paper is organized as follows. We first provide in \Sct{sect:setup} the tools and the main result of the first paper \cite{IK201}, as well as the main results connecting flat and regular partitions at degree one and two. Second, assuming  \Thm{theo:flatreg1}, we recover in \Sct{sect:level1} the Frenkel-Kac character formulas.
After that, we prove \Thm{theo:flatreg1} and \Thm{theo:flatreg2} respectively in \Sct{sect:deg1} and \Sct{sect:deg2}.
We finally discuss in \Sct{sect:degk} the possibility of a suitable \Thm{theo:flatreg} at degree $k$ for $k\geq 3$ and conclude with some remarks in \Sct{sect:remarks}.
\section{The setup}\label{sect:setup}
\subsection{Weighted words: parts as energetic particles
}\label{sect:paper1}
We here recall the basic tools and results of the first paper \cite{IK201}.
\m Let $\C$ be a set of primary colors, countable or not. Recall the set of primary parts $\Z_\C$, which is also denoted by $\Pp=\Z \times \C$. A primary part with size $k$ and color $c$ is then identified as $k_c$ or $(k,c)$.
\begin{deff}\label{def:minerg}
A \textit{minimal energy} is an energy $\ep$  from $\C^2$ to $\{0,1\}$. When $\C=\{c_1,\ldots,c_n\}$, the data given by $\ep$ is equivalent to the matrix $M_\ep  = (\ep(c_i,c_j))_{i,j=1}^n$, denoted the \textit{energy matrix}.
The \textit{energy} relation $\od$ with respect to $\ep$ is the binary relation on $\Pp^2$ defined by the following, 
\begin{equation}\label{rel}
 (k,c)\od (k',c') \Longleftrightarrow \, k-k'\geq \ep(c,c')\,\cdot
\end{equation}
\end{deff}
\begin{ex}\label{ex:ordered1}
Let $\C=\{c_1,\ldots,c_n\}$ be a set of colors, and let $I_1,I_2$ be a set-partition of $\{1,\ldots,n\}$. We then define the minimal energy
\begin{equation}
\ep(c_i,c_j) = 
\begin{cases}
\chi(i<j) \ \ \text{if}\ \ i\neq j \\
\chi(i\in I_1)\ \ \text{if}\ \ i=j
\end{cases}\,\cdot
\end{equation}
Then, the general order on the parts is
$$\cdots\od (k+1)_{c_1}\od k_{c_n}\od k_{c_{n-1}}\od\cdots \od k_{c_2}\od k_{c_1}\od \cdots\,,$$
and $k_{c_i}\od k_{c_i}$ if $i\in I_2$ and $k_{c_i}\not\od k_{c_i}$ if $i \in I_1$. This means that a part  $k_{c_i}$ can be repeated  in the well-ordered sequence of parts if and only if $i\in I_2$.  
\end{ex}
\begin{ex}\label{ex:ordered2}
Suppose that $\C=\{c_1,c_2\}$ and define the minimal energy
\begin{equation}
\ep(c_i,c_j) = \chi(i=j)\,\cdot
\end{equation}
We then have the general relation on the parts,
\[\cdots\od (k+1)_{c_2}\od k_{c_2}\od k_{c_{1}}\od k_{c_2} \od k_{c_1} \od (k-1)_{c_1}\od \cdots\,,\]
and $k_{c_i}\not\od k_{c_i}$. A well-related sequence of parts with the same size is then an  alternating sequence.
\end{ex}
\begin{deff}\label{def:secpar}
We define the \textit{secondary parts} to be the sums of two consecutive primary parts in terms of $\od$. Denote by $\Sc = \Z\times \C^2$ the set of secondary parts, in such a way that the part
\begin{equation}
 (k,c,c') = (k+\ep(c,c'),c)+(k,c') 
\end{equation}
has size $2k+\ep(c,c')$ and color $cc'$. In the following, we identify  a secondary part as $(k,c,c')$ or $(2k+\ep(c,c'))_{cc'}$. We denote by $\gamma(k,c,c')$ and $\mu(k,c,c')$ the primary parts
\[\gamma(k,c,c')=(k+\ep(c,c'),c)\quad \text{and}\quad \mu(k,c,c') = (k,c')\,,\]
respectively called \textit{upper} and \textit{lower} halves of the secondary parts $(k,c,c')$.
\end{deff}
\begin{deff}\label{def:relation2}
We define the relation $\odp$ on $\Pp\sqcup \Sc$ as follows: 
 \begin{align}
(k,\textcolor{red}{c})\odp (k',\textcolor{foge}{c'}) &\Longleftrightarrow k-k' > \ep(\textcolor{red}{c},\textcolor{foge}{c'})\,,\label{pp}\\
(k,\textcolor{red}{c})\odp(k',\textcolor{foge}{c'},\textcolor{blue}{c''}) &\Longleftrightarrow k - (2k'+\ep(\textcolor{foge}{c'},\textcolor{blue}{c''}))\geq \ep(\textcolor{red}{c},\textcolor{foge}{c'})+\ep(\textcolor{foge}{c'},\textcolor{blue}{c''})\,,\label{ps}\\
(k,\textcolor{red}{c},\textcolor{foge}{c'})\odp (k',\textcolor{blue}{c''})&\Longleftrightarrow (2k+\ep(\textcolor{red}{c},\textcolor{foge}{c'}))-k' > \ep(\textcolor{red}{c},\textcolor{foge}{c'})+\ep(\textcolor{foge}{c'},\textcolor{blue}{c''})\,,\label{sp}\\
(k,\textcolor{red}{c},\textcolor{foge}{c'})\odp (k',\textcolor{blue}{c''},\textcolor{purple}{c'''}) &\Longleftrightarrow k-k' \geq \ep(\textcolor{foge}{c'},\textcolor{blue}{c''})+\ep(\textcolor{blue}{c''},\textcolor{purple}{c'''})\,\cdot\label{ss}
\end{align}
\end{deff} 
\begin{deff}\label{def:rho}
Let $\Odd$ (respectively $\E$) be the set of all generalized colored partitions with parts in $\Pp$ (resp. $\Pp\sqcup\Sc$)
and with relation $\od$ (resp. $\odp$). 
\bi
For $\rho\in\{0,1\}$, we consider the following sets:
\begin{itemize}
 \item $\Pp^{\rho_+} =\Z_{\geq \rho}\times\C \text{  and  } \Sc^{\rho_+} = \Z_{\geq \rho}\times\C^2 =\{(k,c,c')\in \Sc:\,k\geq \rho\}$,
 \item $\Pp^{\rho_-} =\Z_{\leq \rho}\times\C \text{  and  } \Sc^{\rho_-} =\{(k,c,c')\in \Sc:\,k+\ep(c,c')\leq \rho\}$.
\end{itemize}
We then denote by $\Odd^{\rho_+}$ (respectively $\Odd^{\rho_-}$) the subset of $\Odd$ with parts in $\Pp^{\rho_+}$ (respectively $\Pp^{\rho_-}$), and by 
$\E^{\rho_+}$ (respectively $\E^{\rho_-}$) the subset of $\E$ with parts in $\Pp^{\rho_+}\sqcup \Sc^{\rho_+}$ (respectively $\Pp^{\rho_-}\sqcup \Sc^{\rho_-}$).
\end{deff}
\bi 
Since secondary colors are products of two primary colors, the color sequence of partitions in $\Odd$ and $\E$ is then seen as a finite \textit{non-commutative} product of colors in $\C$.
\bi 
The main theorem of \cite{IK201} is then stated as follows.
\begin{theo}\label{theo:degree2}
 For any integer $n$ and any finite non-commutative product  $C$ of colors in $\C$,
 there exists a bijection between $\{\la \in \Odd: (C(\la),|\la|)=(C,n)\}$ and $\{\nu \in \E: (C(\nu),|\nu|)=(C,n)\}$. 
 In particular, for $\rho \in \{0,1\}$, we have the identities
 \begin{align}
 |\{\nu \in \E^{\rho_+}: (C(\nu),|\nu|)=(C,n)\}| &= |\{\la \in \Odd^{\rho_+}: (C(\la),|\la|)=(C,n)\}|\,,\\
 |\{\nu \in \E^{\rho_-}: (C(\nu),|\nu|)=(C,n)\}| &= |\{\la \in \Odd^{\rho_-}: (C(\la),|\la|)=(C,n)\}|\,\cdot
 \end{align}
\end{theo}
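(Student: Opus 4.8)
The plan is to construct the bijection explicitly by an iterative (insertion-type) procedure and to establish its correctness through a finite case analysis on the four clauses of Definition \ref{def:relation2}. Since both $C(\cdot)$ and $|\cdot|$ are to be preserved, I would first fix a non-commutative color sequence $C=d_1\cdots d_L$ and reduce the statement to this sequence: an element $\nu\in\E$ with $C(\nu)=C$ is the data of a \emph{parenthesization} of $C$ into singletons (primary parts) and adjacent pairs $d_id_{i+1}$ (secondary parts) together with a size assignment compatible with $\odp$, whereas an element $\la\in\Odd$ with $C(\la)=C$ is merely a size assignment to the $L$ slots compatible with $\od$. Thus for each $C$ one wants a size-preserving bijection between $\od$-chains and pairs (parenthesization, $\odp$-chain). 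The conceptual engine is that $\od$ allows two adjacent primary parts at \emph{minimal} difference, $k-k'=\ep(c,c')$, while the primary--primary clause \eqref{pp} of $\odp$ is \emph{strict}; a secondary part $(k,c,c')$ is exactly a bound pair $\gamma(k,c,c')\od\mu(k,c,c')$ sitting at equality, so the role of secondary parts is to absorb the minimal-difference adjacencies of $\la$.

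Concretely I would build $\Phi\colon\Odd\to\E$ by inserting the parts of $\la$ one at a time (inducting on the number of parts), binding a freshly inserted part to the current bottom part into a secondary whenever the two fall at minimal difference and leaving it primary otherwise, and I would define the inverse $\Psi$ as the matching extraction. At each step one must check that every adjacency created again satisfies the relevant clause among \eqref{pp}--\eqref{ss}; this is a finite verification over the four configurations (primary or secondary, inserted against primary or secondary below), using that $\ep$ is valued in $\{0,1\}$ by Definition \ref{def:minerg}, together with the bookkeeping that a secondary part has size $2k+\ep(c,c')=|\gamma(k,c,c')|+|\mu(k,c,c')|$ and color $cc'$, so that size and color are automatically preserved.

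The step I expect to be the main obstacle is that this transformation cannot be purely local. Reading the clauses through the halves, \eqref{ss} is precisely $\mu(k,c,c')\od\gamma(k',c'',c''')$, so two adjacent secondary parts split cleanly; but \eqref{ps} and \eqref{sp} are strictly stronger than the analogous half-conditions, since they involve the \emph{full} sizes $2k'+\ep(c',c'')$ and $2k+\ep(c,c')$ of the secondary parts rather than the sizes of their upper or lower halves. Hence a primary part adjacent to a secondary part constrains the entire secondary, naive halving already fails to land in $\Odd$ once sizes may be negative, and the true correspondence must \emph{redistribute} sizes along a maximal run of minimal-difference links (this is where the Sylvester-style mechanism of Stockhofe and of Keith--Xiong, \Thm{theo:kx}, reappears). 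The heart of the proof is therefore to show that an insertion triggers a controlled cascade of re-bindings along such a run that terminates, yields a genuine element of $\E$, and is undone exactly by $\Psi$; I would organize this as an induction treating each maximal minimal-difference run as a single block.

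Finally, the refinements follow by tracking the sign of part sizes through $\Phi$ and $\Psi$. Binding two primary parts with $k$-values $\geq\rho$ produces a secondary part with $k\geq\rho$, matching the definition of $\Sc^{\rho_+}$, and conversely extraction returns halves of size $\geq\rho$; dually, the bound $k+\ep(c,c')\leq\rho$ built into $\Sc^{\rho_-}$ is exactly what forces both halves to have size $\leq\rho$. Hence the bijection restricts to $\Odd^{\rho_+}\leftrightarrow\E^{\rho_+}$ and $\Odd^{\rho_-}\leftrightarrow\E^{\rho_-}$ for $\rho\in\{0,1\}$, yielding the two stated cardinality identities.
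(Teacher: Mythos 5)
You should know at the outset that the present paper contains no proof of \Thm{theo:degree2} to compare against: the theorem is recalled verbatim as the main result of the first paper of the series \cite{IK201}, and here it is only invoked as a black box (in \Sct{sect:deg2}, to obtain $|\Odd^{\rho_+}(C,n)|=|\E^{\rho_+}(C,n)|$). The benchmark is therefore the proof in \cite{IK201}, whose very title, ``Bressoud's algorithm as an energy transfer,'' indicates an insertion-and-redistribution mechanism of exactly the kind you outline. Your framing is sound and matches that spirit: fixing the non-commutative color sequence $C$, viewing $\nu\in\E$ as a parenthesization of $C$ plus a size assignment, observing that a secondary part $(k,c,c')$ is precisely a pair of consecutive primary parts sitting at minimal difference for $\od$ while the primary--primary clause \eqref{pp} of $\odp$ is strict, and noting that \eqref{ss} splits into the half-condition $\mu(k,c,c')\od\gamma(k',c'',c''')$ whereas \eqref{ps} and \eqref{sp} involve the full sizes and hence do not split. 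All of this is correct and is the right way to see why secondary parts must absorb the minimal-difference adjacencies of $\la$.

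The genuine gap is the one you name yourself and then defer: the ``controlled cascade of re-bindings'' \emph{is} the theorem, and your proposal never constructs it. You do not specify the redistribution rule (how much size is transferred, and to which part, when an inserted part collides with a secondary part through \eqref{ps} or \eqref{sp}), the order in which a maximal minimal-difference run is processed, why the cascade terminates, why the output satisfies all four clauses globally rather than only at the adjacencies just modified, or why the extraction map $\Psi$ inverts $\Phi$ --- note that ``the matching extraction'' is not canonical once sizes have been redistributed, so $\Psi$ requires its own definition and its own well-definedness argument. These points are precisely what occupies \cite{IK201}; everything preceding them in your proposal is bookkeeping. The gap also propagates to your last paragraph: the preservation of the restrictions $\rho_\pm$ is argued only for a direct binding of two primary parts into a secondary one with no size change, but since you have just argued that redistribution along runs is unavoidable, the invariance of the bounds $k\geq\rho$ (for $\Sc^{\rho_+}$) and $k+\ep(c,c')\leq\rho$ (for $\Sc^{\rho_-}$) under the cascade is an additional claim that needs proof. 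As it stands, the proposal is a correct roadmap with the central construction, and hence the proof, missing.
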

\bi 
Note that the partitions in $\Odd$ and $\E$ are not grounded partitions, but we will see in \Sct{sect:oeground} how to render them as regular partitions.
\subsection{Weighted words, flat partitions and regular partitions}
Let $\C$ be a set of primary colors, and let $\ep$ be  a \textbf{minimal energy} as defined in \Def{def:minerg}.
\subsubsection{Weighted words at degree one} Let us fix a ground $\co\in \C$. We set $\Flt$ to be the set of \textit{primary} flat partitions, which are the flat partitions with ground $\co$ and energy $\ep$. Recall that the energy  $\ep$ defines a relation $\gte$ as follows, 
\begin{equation}\label{eq:flatcond}
k_{c}\gte k'_{c'} \Longleftrightarrow k-k'= \ep(c,c')\cdot
\end{equation}
\m Let us also recall the energy relation $\od$ defined by  
\begin{equation}\label{eq:diffcond}
k_{c}\od k'_{c'} \Longleftrightarrow k-k'\geq \ep(c,c')\,,
\end{equation}
and let $\Reg$ be the set of \textit{primary} regular partitions, which are the regular partitions in ground $\co$ and with energy $\ep$. 
\m
Assuming that $\co = 1$, one can see, for both flat or regular partitions, the color sequence as a product of colors in $\C\setminus \{\co\}$. Let us set $\C'=\C\setminus \{\co\}$. Depending on certain properties of $\ep$,
we can build a bijection between $\Reg$ and $\Flt$ which preserves both the size and the color sequence of partitions.
\bi 
\begin{theo}[degree one]\label{theo:flatreg1} 
Let $\delta_g\in \{0,1\}$. 
Assume that $\ep(\co,\co)=0$, and that for all $c\neq 0$, 
 \begin{equation}\label{eq:pos0}
 \delta_g=\ep(\co,c)=1-\ep(c,\co)\,\cdot
 \end{equation}
 There then exists a bijection $\Omega$ between $\Flt$ and $\Reg$ which preserves the total energy and the sequence of colors different from $\co$.
\end{theo}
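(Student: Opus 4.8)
The plan is to build $\Omega$ one fibre at a time. Since $\Omega$ must preserve both the size and the word of non-ground colors, I fix a finite word $w=(d_0,\dots,d_{r-1})$ in $\C'=\C\setminus\{\co\}$ and show that the flat partitions in $\Flt$ and the regular partitions in $\Reg$ whose subsequence of non-$\co$ colors equals $w$ are equinumerous by size; the bijection $\Omega$ is then the disjoint union of these fibrewise bijections. Throughout I treat the case $\delta_g=1$ (so $\ep(\co,c)=1$ and $\ep(c,\co)=0$ for $c\ne\co$), the case $\delta_g=0$ being entirely analogous, the inserted ground blocks now attaching below the higher of their two non-ground neighbours.

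First I would pin down a common minimal element. For the word $w$ there is a unique regular partition $\sigma^{\min}$ all of whose successive differences are tight, with $\sigma^{\min}_k-\sigma^{\min}_{k+1}=\ep(d_k,d_{k+1})$ and $\sigma^{\min}_{r-1}=\ep(d_{r-1},\co)$; its size is $|\mu|=\sum_{k}\sum_{l\ge k}\ep(d_l,d_{l+1})$, with $d_r=\co$. Because the flatness relation $\gte$ forces exactly these differences, $\sigma^{\min}$ is simultaneously the flat partition with word $w$ carrying no ground parts. Every regular partition with word $w$ is obtained from $\sigma^{\min}$ by adding to the sizes a weakly decreasing tuple $e_0\ge\cdots\ge e_{r-1}\ge0$; this preserves all the conditions $\od$ and the ground, so the size generating function of the regular fibre is $q^{|\mu|}/(q;q)_r$.

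Next I would describe the flat fibre. Using $\ep(\co,\co)=0$, a block of consecutive ground parts is automatically flat and sits at one fixed size, and using $\ep(c,\co)=0$, $\ep(\co,c)=1$ such a block glues, between two non-ground parts, above the lower one. Hence every flat partition with word $w$ is specified by a tuple $\vec a=(a_0,\dots,a_{r-1})\in\Z_{\ge0}^r$, where $a_i$ is the number of ground parts inserted just above $d_i$; no grounds may sit immediately above the final $0_{\co}$, since they would themselves equal $0_{\co}$ and violate the grounded condition, which is precisely why there are $r$ slots and not $r+1$. A direct computation of the part sizes shows that this flat partition has size $|\mu|+g(\vec a)$, where $g(\vec a)=\sum_{i\in\mathrm{Active}}(A_i+i)$ with $A_i=a_0+\cdots+a_i$ and $\mathrm{Active}=\{i:a_i\ge1\}\cup\{i:\ep(d_{i-1},d_i)=1\}$; here $A_i+i$ is the number of parts lying above $d_i$, i.e. the depth of the unique energy-one transition attached to slot $i$.

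The enumerative heart, and the step I expect to be the main obstacle, is to prove $\sum_{\vec a\in\Z_{\ge0}^r}q^{g(\vec a)}=1/(q;q)_r$, so that the flat fibre too has generating function $q^{|\mu|}/(q;q)_r$ and the two fibres match. When all internal energies $\ep(d_{i-1},d_i)$ vanish this is transparent: writing the nonzero slots as $i_1<\cdots<i_p$ with multiplicities $b_1,\dots,b_p\ge1$, one finds $g=p^2+\sum_{k}(p-k+1)(b_k-1)+\sum_k\big(i_k-(k-1)\big)$, which is exactly the Durfee-square decomposition of a partition with at most $r$ parts, $p$ being the side of the square, the $b_k-1$ giving the arm and the positions $i_k$ giving the leg inside a $p\times(r-p)$ box; summing over $p$ yields $1/(q;q)_r$. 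The difficulty is the general word, where the forced-active slots with $\ep(d_{i-1},d_i)=1$ perturb this picture. I would resolve this either by showing that $\sum_{\vec a}q^{g(\vec a)}$ does not depend on the pattern of internal energies, comparing the two sums when a single $\ep(d_{i-1},d_i)$ is toggled, or by a refined Durfee-type decomposition; this verification is where the real work lies. Once the fibres are shown equinumerous for every $w$, choosing any size-preserving bijection on each and taking the union produces the required $\Omega$, which by construction preserves the total energy and the sequence of colors different from $\co$; the explicit Sylvester-style algorithm of Keith and Xiong realizes one such $\Omega$.
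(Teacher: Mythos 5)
Your reduction to fibres and your description of both fibres agree with the paper's own first step: the regular fibre is $\sigma^{\min}$ (the paper's $\mu$) plus an arbitrary partition into at most $r$ parts, hence has generating function $q^{|\mu|}/(q;q)_r$, and the flat fibre is indexed by the tuples $\vec a\in\Z_{\ge 0}^r$ of inserted ground blocks, with no block allowed above the final $0_{\co}$. But your size formula for the flat fibre is wrong, and the error propagates into the identity you set out to prove. Summing, over all energy-one transitions, the number of parts lying above that transition gives the \emph{total} size of the flat partition; since each active slot carries exactly one such transition, of depth $A_i+i$, your quantity $g(\vec a)=\sum_{i\in\mathrm{Active}}(A_i+i)$ is the size itself, not the size minus $|\mu|$. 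Indeed, at $\vec a=\vec 0$ the depths of the forced-active slots sum precisely to $|\mu|$, so writing the size as $|\mu|+g(\vec a)$ double-counts them. Consequently the identity you actually need is $\sum_{\vec a}q^{g(\vec a)}=q^{|\mu|}/(q;q)_r$, and the one you state, $\sum_{\vec a}q^{g(\vec a)}=1/(q;q)_r$, is false as soon as the word has an internal energy equal to $1$: for $\delta_g=1$, $w=(a,a)$ and $\ep(a,a)=1$, the flat partitions have sizes $1+2a_0+a_1$, so $\sum_{\vec a}q^{g(\vec a)}=q/\bigl((1-q)(1-q^2)\bigr)=q^{|\mu|}/(q;q)_2$. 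In particular your first proposed strategy --- showing that $\sum_{\vec a}q^{g(\vec a)}$ is independent of the pattern of internal energies --- cannot succeed, since the sum does depend on that pattern through the factor $q^{|\mu|}$.

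The deeper problem is that, even after this correction, the enumerative identity is exactly the hard part of the theorem, and you prove it only in the special case where all internal energies vanish (your Durfee-square argument there is correct). For a general word you offer only candidate strategies, one of which is doomed as explained above; this is a genuine gap at the heart of the proof, not a routine verification. It is precisely the point the paper resolves, and it does so by making your second suggestion (a ``refined Durfee-type decomposition'') explicit and bijective: keeping the same decomposition of the regular fibre as $\mu$ plus a partition $\nu$ with at most $r$ parts, the paper builds $\nu'$ from the flat partition by recording the sizes of the inserted ground parts, except that each occupied \emph{descent} slot ($\ep(d_{i-1},d_i)=0$, the paper's $D_W$) contributes one ``weighted'' part whose size is augmented by the slot index; conjugating $\nu'$ gives $\nu$, and $\Omega(\pi)=\mu+\nu$. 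The weighting absorbs exactly the interaction between inserted blocks and forced transitions that you identify as the obstacle, and the verification that this map is a size-preserving bijection (the paper's \eqref{eq:retour1}, \eqref{eq:recip}, and the matching of $D_W$ with $D_V$) is the real content of the proof. So your plan is salvageable --- the corrected identity is true, and the paper's conjugation proves it --- but as written your proposal both asserts a false identity and leaves its true version unproven.
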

\bi 
This theorem is a generalization of \Thm{theo:kx}. To see that \Thm{theo:flatreg1} implies \Thm{theo:kx}, we take the set $\C=\{c_0,\ldots,c_{m-1}\}$, and set $\co=c_0$.
\Thm{theo:kx} then corresponds to the energy $\ep(c_i,c_j)= \chi(i<j)$, 
followed by the transformation 
$$(q,c_0,c_1,\ldots,c_{m-1})\mapsto (q^m,1,q,\ldots,q^{m-1})\,\cdot$$
The latter operation means that the part is 
$k_{c_i}$ is transformed into the part $mk+i$, and the relations in \eqref{eq:flatcond} and  \eqref{eq:diffcond} induced by $\ep$ then become 
$$
mk+i \gte mk'+i' \Longleftrightarrow (mk+i) -(mk'+i') = \begin{cases}
i-i' \ \ \text{if} \ \ i\geq i'\\
m+i-i'\ \text{if} \ \ i< i'
\end{cases}\,,
$$
$$
mk+i \od mk'+i' \Longleftrightarrow (mk+i) -(mk'+i') \geq  \begin{cases}
i-i' \ \ \text{if} \ \ i\geq i'\\
m+i-i'\ \text{if} \ \ i< i'
\end{cases}\,\cdot
$$
Note that the last part corresponds to $0$ for both flat and regular partitions after this transformation. We then retrieve the flat partitions of \Expl{ex:mflat} and the regular partitions in \Expl{ex:mreg}, except that we implicitly assimilate the congruence modulo $m$ of the part size to the unique corresponding  color in $\C$.
\m Similarly, \Thm{theo:kx} is also implied by \Thm{theo:flatreg1} with the energy $\ep(c_i,c_j)= \chi(i>j)$
followed 
by the transformation 
$(q,c_0,c_1,\ldots,c_{m-1})\mapsto (q^m,1,q^{-1},\ldots,q^{1-m})$, in which case the part $k_{c_i}$ is assimilated to the part $km-i$.
\m In the same way, we obtain the analogue of Glaisher, stated in \Cor{cor:analogueglaisher}, by considering the same set of colors $\C=\{c_0,\ldots,c_{m-1}\}$, the ground $\co=c_0$, the transformation $(q,c_0,c_1,\ldots,c_{m-1})\mapsto (q^m,1,q,\ldots,q^{m-1})$, but a slightly different energy  $\ep$, given in \Expl{ex:ordered1} with $I_2=\{0\}$, 
\[
\ep(c_i,c_j)=\begin{cases}
\chi(i<j) \ \ \text{if} \ \ i\neq j\\
0\ \ \text{if} \ \ i= j=0\\
1\ \ \text{if} \ \ i= j\neq 0
\end{cases}\,\cdot
\]
Note that the restriction of $\ep$ to $\C\setminus \{c_0\}=\C'$ then gives $\ep(c_i,c_j)= \chi(i\leq j)$.
\subsubsection{Weighted words at degree two} Let us now assume that $\ep$ satisfies the conditions of \Thm{theo:flatreg1} and consider the set of secondary parts $\Sc$ defined in \Def{def:secpar}. Recall that $\delta_g$ is the common value of $\ep(\co,c)$ for all $c\in \C'$.
\begin{deff}\label{def:flat2}
We define $\Fltt$ to be the set of \textit{secondary} flat partitions, which are the flat partitions into secondary parts in $\Sc$, with  ground $\co^2$ and energy $\ep_2$ defined by 
\begin{equation}
\ep_2(cc',dd') = \ep(c,c')+2\ep(c',d)+\ep(d,d')\,\cdot
\end{equation}
\end{deff}
\begin{rem}
The definition of $\ep_2$ equivalent to defining a relation $\gtei$ on secondary parts which satisfies the following:
\begin{align}\label{eq:flatcond2}
(2k+\ep(c,c'))_{cc'}\gtei (2l+\ep(d,d'))_{dd'} &\Longleftrightarrow (2k+\ep(c,c'))-(2l+\ep(d,d'))= \ep(c,c')+2\ep(c',d)+\ep(d,d')\,\cdot\nonumber\\
&\Longleftrightarrow k-(l+\ep(d,d'))= \ep(c',d)\nonumber\\
&\Longleftrightarrow \mu((2k+\ep(c,c'))_{cc'})\gte \gamma((2l+\ep(d,d'))_{dd'})\,\cdot
\end{align}  
\end{rem}
\begin{deff}
We set $\Regg$ to be the set of \textit{secondary} regular partitions, which are the regular partitions into secondary parts in $\Sc$, with  ground $\co^2$ and the energy $\ep'$ defined by
\begin{equation}
\ep'_2(cc',dd') = \ep_2(cc',dd')+2\delta^\ep(cc',dd')\,,
\end{equation}
where $\delta^\ep(cc',dd')=0$ apart from
\begin{equation}\label{eq:always}
\delta^\ep(c\co,\co d')=\ep(c,d')\quad\text{for all}\quad c,d'\in \C'\,,
\end{equation}
and the additional exceptions when $\delta_g=1$:
\begin{align}
\delta^\ep(cc',dd')&=-1 \quad\text{if}\quad \begin{cases} c=\co, \ \ c',d,d'\in \C' \ \ \text{and}\ \ \ep(c',d)=1\\
c'=\co, \ \ c,d,d'\in \C' \ \ \text{and}\ \ \ep(c,d)=0\\
\end{cases}\label{eq:-1}\\
\delta^\ep(cc',dd')&=1 \quad\text{if}\quad \begin{cases} d'=\co, \ \ c',d\in \C' \ \ \text{and}\ \ \ep(c',d)=0\\
d=\co, \ \ c,c',d'\in \C' \ \ \text{and}\ \ \ep(c',d')=1
\end{cases}\label{eq:1}\,\cdot
\end{align}
\end{deff}
\begin{rem}
Note that the energy $\ep'_2$ defines a binary relation $\odg$ on secondary parts of $\Sc$ as follows,
\begin{equation}\label{eq:diffcd2}
(2k+\ep(c,c'))_{cc'}\odg (2l+\ep(d,d'))_{dd'} \Longleftrightarrow k-l-\ep(c',d)-\ep(d,d')\geq \delta^\ep(cc',dd')\,\cdot
\end{equation}
\end{rem}
\bi 
The level above \Thm{theo:flatreg1}  can be stated as follows.
\begin{theo}[degree two]\label{theo:flatreg2} Assuming that $\co=1$,
 there exists a bijection between $\Regg$ and $\Fltt$ which preserves the total energy and the sequence of colors different from $\co$.
\end{theo}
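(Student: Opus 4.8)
The plan is to assemble the bijection by transporting both sides to the level of the upper and lower halves $\gamma,\mu$ of the secondary parts, and then running the degree-one bijection $\Omega$ of \Thm{theo:flatreg1} together with the bijection $\Odd\cong\E$ of \Thm{theo:degree2} on the resulting primary data.

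First I would unfold the flat side. Given $\pi=(\pi_0,\ldots,\pi_{s-1},0_{\co^2})\in\Fltt$, I replace each secondary part $\pi_i$ by the ordered pair $(\gamma(\pi_i),\mu(\pi_i))$ of its halves. Inside a secondary part one has $\gamma(\pi_i)\gte\mu(\pi_i)$ by \Def{def:secpar}, and the computation \eqref{eq:flatcond2} shows that the flat bond $\pi_i\gtei\pi_{i+1}$ is exactly $\mu(\pi_i)\gte\gamma(\pi_{i+1})$. Hence the concatenation $(\gamma(\pi_0),\mu(\pi_0),\ldots,\gamma(\pi_{s-1}),\mu(\pi_{s-1}),0_{\co})$ is a single $\gte$-chain, i.e.\ a primary flat partition of $\Flt$ with an even number of non-ground parts, the ground $\co^2$ collapsing onto the ground $\co$. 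This is a bijection onto the even-length members of $\Flt$: conversely any even-length flat partition folds into consecutive pairs, each of which, being tight, is a legitimate secondary part by \Def{def:secpar}.

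Next I apply $\Omega$ to this even-length primary flat partition. Because $\Omega$ preserves the total energy and the sequence of colors different from $\co$, it preserves the length of the colour sequence, hence the parity of the number of non-ground parts; so it maps even-length flat partitions bijectively to even-length regular partitions (relation $\od$), keeping size and colours. The target set $\Regg$ unfolds in the parallel way: by \eqref{eq:diffcd2} a bond $\nu_i\odg\nu_{i+1}$ reads $k-l-\ep(c',d)-\ep(d,d')\ge\delta^\ep(cc',dd')$, which for $\delta^\ep=0$ is precisely the minimal-difference bond $\mu(\nu_i)\od\gamma(\nu_{i+1})$, while the halves inside each secondary part stay tight. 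Thus, up to the corrections $\delta^\ep$, $\Regg$ is the family of even-length primary sequences whose consecutive pairs are secondary parts, i.e.\ the all-secondary configurations of $\E$. The bijection $\Regg\to\Fltt$ will be the composite of this unfolding, the bijection of \Thm{theo:degree2} (which bundles and unbundles tight consecutive primary parts into secondary parts and back, preserving colour sequence and size), and $\Omega^{-1}$ followed by the flat re-folding of the first step; the ground $\co$ is grafted on exactly as in the degree-one situation.

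The crux, and the step I expect to be the main obstacle, is to verify that the relation $\odg$ attached to the energy $\ep'_2=\ep_2+2\delta^\ep$ — with the systematic correction \eqref{eq:always} together with the extra exceptions \eqref{eq:-1}–\eqref{eq:1} that occur only when $\delta_g=1$ — is exactly the relation that the composite forces on the folded secondary parts. Since $\Omega$ handles the ground colour $\co$ specially (this is the role of the hypotheses $\ep(\co,\co)=0$ and $\delta_g=\ep(\co,c)=1-\ep(c,\co)$ in \Thm{theo:flatreg1}), the folding interacts nontrivially with the ground precisely when $\co$ sits in one of the four colour slots $c,c',d,d'$ of a consecutive pair of secondary parts, which is exactly why every nonzero value of $\delta^\ep$ involves $\co$. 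Establishing the theorem therefore comes down to a case analysis over the four cases \eqref{pp}–\eqref{ss} of $\odp$ and the possible positions of $\co$, matching each configuration with the prescribed value of $\delta^\ep$ and treating the regimes $\delta_g=0$ and $\delta_g=1$ separately.
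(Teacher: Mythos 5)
Your chain of reductions is in fact the paper's chain ($\Fltt\leftrightarrow\Flt\leftrightarrow\Reg\leftrightarrow\Odd\leftrightarrow\E\leftrightarrow\Regg$, with \Thm{theo:flatreg1} and \Thm{theo:degree2} as black boxes), but both folding steps are set up incorrectly, and the errors are not mere bookkeeping. On the flat side, unfolding $\pi\in\Fltt$ into halves is \emph{not} a bijection onto the even-length members of $\Flt$: whenever the last secondary part has color $c\co$, i.e. $c_{2s-1}=\co$, its lower half is $\mu(\pi_{s-1})=0_{\co}$, so your concatenation ends $\ldots,0_{\co},0_{\co}$ and is not a grounded partition at all. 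This is exactly why the paper's map $\mathcal{F}$ (\Sct{sect:flat12}) is defined by two cases, dropping $\mu(\pi_{s-1})$ in that situation, and why it is a bijection onto \emph{all} of $\Flt$, odd lengths included: for instance with $\C=\{a,\co\}$ and $\delta_g=0$, the partition $(2_{\co^2},1_{a\co},0_{\co^2})\in\Fltt$ must be sent to the odd-length flat partition $(1_{\co},1_{\co},1_a,0_{\co})$. Relatedly, $\Omega$ cannot transport any total-length parity: it preserves only the sequence of colors different from $\co$, every member of $\Reg(C)$ has the same number of parts before the ground (the length of the word $C$), while flat partitions carry arbitrarily many interspersed $\co$-colored parts; so the claim ``even-length flat $\mapsto$ even-length regular'' is false, and your composite fails to be a bijection already at this stage.

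On the regular side lies the missing key idea. Unfolded, $\Regg$ is \emph{not} ``the all-secondary configurations of $\E$ up to $\delta^\ep$'': a partition in $\Regg$ may contain parts with colors $c\co$ or $\co c$ ($c\in\C'$), whose halves are not both colored in $\C'$, and these are precisely the places where $\delta^\ep\neq 0$. The paper's device (\Sct{sect:oeground}, \Prp{prop:r}) is the map $\Rr$, which reinterprets such a part as a \emph{primary} part of color $c$ and the same size, the parity of the size dictating whether $\co$ occupies the first or second slot; this identifies $\Regg(C,n)$ with the full mixed set $\E^{\rho_+}(C,n)$, not with its all-secondary subset. Only with this identification is \Thm{theo:degree2} usable, since that theorem is a bijection between all of $\Odd$ and all of $\E$ with fixed color word and size; restricted to all-secondary configurations it would not match $\Odd^{\rho_+}(C,n)$, so your composite would not be onto. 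Your closing case analysis (the four cases \eqref{pp}--\eqref{ss} against the prescribed values of $\delta^\ep$) is indeed the needed verification --- it is the paper's \Lem{lem:5} --- but it only makes sense once the encoding $\Rr$ of primary parts as secondary parts with one $\co$-slot is in place; one also needs the minimal-size condition $\rho=1-\delta_g$ on last parts (\Lem{lem:4}, and the sets $\Odd^{\rho_+}$, $\E^{\rho_+}$) so that the ground can be grafted consistently. As written, the proposal does not assemble into a bijection between $\Regg$ and $\Fltt$.
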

Let us give a example of such identity. Consider the set $\C=\{a,b,c\}$, $\co=c$ and the energy matrix
$$M_\ep = 
\bordermatrix{
\text{}&a&b&c
\cr a&1&1&1
\cr b&0&1&1
\cr c&0&0&0
}\,\cdot
$$
We then obtain the energy matrices for $\ep_2$ and $\ep'_2$
$$
\begin{footnotesize}
M_{\ep_2} = 
\bordermatrix{
\text{}&a^2&ab&ac&ba&b^2&bc&ca&cb&c^2
\cr a^2&4&4&4&3&4&4&3&3&3
\cr ab&2&2&2&3&4&4&3&3&3
\cr ac&2&2&2&1&2&2&1&1&1
\cr ba&3&3&3&2&3&3&2&2&2
\cr b^2&2&2&2&3&4&4&3&3&3
\cr bc&2&2&2&1&2&2&1&1&1
\cr ca&3&3&3&2&3&3&2&2&2
\cr cb&1&1&1&2&3&3&2&2&2
\cr c^2&1&1&1&0&1&1&0&0&0
}\quad,\qquad M_{\ep'_2} = 
\bordermatrix{
\text{}&a^2&ab&ac&ba&b^2&bc&ca&cb&c^2
\cr a^2&4&4&4&3&4&4&3&3&3
\cr ab&2&2&2&3&4&4&3&3&3
\cr ac&2&2&2&1&2&2&3&3&1
\cr ba&3&3&3&2&3&3&2&2&2
\cr b^2&2&2&2&3&4&4&3&3&3
\cr bc&2&2&2&1&2&2&1&3&1
\cr ca&3&3&3&2&3&3&2&2&2
\cr cb&1&1&1&2&3&3&2&2&2
\cr c^2&1&1&1&0&1&1&0&0&0
}\,\cdot
\end{footnotesize}
$$
Since in the regular partitions we never have a color $b^2$ except for the last part $0_{b^2}$, one can consider partitions into parts with color in $\{a^2,ab,ac,ba,b^2,bc,ca,cb\}$, satisfying the  minimal difference condition in 
\[
\qquad M_{\ep'_2} = 
\bordermatrix{
\text{}&a^2&ab&ac&ba&b^2&bc&ca&cb
\cr a^2&4&4&4&3&4&4&3&3
\cr ab&2&2&2&3&4&4&3&3
\cr ac&2&2&2&1&2&2&3&3
\cr ba&3&3&3&2&3&3&2&2
\cr b^2&2&2&2&3&4&4&3&3
\cr bc&2&2&2&1&2&2&1&3
\cr ca&3&3&3&2&3&3&2&2
\cr cb&1&1&1&2&3&3&2&2
}\,
\]
and such that the minimal sizes for the part with color $a^2,ab,ac,ba,b^2,bc,ca,cb$ are respectively $3,3,1,2,3,1,2,2$. By applying the transformation $(q,a,b,c)\mapsto (q^4,q^{-3},q^{-1},1)$, we obtain the companion of Siladic's identity given in Corollary \ref{cor:silcomp}. 
\section{Applications to level one perfect crystals}\label{sect:level1}
\subsection{Notion of crystals}
\subsubsection{Crystals}
We here introduce the basic tools which will be useful for the computation of level one standard modules' characters. For further references, see \cite{HK02,(KMN)$^2$a}.
\bi 
Let $n$ be a non-negative integer, and consider the Cartan datum $(A,\Pi,\Pi^\vee,P,P^\vee)$ for a generalised Cartan matrix $A$ of affine type and rank $n$.
The set $\Pi$ is the set of the simple roots $\alpha_i (i \in \{0, \dots , n\})$, and 
we denote by $\bar P = \Z\Lambda_0 \oplus \cdots \oplus \Z\Lambda_{n}$ the lattice of the classical weights, where the elements $\Lambda_{\ell}$ $(\ell \in \{0, \dots , n\})$ are the 
fundamental weights. The null root $\delta$, can be uniquely written 
$$\delta = d_0\alpha_0+\cdots+d_n\alpha_n$$
with $d_0,\ldots,d_n$ being positive integers.
Let us now introduce the notion of crystal.
\begin{deff}
Let $A=(a_{i,j})_{0 \leq i,j \leq n-1}$ be a Cartan matrix with associated \textit{Cartan datum} $(A,\Pi,\Pi^\vee,P,P^\vee).$ A \textit{crystal} associated with $(A,\Pi,\Pi^\vee,P,P^\vee)$ is a set $\B$ together with maps
\begin{align*}
\mathrm{wt} : \B &\longrightarrow P,\\
\tilde e_i, \tilde f_i : \B &\longrightarrow \B \cup \{0\} \qquad \quad\ (i \in \I),\\
\varepsilon_i,\varphi_i : \B &\longrightarrow \Z \cup \{- \infty\} \qquad (i \in \I),
\end{align*}
satisfying the following properties for all $i \in \I$:
\begin{enumerate}
\item $\varphi_i(b)=\varepsilon_i(b)+ \langle h_i, \mathrm{wt} (b) \rangle,$
\item $\mathrm{wt}(\tilde e_ib)= \mathrm{wt}b+\alpha_i$ if $\tilde{e_i}b \in \B,$
\item $\mathrm{wt}(\tilde f_ib)= \mathrm{wt}b-\alpha_i$ if $\tilde{f_i}b \in \B,$
\item $\varepsilon_i(\tilde e_ib)= \varepsilon_i(b)-1$ if $\tilde{e_i}b \in \B,$
\item  $\varphi_i(\tilde e_ib)= \varphi_i(b)+1$ if $\tilde{e_i}b \in \B,$
\item $\varepsilon_i(\tilde f_ib)= \varepsilon_i(b)+1$ if $\tilde{f_i}b \in \B,$
\item  $\varphi_i(\tilde f_ib)= \varphi_i(b)-1$ if $\tilde{f_i}b \in \B,$
\item $\tilde{f_i}b=b'$ if and only if $b= \tilde{e_i}b'$ for $b,b' \in \B,$
\item if $\varphi_i(b)= - \infty$ for $b \in \B,$ then $\tilde{e_i}b= \tilde{f_i}b=0.$
\end{enumerate}
\end{deff}
A graphical representation of a crystal $\B$, called the crystal graph, consists of a graph whose vertices are the elements of $\B$, and whose edges are $i$-arrows satisfying
$$ b \xrightarrow[]{\,\,\, i \,\,\,} b' \quad \text{if and only if}\quad \tilde f_i b = b' \text{ (or equivalently } \tilde e_i b' = b) .$$
In the following, for $i \in \I$,  we define the functions $\varepsilon_i, \varphi_i: \B \rightarrow \Z$ by
$$
\begin{array}{cc}
&\varepsilon_i(b) = \max\{ k  \geq 0 \mid \tilde e_i^k b \in \B\}, \\
&\varphi_i(b) =  \max\{ k  \geq 0 \mid \tilde f_i^k b \in \B\}. \end{array}
$$
In other words,  $\varepsilon_i(b)$ is the length of the longest chain of $i$-arrows ending at $b$ in the crystal graph, and $\varphi_i(b)$ is the length of the longest chain of
$i$-arrows starting from $b$. These definitions for $\varepsilon_i$ and $\varphi_i$ will be possible because of the nature of the crystals coming from the crystal base of integrable modules that we will consider in the following. Furthermore, by setting
\begin{equation}
\label{eq:epsilonphi}
\varepsilon(b) = \sum_{i=0}^{n-1} \varepsilon_i(b) \Lambda_i,
\qquad \text{and} \qquad \varphi(b) = \sum_{i=0}^{n-1} \varphi_i(b) \Lambda_i,
\end{equation}
we then have $ \wt(b) = \varphi(b) -\varepsilon(b)$ for all $b \in \B$, where $\wt(b)$ is the projection of $\mathrm{wt}(b)$ on $\overline{P}$.
\bi  We now define the tensor product of crystals.
\begin{deff}
Let $\B_1, \B_2$ two crystals associated with $(A,\Pi,\Pi^\vee,P,P^\vee)$. The tensor product $\B
= \B_1 \ot \B_2 \equiv \B_1 \times \B_2$ is the crystal satisfying the following: 
\begin{equation}\label{eq:tensrul}
\begin{array}{cc}
&\tilde e_i (b_1 \ot b_2) = \begin{cases} \tilde e_i b_1 \ot b_2 \quad & \hbox{\rm if} \ \ \varphi_i(b_1)  \geq \varepsilon_i(b_2), \\
b_1 \ot \tilde e_i b_2 \quad & \hbox{\rm if} \ \ \varphi_i(b_1) < \varepsilon_i(b_2), \end{cases}\\
&\tilde f_i (b_1 \ot b_2) = \begin{cases} \tilde f_i b_1 \ot b_2 \quad & \hbox{\rm if} \ \ \varphi_i(b_1) > \varepsilon_i(b_2), \\
b_1 \ot \tilde f_i b_2 \quad & \hbox{\rm if} \ \ \varphi_i(b_1) \leq \varepsilon_i(b_2), \end{cases}
\end{array} \end{equation}
where $b_1 \ot 0 = 0\ot b_2 = 0$ for all $b_1\in\B_1$ and $b_2\in \B_2$, and 
\begin{align*}
 \mathrm{wt}(b_1\ot b_2)&= \mathrm{wt} b_1+ \mathrm{wt} b_2,\\
 \varepsilon_i (b_1\ot b_2) &= \max\{\varepsilon_i(b_1),\varepsilon_i(b_1)+\varepsilon_i(b_2)-\varphi_i(b_1)\},\\
 \varphi_i (b_1\ot b_2) &= \max\{\varphi_i(b_2),\varphi_i(b_1)+\varphi_i(b_2)-\varepsilon_i(b_2)\}.
\end{align*}
\end{deff}
To fully understand the tensor rule, we can picture it on the crystal graph with the following maximal chains of $i$-arrows and $j$-arrows with $i\neq j$.
\begin{center}
\begin{tikzpicture}[scale=0.6, every node/.style={scale=0.6}]

\draw (0.25,0.25) node {$\B_1$};
\draw (0,0) node {$\ot$};
\draw (-0.25,-0.25) node {$\B_2$};

\foreach \x in {0,...,5}
\foreach \y in {1,...,5}
\draw (\x,-\y) node {$\bullet$};;

\foreach \x in {1,...,5}
\draw (\x,0) node {$\bullet$};

\draw [dashed] (-0.5,-2.5)--(5.5,-2.5);
\draw [dashed] (-0.5,-0.5)--(5.5,-0.5);
\draw [dashed] (-0.5,0.5)--(5.5,0.5);
\draw [dashed] (-0.5,-5.5)--(5.5,-5.5);

\draw [dashed] (0.5,0.5)--(0.5,-5.5);
\draw [dashed] (3.5,0.5)--(3.5,-5.5);
\draw [dashed] (-0.5,0.5)--(-0.5,-5.5);
\draw [dashed] (5.5,0.5)--(5.5,-5.5);

\foreach \x in {0,3,4,5}
\draw [->>] (\x,-1-0.1) -- (\x,-2+0.1);

\foreach \y in {0,1,2,3}
\draw [->] (4+0.1,-\y) -- (5-0.1,-\y);

\foreach \x in {0,1,2,3,5}
\draw [->] (\x,-3-0.1) -- (\x,-4+0.1);
\foreach \x in {0,1,2,3,4,5}
\draw [->] (\x,-4-0.1) -- (\x,-5+0.1);

\foreach \y in {0,1,2,3,4,5}
\draw [->>] (1+0.1,-\y) -- (2-0.1,-\y);
\foreach \y in {0,1,3,4,5}
\draw [->>] (2+0.1,-\y) -- (3-0.1,-\y);

\end{tikzpicture}
\end{center}
An important property of the tensor product is its associativity:  $(\B_1\ot \B_2)\ot \B_3 = \B_1 \ot (\B_2\ot \B_3)$.
\bi  
We finally introduce the notion of energy function. 
\begin{deff} Let $\B$ be crystal base.
An {\it energy function} on $\B \ot \B$ is a map $H: \B \ot \B \rightarrow
\Z$ satisfying
\begin{equation}\label{eq:ef} H\left(\tilde e_i (b_1 \ot b_2)\right)
= \begin{cases} 
H(b_1 \ot b_2) + \chi(i=0) & \qquad \hbox{\rm if} \ \  \varphi_i(b_1)  \geq \varepsilon_i(b_2)  \\
H(b_1 \ot b_2) - \chi(i=0) & \qquad \hbox{\rm if} \ \  \varphi_i(b_1)< \varepsilon_i(b_2),
\end{cases} \end{equation}
for all $i \in \I$ and $b_1,b_2$ with $\tilde e(b_1 \ot b_2) \neq 0$.
\end{deff}
By definition, in the crystal graph of $\B \ot \B$, the value of $H(b_1\ot b_2)$, when it exists, determines all the values $H(b'_1\ot b'_2)$ for vertices $b'_1\ot b'_2$ in the same connected component as $b_1\ot b_2$.
\subsubsection{Character formula}
The notion of perfect crystals, introduced by Kang and al. \cite{(KMN)$^2$a,(KMN)$^2$b}, appears as a possible method to compute the characters of standard modules. The notion of grounded partitions, introduced by the author and Dousse, was deeply influenced by the behaviour of the perfect crystals and the related character formula.
\bi 
Let us consider a perfect crystal $\B$ of level $\ell$, a classical weight $\Ll$  of level $\ell$ satisfying  $\p_{\Lambda}=(\cdots\ot g \ot g)$, where $\p_{\Lambda}$ is the ground state path, and let us assume that $ H(g \ot g)=0$. We then define the set of colors indexed by $\B$
$$\C_{\B}=\{c_b: \,b\in \B\}$$
and the energy $\ep$ by 
\begin{equation}\label{eq:restr}
\ep(c_b,c_{b'})=H(b'\ot b)\cdot
\end{equation}
We then obtain the following theorem from \cite{DK2}.
\begin{theo}[Dousse-K.] \label{theo:formchar}
By a change of variable $q=e^{-\delta/d_0}$ and $c_b=e^{\overline{\mathrm{wt}} b}$ for all $b\in \B$, we have $\co=1$ and the following identity:
\begin{equation}
 \sum_{\pi\in \Flt} C(\pi)q^{|\pi|} = e^{-\Ll}\rm{ch}(L(\Ll))\,,
\end{equation}
where $\Flt$ is the set of flat partitions with ground $\co$ and energy $\ep$, and  we assume that the colors $c_b$ commute in the generating function. 
\footnote{Here we use a corrected version of  KMN character formula, with $\delta/d_0$ instead of $\delta$.}
\end{theo}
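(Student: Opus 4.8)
The plan is to pass through the path realization of the crystal $B(\Ll)$ afforded by the perfect crystal $\B$ and to match $\Ll$-paths with flat partitions entry by entry. Recall from the theory of perfect crystals (see \cite{HK02}) that, since $\B$ is perfect of level $\ell$ and $\Ll$ has level $\ell$, the crystal $B(\Ll)$ of the standard module $L(\Ll)$ is isomorphic to the set $\mathcal{P}(\Ll)$ of $\Ll$-paths, that is, sequences $p=(\ldots,p_2,p_1,p_0)$ with $p_k\in\B$ and $p_k=g$ for all large enough $k$, the ground-state path $\p_\Ll=(\ldots,g,g)$ corresponding to the highest weight vector. Since $\mathrm{ch}(L(\Ll))=\sum_{b\in B(\Ll)}e^{\mathrm{wt}(b)}$, I would first rewrite $e^{-\Ll}\mathrm{ch}(L(\Ll))=\sum_{p\in\mathcal{P}(\Ll)}e^{\mathrm{wt}(p)-\Ll}$ and invoke the (corrected) KMN$^2$ weight formula
\[
\mathrm{wt}(p)-\Ll=\sum_{k\geq 0}\bigl(\wt(p_k)-\wt(g)\bigr)-\Bigl(\sum_{k\geq 0}(k+1)\bigl(H(p_{k+1}\ot p_k)-H(g\ot g)\bigr)\Bigr)\frac{\delta}{d_0}\,,
\]
in which, as flagged in the footnote, the energy is measured along $\delta/d_0$ and the classical weights $\wt(p_k)$ carry no $\delta$-component.

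Next I would set up the bijection between $\Flt$ and $\mathcal{P}(\Ll)$. To an $\Ll$-path $p$ with $p_k=g$ for $k\geq s$ and $p_{s-1}\neq g$, associate the flat partition $\pi=(\pi_0,\ldots,\pi_{s-1},0_{\co})$ determined by its color sequence $c(\pi_k)=c_{p_k}$, whose sizes are forced by the flatness relation to satisfy $|\pi_k|=\sum_{l\geq k}\ep(c(\pi_l),c(\pi_{l+1}))$. Because $\ep(c_b,c_{b'})=H(b'\ot b)$ and $H(g\ot g)=0$, every eventually-ground path yields a legitimate flat partition (the telescoping sum defines the sizes and no inequality can be violated), and the condition $p_{s-1}\neq g$ matches exactly the grounded requirement $\pi_{s-1}\neq 0_{\co}$: a penultimate part with color $c_g$ would have size $H(g\ot g)=0$ and hence equal $0_{\co}$. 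The trivial path corresponds to $(0_{\co})$, so this map is a color-preserving bijection.

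It then remains to identify the two gradings under the substitution $q=e^{-\delta/d_0}$, $c_b=e^{\wt b}$. On the color side, using $\co=c_g=1$ (equivalently $\wt(g)=0$), the classical factor telescopes as
\[
\exp\Bigl(\sum_{k\geq 0}\bigl(\wt(p_k)-\wt(g)\bigr)\Bigr)=\prod_{k\geq 0}\frac{c_{p_k}}{c_g}=\prod_{k=0}^{s-1}c(\pi_k)=C(\pi)\,.
\]
On the energy side, the key computation is the Abel summation
\[
\sum_{k\geq 0}(k+1)H(p_{k+1}\ot p_k)=\sum_{k\geq 0}\sum_{l\geq k}H(p_{l+1}\ot p_l)=\sum_{k\geq 0}|\pi_k|=|\pi|\,,
\]
so that the $\delta/d_0$-term contributes exactly $q^{|\pi|}$. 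Combining the two factors, every path contributes $e^{\mathrm{wt}(p)-\Ll}=C(\pi)q^{|\pi|}$, and summing over $\mathcal{P}(\Ll)\cong\Flt$ produces the claimed identity.

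The main obstacle is not the combinatorics, which reduces to the telescoping and Abel summation above, but the correct invocation and normalization of the path weight formula: one must justify the $\delta/d_0$ scaling of the energy term (the correction noted in the footnote), verify that the ground state satisfies $\wt(g)=0$ so that $\co$ indeed specializes to $1$, and check that all displayed sums and products are well defined because only finitely many $p_k$ differ from $g$. Once the weight formula is pinned down in this normalized form, the remaining steps are the bookkeeping carried out above.
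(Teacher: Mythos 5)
This theorem is not proved in the paper you were given: it is imported verbatim from \cite{DK2} (Theorem 3.8 there), so there is no internal proof to compare against. Your reconstruction --- realizing $B(\Ll)$ by $\Ll$-paths via perfectness, matching eventually-ground paths with flat partitions through the color sequence (sizes forced by telescoping the flatness relation), and converting the corrected KMN weight formula into the grading $C(\pi)q^{|\pi|}$ using $\overline{\mathrm{wt}}(g)=0$ (hence $\co=1$) together with the double count $\sum_{k\geq 0}(k+1)H(p_{k+1}\ot p_k)=\sum_{k\geq 0}|\pi_k|=|\pi|$ --- is exactly the argument by which the theorem is established in \cite{DK2}, including the $\delta/d_0$ normalization flagged in the footnote; your proposal is correct and follows essentially the same route as the cited source.
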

In the remainder of this section, by using the above theorem and \Thm{theo:flatreg1}, we compute the character formula corresponding to the following level one weights:
\begin{itemize}
\item $\Lambda_0$ for the affine type $A_{2n}^{(2)}(n\geq 2)$,
\item $\Lambda_0$ and $\Lambda_n$ for the affine type $D_{n+1}^{(2)}(n\geq 2)$,
\item $\Lambda_n$ for the affine type $B_{n}^{(1)}(n\geq 3)$.
\end{itemize}
\subsection{Case of affine type $A_{2n}^{(2)}(n\geq 2)$}
The crystal $\B$ of the vector representation of $A_{2n}^{(2)}(n\geq 2)$ is given by the crystal graph below
\begin{fig}
\[\]
\begin{center}
\begin{tikzpicture}[scale=0.6, every node/.style={scale=0.6}]
\draw (-4,0) node{$\B$ :};
\draw (-4,-0.75) node{$b^{\Ll_0}=b_{\Ll_0}=0$};
\draw (-4,-1.5) node{$\p_{\Ll_0}=(\cdots000)$};
\draw (-1.5,-0.75) node {$0$};
\draw (0,0) node {$1$};
\draw (1.5,0) node {$2$};
\draw (5,0) node {$n-1$};
\draw (6.75,0) node {$n$};
\draw (3.15,0) node {$\cdots$};
\draw (0,-1.5) node {$\overline{1}$};
\draw (1.5,-1.5) node {$\overline{2}$};
\draw (5,-1.5) node {$\overline{n-1}$};
\draw (6.75,-1.5) node {$\overline{n}$};
\draw (3.15,-1.5) node {$\cdots$};

\draw (-0.2-1.5,0.2-0.75)--(0.2-1.5,0.2-0.75)--(0.2-1.5,-0.2-0.75)--(-0.2-1.5,-0.2-0.75)--(-0.2-1.5,0.2-0.75);

\foreach \x in {0,1,4.5} 
\foreach \y in {0,1}
\draw (-0.2+1.5*\x,0.2-1.5*\y)--(0.2+1.5*\x,0.2-1.5*\y)--(0.2+1.5*\x,-0.2-1.5*\y)--(-0.2+1.5*\x,-0.2-1.5*\y)--(-0.2+1.5*\x,0.2-1.5*\y);

\foreach \x in {0} 
\foreach \y  in {0,1}
\draw (-0.45+5+2*\x,0.2-1.5*\y)--(0.45+5+2*\x,0.2-1.5*\y)--(0.45+5+2*\x,-0.2-1.5*\y)--(-0.45+5+2*\x,-0.2-1.5*\y)--(-0.45+5+2*\x,0.2-1.5*\y);

\draw [thick, ->] (-1.45,-0.5)--(-0.3,-0.05);
\draw [thick, <-] (-1.45,-1)--(-0.3,-1.45);
\draw (-0.8,-0.1) node {\footnotesize{$0$}};
\draw (-0.8,-1.4) node {\footnotesize{$0$}};

\draw [thick, ->] (0.3,0)--(1.2,0);
\draw [thick, <-] (0.3,-1.5)--(1.2,-1.5);
\draw (0.75,0.15) node {\footnotesize{$1$}};
\draw (0.75,-1.67) node {\footnotesize{$1$}};

\draw [thick, ->] (1.8,0)--(2.7,0);
\draw [thick, <-] (1.8,-1.5)--(2.7,-1.5);
\draw (2.25,0.15) node {\footnotesize{$2$}};
\draw (2.25,-1.67) node {\footnotesize{$2$}};

\draw [thick, ->] (3.6,0)--(4.5,0);
\draw [thick, <-] (3.6,-1.5)--(4.5,-1.5);
\draw (4.05,0.15) node {\footnotesize{$n-2$}};
\draw (4.05,-1.67) node {\footnotesize{$n-2$}};

\draw [thick, ->] (5.6,0)--(6.4,0);
\draw [thick, <-] (5.6,-1.5)--(6.4,-1.5);
\draw (6,0.15) node {\footnotesize{$n-1$}};
\draw (6,-1.67) node {\footnotesize{$n-1$}};

\draw [thick, ->] (6.75,-0.3)--(6.75,-1.2);
\draw (6.9,-0.75) node {\footnotesize{$n$}};

\end{tikzpicture}
\end{center}
\end{fig}
with $\wt (0)=0$ and for all $u\in \{1,\ldots,n\}$,
\begin{equation}\label{eq:weightA}
-\wt (\overline{u}) = \wt u = \frac{1}{2}\alpha_n+\sum_{i=u}^{n-1} \alpha_i\,\cdot
\end{equation} 
Here, we have 
$\delta = \alpha_n+2\sum_{i=0}^{n-1} \alpha_i$.
We thus obtain the following crystal graph for $\B\ot \B$ 
\begin{fig}
\[\]
\begin{center}
\begin{tikzpicture}[scale=0.5, every node/.style={scale=0.5}]

\draw [thick,->] (-10,-1)--(-9,-1);
\draw (-8.8,-1) node[right] {: $0$-arrow};

\draw [->] (-10,-2)--(-9,-2);
\draw (-8.8,-2) node[right] {: $n$-arrow};

\draw [dashed,->] (-10,-3)--(-9,-3);
\draw (-8.8,-3) node[right] {: paths of $i$-arrows, for consecutive $i \neq 0,n$};

\draw (-10,-3.6)--(-9,-3.6)--(-9,-4.4)--(-10,-4.4)--(-10,-3.6);
\draw (-8.8,-4) node[right] {: connected components without $0$-arrows};

 


\foreach \x in {0,...,7} 
\foreach \y in {0,...,7}
\draw (-0.2-0.4+3*\x,0.2-1.5*\y)--(0.2-0.4+3*\x,0.2-1.5*\y)--(0.2-0.4+3*\x,-0.2-1.5*\y)--(-0.2-0.4+3*\x,-0.2-1.5*\y)--(-0.2-0.4+3*\x,0.2-1.5*\y);

\foreach \x in {0,...,7} 
\foreach \y in {0,...,7}
\draw (-0.2+0.4+3*\x,0.2-1.5*\y)--(0.2+0.4+3*\x,0.2-1.5*\y)--(0.2+0.4+3*\x,-0.2-1.5*\y)--(-0.2+0.4+3*\x,-0.2-1.5*\y)--(-0.2+0.4+3*\x,0.2-1.5*\y);

\foreach \x in {0,...,7} 
\foreach \y in {0,...,7}
\draw (3*\x,-1.5*\y) node {$\ot$};


\foreach \x in {0,...,7} 
\draw (0.4+3*\x,0) node {$0$};
\foreach \y in {0,...,7} 
\draw (-0.4,-1.5*\y) node {$0$};

\foreach \x in {0,...,7} 
\draw (0.4+3*\x,-1.5) node {$1$};
\foreach \y in {0,...,7} 
\draw (-0.4+3,-1.5*\y) node {$1$};

\foreach \x in {0,...,7} 
\draw (0.4+3*\x,-3) node {$u$};
\foreach \y in {0,...,7} 
\draw (-0.4+6,-1.5*\y) node {$u$};

\foreach \x in {0,...,7} 
\draw (0.4+3*\x,-4.5) node {$n$};
\foreach \y in {0,...,7} 
\draw (-0.4+9,-1.5*\y) node {$n$};

\foreach \x in {0,...,7} 
\draw (0.4+3*\x,-6) node {$\overline{n}$};
\foreach \y in {0,...,7} 
\draw (-0.4+12,-1.5*\y) node {$\overline{n}$};

\foreach \x in {0,...,7} 
\draw (0.4+3*\x,-7.5) node {$\overline{u}$};
\foreach \y in {0,...,7} 
\draw (-0.4+15,-1.5*\y) node {$\overline{u}$};

\foreach \x in {0,...,7} 
\draw (0.4+3*\x,-9) node {$\overline{1}$};
\foreach \y in {0,...,7} 
\draw (-0.4+18,-1.5*\y) node {$\overline{1}$};

\foreach \x in {0,...,7} 
\draw (0.4+3*\x,-10.5) node {$0$};
\foreach \y in {0,...,7} 
\draw (-0.4+21,-1.5*\y) node {$0$};

\foreach \x in {0,...,5,7} 
\draw [thick, ->] (3*\x,-0.25)--(3*\x,-1.25);
\foreach \y in {0,2,3,4,5,6,7} 
\draw [thick, ->] (18.9,-1.5*\y)--(20.1,-1.5*\y);

\foreach \x in {0,2,3,4,6,7} 
\draw [dashed, ->] (3*\x,-1.75)--(3*\x,-2.75);
\foreach \y in {0,1,3,4,5,7} 
\draw [dashed, ->] (15.9,-1.5*\y)--(17.1,-1.5*\y);

\foreach \x in {0,1,3,5,6,7} 
\draw [dashed, ->] (3*\x,-3.25)--(3*\x,-4.25);
\foreach \y in {0,1,2,4,6,7} 
\draw [dashed, ->] (12.9,-1.5*\y)--(14.1,-1.5*\y);

\foreach \x in {0,1,2,4,5,6,7} 
\draw [->] (3*\x,-4.75)--(3*\x,-5.75);
\foreach \y in {0,1,2,3,5,6,7} 
\draw [->] (9.9,-1.5*\y)--(11.1,-1.5*\y);

\foreach \x in {0,2,3,4,6,7}
\draw [dashed, ->] (3*\x,-7.75)--(3*\x,-8.75);
\foreach \y in {0,1,2,4,6,7} 
\draw [dashed, ->] (6.9,-1.5*\y)--(8.1,-1.5*\y);

\foreach \x in {0,1,3,5,6,7} 
\draw [dashed, ->] (3*\x,-6.25)--(3*\x,-7.25);
\foreach \y in {0,1,3,4,5,7} 
\draw [dashed, ->] (3.9,-1.5*\y)--(5.1,-1.5*\y);

\foreach \x in {1,...,5} 
\draw [thick, ->] (3*\x,-9.25)--(3*\x,-10.25);
\foreach \y in {2,...,6} 
\draw [thick, ->] (0.9,-1.5*\y)--(2.1,-1.5*\y);


\foreach \x in {21}
\foreach \y in {10.5}
\draw [red] (-0.7+\x,0.25-\y)--(0.7+\x,0.25-\y)--(0.7+\x,-0.25-\y)--(-0.7+\x,-0.25-\y)--(-0.7+\x,0.25-\y);

\foreach  \x in {21}
\draw [blue](-0.7+\x,0.25-1.5)--(0.7+\x,0.25-1.5)--(0.7+\x,-0.25-9)--(-0.7+\x,-0.25-9)--(-0.7+\x,0.25-1.5);

\foreach \y in {10.5}
\draw [blue](-0.7+3,0.25-\y)--(0.7+18,0.25-\y)--(0.7+18,-0.25-\y)--(-0.7+3,-0.25-\y)--(-0.7+3,0.25-\y);

\draw [red] (-0.7+3,1-3)--(-0.7+3,-0.25-9)--(0.25+16.5,-0.25-9)--(-0.7+3,1-3);

\draw [foge] (-0.7+3,0.25-1.5)--(0.7+18,0.25-1.5)--(0.7+18,-0.25-9)--(-0.7+18,-0.25-9)--(-0.7+3,-0.25-1.5)--(-0.7+3,0.25-1.5);

\end{tikzpicture}
\end{center}
\end{fig}
We then consider the set of colors $\C=\{c_1,\ldots,c_n,c_{\overline{n}},\ldots,c_{\overline{1}},c_0\}$, $\co=c_0$, and by setting $\ep'(c_u,c_v)=H(v\ot u)$ and $H(0\ot 0)=0$, we obtain the following energy matrix for $\ep'$:
\[
\bordermatrix{
\text{}&c_1&\cdots&c_{\overline{1}}&c_0
\cr c_1&2&\cdots&2&1
\cr \vdots&\vdots&\ddots&\vdots&\vdots
\cr c_{\overline{1}}&0&\cdots&2&1
\cr c_0&1&\cdots&1&0
}\,\cdot
\]
This energy matrix can be obtain by taking the energy matrix of $\ep$ defined by
\[
\bordermatrix{
\text{}&c_1&\cdots&c_{\overline{1}}&c_0
\cr c_1&1&\cdots&1&1
\cr \vdots&\vdots&\ddots&\vdots&\vdots
\cr c_{\overline{1}}&0&\cdots&1&1
\cr c_0&0&\cdots&0&0
}
\]
followed by the transformation 
\begin{equation}\label{eq:transfo1}
(q,c_1,c_{\overline{1}},\ldots,c_n,c_{\overline{n}})\mapsto(q^2,c_1q^{-1},c_{\overline{1}}q^{-1},\ldots,c_nq^{-1},c_{\overline{n}}q^{-1})\,\cdot
\end{equation}
This means that, for $c\neq c_0$, the part $k_{c}$ for the energy $\ep$  is identified as the part $(2k-1)_c$ for the energy $\ep'$, and since we do not modify  the ground $c_0$, the part $k_{c_0}$ for $\ep$ is assimilated to $(2k)_{c_0}$ for $\ep'$, so that the last part still remains $0_{c_0}$.
\m
By setting $c_0=1$, we can apply \Thm{theo:flatreg1} to the flat partitions with ground $c_0$ and with energy $\ep$, and we obtain the generation function
\[
\sum_{\pi\in \Flt } C(\pi)q^{|\pi|}=\sum_{\pi\in \Reg } C(\pi)q^{|\pi|} = (-c_1q,-c_{\overline{1}}q,\ldots,-c_nq,-c_{\overline{n}}q;q)_{\infty}\,\cdot
\]
In fact, by the definition of the energy $\ep$, one can view the partitions of $\Reg$ as the finite sub-sequences, ending with $0_{c_0}$, of the infinite sequence 
\[\cdots\od 3_{c_1}\od 2_{c_{\overline{1}}}\od\cdots\od 2_{c_1}\od 1_{c_{\overline{1}}}\od \cdots\od 1_{c_{\overline{n}}}\od 1_{c_n}\od \cdots\od 1_{c_1}\od 0_{c_0}\cdot\]
Using \eqref{eq:transfo1}, we then have that the flat partitions with ground $\co$ and energy $\ep'$ are  generated by the function
\[
(-c_1q,-c_{\overline{1}}q,\ldots,-c_nq,-c_{\overline{n}}q;q^2)_{\infty}\,\cdot
\]
Using \Thm{theo:formchar} and \eqref{eq:weightA}, we obtain the formula for the character for $\Lambda_0$  given in \Thm{theo:char1}.
\subsection{Case of affine type $D_{n+1}^{(2)} (n\geq 2)$}
The crystal graph of the vector representation $\B$ of  $D_{n+1}^{(1)}(n\geq 2)$ is the following,
\begin{fig}
\[\]
\begin{center}
\begin{tikzpicture}[scale=0.6, every node/.style={scale=0.6}]
\draw (-4,0) node{$\B$ :};
\draw (-4,-0.75) node{$\p_{\Ll_0}=(\cdots0\,0\,0\,0) $};
\draw (-4,-1.5) node{$\p_{\Ll_n}=(\cdots\overline{0}\,\overline{0}\,\overline{0}\,\overline{0})$};
\draw (-1.5,-0.75) node {$0$};
\draw (8.25,-0.75) node {$\overline{0}$};
\draw (0,0) node {$1$};
\draw (1.5,0) node {$2$};
\draw (5,0) node {$n-1$};
\draw (6.75,0) node {$n$};
\draw (3.15,0) node {$\cdots$};
\draw (0,-1.5) node {$\overline{1}$};
\draw (1.5,-1.5) node {$\overline{2}$};
\draw (5,-1.5) node {$\overline{n-1}$};
\draw (6.75,-1.5) node {$\overline{n}$};
\draw (3.15,-1.5) node {$\cdots$};
\draw (-0.2-1.5,0.2-0.75)--(0.2-1.5,0.2-0.75)--(0.2-1.5,-0.2-0.75)--(-0.2-1.5,-0.2-0.75)--(-0.2-1.5,0.2-0.75);
\draw (-0.2+8.25,0.2-0.75)--(0.2+8.25,0.2-0.75)--(0.2+8.25,-0.2-0.75)--(-0.2+8.25,-0.2-0.75)--(-0.2+8.25,0.2-0.75);
\foreach \x in {0,1,4.5} 
\foreach \y in {0,1}
\draw (-0.2+1.5*\x,0.2-1.5*\y)--(0.2+1.5*\x,0.2-1.5*\y)--(0.2+1.5*\x,-0.2-1.5*\y)--(-0.2+1.5*\x,-0.2-1.5*\y)--(-0.2+1.5*\x,0.2-1.5*\y);
\foreach \x in {0} 
\foreach \y  in {0,1}
\draw (-0.45+5+2*\x,0.2-1.5*\y)--(0.45+5+2*\x,0.2-1.5*\y)--(0.45+5+2*\x,-0.2-1.5*\y)--(-0.45+5+2*\x,-0.2-1.5*\y)--(-0.45+5+2*\x,0.2-1.5*\y);

\draw [thick, ->] (-1.45,-0.5)--(-0.3,-0.05);
\draw [thick, <-] (-1.45,-1)--(-0.3,-1.45);
\draw (-0.8,-0.1) node {\footnotesize{$0$}};
\draw (-0.8,-1.4) node {\footnotesize{$0$}};
\draw [thick, ->] (0.3,0)--(1.2,0);
\draw [thick, <-] (0.3,-1.5)--(1.2,-1.5);
\draw (0.75,0.15) node {\footnotesize{$1$}};
\draw (0.75,-1.67) node {\footnotesize{$1$}};
\draw [thick, ->] (1.8,0)--(2.7,0);
\draw [thick, <-] (1.8,-1.5)--(2.7,-1.5);
\draw (2.25,0.15) node {\footnotesize{$2$}};
\draw (2.25,-1.67) node {\footnotesize{$2$}};
\draw [thick, ->] (3.6,0)--(4.5,0);
\draw [thick, <-] (3.6,-1.5)--(4.5,-1.5);
\draw (4.05,0.15) node {\footnotesize{$n-2$}};
\draw (4.05,-1.67) node {\footnotesize{$n-2$}};
\draw [thick, ->] (5.6,0)--(6.4,0);
\draw [thick, <-] (5.6,-1.5)--(6.4,-1.5);
\draw (6,0.15) node {\footnotesize{$n-1$}};
\draw (6,-1.67) node {\footnotesize{$n-1$}};
\draw [thick, ->] (7.05,-0.05)--(8.2,-0.5);
\draw [thick, <-] (7.05,-1.45)--(8.2,-1);
\draw (7.6,-0.1) node {\footnotesize{$n$}};
\draw (7.6,-1.4) node {\footnotesize{$n$}};
\end{tikzpicture}
\end{center}
\end{fig}
with $\wt (0)=\wt (\overline{0})=0$ and for all $u\in \{1,\ldots,n\}$,
\begin{equation}\label{eq:weightD}
-\wt (\overline{u}) = \wt u = \sum_{i=u}^{n} \alpha_i\,\cdot
\end{equation}
Here, we have $\delta = \sum_{i=0}^n \alpha_i$.
We thus obtain the following crystal graph for $\B\ot \B$ 
\begin{fig}
\[\]
\begin{center}
\begin{tikzpicture}[scale=0.5, every node/.style={scale=0.5}]
\draw [thick,->] (-10,0)--(-9,0);
\draw (-8.8,0) node[right] {: $0$-arrow};
\draw [->] (-10,-1)--(-9,-1);
\draw (-8.8,-1) node[right] {: $n$-arrow};
\draw [->] (-10,-2)--(-9,-2);
\filldraw (-9.5,-2) circle(2pt);
\draw (-8.8,-2) node[right] {: chains of two $n$-arrows};
\filldraw (-9.5,-3) circle(2pt);
\draw (-8.8,-3) node[right] {: vertex of the form $\overline{0}\ot \cdot$ or $\cdot\ot\overline{0}$};
\draw [dashed,->] (-10,-4)--(-9,-4);
\draw (-8.8,-4) node[right] {: paths of $i$-arrows, for consecutive $i\neq 0,n$};
\draw (-10,-4.6)--(-9,-4.6)--(-9,-5.4)--(-10,-5.4)--(-10,-4.6);
\draw (-8.8,-5) node[right] {: connected components without $0$-arrows};
\foreach \x in {0,...,7} 
\foreach \y in {0,...,7}
\draw (-0.2-0.4+3*\x,0.2-1.5*\y)--(0.2-0.4+3*\x,0.2-1.5*\y)--(0.2-0.4+3*\x,-0.2-1.5*\y)--(-0.2-0.4+3*\x,-0.2-1.5*\y)--(-0.2-0.4+3*\x,0.2-1.5*\y);
\foreach \x in {0,...,7} 
\foreach \y in {0,...,7}
\draw (-0.2+0.4+3*\x,0.2-1.5*\y)--(0.2+0.4+3*\x,0.2-1.5*\y)--(0.2+0.4+3*\x,-0.2-1.5*\y)--(-0.2+0.4+3*\x,-0.2-1.5*\y)--(-0.2+0.4+3*\x,0.2-1.5*\y);

\foreach \x in {0,...,7} 
\foreach \y in {0,...,7}
\draw (3*\x,-1.5*\y) node {$\ot$};

\foreach \x in {0,...,7} 
\draw (0.4+3*\x,0) node {$0$};
\foreach \y in {0,...,7} 
\draw (-0.4,-1.5*\y) node {$0$};

\foreach \x in {0,...,7} 
\draw (0.4+3*\x,-1.5) node {$1$};
\foreach \y in {0,...,7} 
\draw (-0.4+3,-1.5*\y) node {$1$};

\foreach \x in {0,...,7} 
\draw (0.4+3*\x,-3) node {$u$};
\foreach \y in {0,...,7} 
\draw (-0.4+6,-1.5*\y) node {$u$};

\foreach \x in {0,...,7} 
\draw (0.4+3*\x,-4.5) node {$n$};
\foreach \y in {0,...,7} 
\draw (-0.4+9,-1.5*\y) node {$n$};

\foreach \x in {0,...,7} 
\draw (0.4+3*\x,-6) node {$\overline{n}$};
\foreach \y in {0,...,7} 
\draw (-0.4+12,-1.5*\y) node {$\overline{n}$};

\foreach \x in {0,...,7} 
\draw (0.4+3*\x,-7.5) node {$\overline{u}$};
\foreach \y in {0,...,7} 
\draw (-0.4+15,-1.5*\y) node {$\overline{u}$};

\foreach \x in {0,...,7} 
\draw (0.4+3*\x,-9) node {$\overline{1}$};
\foreach \y in {0,...,7} 
\draw (-0.4+18,-1.5*\y) node {$\overline{1}$};

\foreach \x in {0,...,7} 
\draw (0.4+3*\x,-10.5) node {$0$};
\foreach \y in {0,...,7} 
\draw (-0.4+21,-1.5*\y) node {$0$};

\foreach \x in {0,...,5,7} 
\draw [thick, ->] (3*\x,-0.25)--(3*\x,-1.25);
\draw [thick, ->] (3*3.5,-0.25)--(3*3.5,-1.25);
\foreach \y in {0,2,3,4,5,6,7} 
\draw [thick, ->] (18.9,-1.5*\y)--(20.1,-1.5*\y);
\draw [thick, ->] (18.9,-1.5*3.5)--(20.1,-1.5*3.5);

\foreach \x in {0,2,3,4,6,7} 
\draw [dashed, ->] (3*\x,-1.75)--(3*\x,-2.75);
\draw [dashed, ->] (3*3.5,-1.75)--(3*3.5,-2.75);
\foreach \y in {0,1,3,4,5,7} 
\draw [dashed, ->] (15.9,-1.5*\y)--(17.1,-1.5*\y);
\draw [dashed, ->] (15.9,-1.5*3.5)--(17.1,-1.5*3.5);

\foreach \x in {0,1,3,5,6,7} 
\draw [dashed, ->] (3*\x,-3.25)--(3*\x,-4.25);
\draw [dashed, ->] (3*3.5,-3.25)--(3*3.5,-4.25);
\foreach \y in {0,1,2,4,6,7} 
\draw [dashed, ->] (12.9,-1.5*\y)--(14.1,-1.5*\y);
\draw [dashed, ->] (12.9,-1.5*3.5)--(14.1,-1.5*3.5);

\foreach \x in {0,1,2,4,5,6,7} 
\draw [->] (3*\x,-4.75)--(3*\x,-5.75);
\foreach \y in {0,1,2,3,5,6,7} 
\draw [->] (9.9,-1.5*\y)--(11.1,-1.5*\y);

\foreach \x in {0,2,3,4,6,7}
\draw [dashed, ->] (3*\x,-7.75)--(3*\x,-8.75);
\draw [dashed, ->] (3*3.5,-7.75)--(3*3.5,-8.75);
\foreach \y in {0,1,2,4,6,7} 
\draw [dashed, ->] (6.9,-1.5*\y)--(8.1,-1.5*\y);
\draw [dashed, ->] (6.9,-1.5*3.5)--(8.1,-1.5*3.5);

\foreach \x in {0,1,3,5,6,7} 
\draw [dashed, ->] (3*\x,-6.25)--(3*\x,-7.25);
\draw [dashed, ->] (3*3.5,-6.25)--(3*3.5,-7.25);
\foreach \y in {0,1,3,4,5,7} 
\draw [dashed, ->] (3.9,-1.5*\y)--(5.1,-1.5*\y);
\draw [dashed, ->] (3.9,-1.5*3.5)--(5.1,-1.5*3.5);

\foreach \x in {1,...,5} 
\draw [thick, ->] (3*\x,-9.25)--(3*\x,-10.25);
\draw [thick, ->] (3*3.5,-9.25)--(3*3.5,-10.25);
\foreach \y in {2,...,6} 
\draw [thick, ->] (0.9,-1.5*\y)--(2.1,-1.5*\y);
\draw [thick, ->] (0.9,-1.5*3.5)--(2.1,-1.5*3.5);

\draw [->] (9.2,-5.25)--(10.3,-5.25); 
\draw [->] (10.5,-5.35)--(10.5,-5.9);

\foreach \x in {21}
\foreach \y in {10.5}
\draw [red] (-0.7+\x,0.25-\y)--(0.7+\x,0.25-\y)--(0.7+\x,-0.25-\y)--(-0.7+\x,-0.25-\y)--(-0.7+\x,0.25-\y);

\foreach  \x in {21}
\draw [blue](-0.7+\x,0.25-1.5)--(0.7+\x,0.25-1.5)--(0.7+\x,-0.25-9)--(-0.7+\x,-0.25-9)--(-0.7+\x,0.25-1.5);

\foreach \y in {10.5}
\draw [blue](-0.7+3,0.25-\y)--(0.7+18,0.25-\y)--(0.7+18,-0.25-\y)--(-0.7+3,-0.25-\y)--(-0.7+3,0.25-\y);

\draw [red] (-0.7+3,1-3)--(-0.7+3,-0.25-9)--(0.25+16.5,-0.25-9)--(0.25+10.5,-0.25-6)--(0.25+10.5,0.25-5.25)--(-0.7+9,0.25-5.25)--(-0.7+3,1-3);

\draw [foge] (-0.7+3,0.25-1.5)--(0.7+18,0.25-1.5)--(0.7+18,-0.25-9)--(-0.7+18,-0.25-9)--(-0.7+12,-0.25-6)--(-0.7+12,-0.25-4.5)--(-0.7+9,-0.25-4.5)--(-0.7+3,-0.25-1.5)--(-0.7+3,0.25-1.5);

\foreach \x in {0,...,7} 
\filldraw (3*\x,-5.25) circle(2pt);
\foreach \y in {0,1,2,3,3.5,4,5,6,7} 
\filldraw (10.5,-1.5*\y) circle(2pt);

\end{tikzpicture}
\end{center}
\end{fig}
We then consider the set of colors $\C=\{c_1,\ldots,c_n,c_{\overline{0}},c_{\overline{n}},\ldots,c_{\overline{1}},c_0\}$, and by setting $\ep'(c_u,c_v)=H(v\ot u)$ and $H(0\ot 0)=0$, we obtain the following energy matrix for $\ep'$:
\begin{equation}\label{eq:energyD}
\bordermatrix{
\text{}&c_{1}&\cdots&c_{n}&c_{\overline{0}}&c_{\overline{n}}&\cdots&c_{\overline{1}}&c_{0}
\cr c_{1} &2&\cdots&2&2&2&\cdots&2&1
\cr \vdots&\vdots&\ddots&\vdots&\vdots&\vdots&2^{\star}&\vdots&\vdots
\cr c_{n}&0&\cdots&2&2&2&\cdots&2&1
\cr c_{\overline{0}}&0&\cdots&0&0&2&\cdots&2&1
\cr c_{\overline{n}}&0&\cdots&0&0&2&\cdots&2&1
\cr \vdots&\vdots&0^{\star}&\vdots&\vdots&\vdots&\ddots&\vdots&\vdots
\cr c_{\overline{1}}&0&\cdots&0&0&0&\cdots&2&1
\cr c_{0}&1&\cdots&1&1&1&\cdots&1&0
}\,\cdot
\end{equation}
\subsubsection{Character for $\Lambda_0$}
Here we set the ground to be $\co = c_0=1$. We obtain the energy matrix in \eqref{eq:energyD} by considering the energy matrix for $\ep$
\[
\bordermatrix{
\text{}&c_{1}&\cdots&c_{n}&c_{\overline{0}}&c_{\overline{n}}&\cdots&c_{\overline{1}}&c_{0}
\cr c_{1} &1&\cdots&1&1&1&\cdots&1&1
\cr \vdots&\vdots&\ddots&\vdots&\vdots&\vdots&1^{\star}&\vdots&\vdots
\cr c_{n}&0&\cdots&1&1&1&\cdots&1&1
\cr c_{\overline{0}}&0&\cdots&0&0&1&\cdots&1&1
\cr c_{\overline{n}}&0&\cdots&0&0&1&\cdots&1&1
\cr \vdots&\vdots&0^{\star}&\vdots&\vdots&\vdots&\ddots&\vdots&\vdots
\cr c_{\overline{1}}&0&\cdots&0&0&0&\cdots&1&1
\cr c_{0}&0&\cdots&0&0&0&\cdots&0&0
}
\]
followed by the transformation 
\begin{equation}\label{eq:transfo2}
(q,c_{\overline{0}},c_1,c_{\overline{1}},\ldots,c_n,c_{\overline{n}})\mapsto(q^2,c_{\overline{0}}q^{-1},c_1q^{-1},c_{\overline{1}}q^{-1},\ldots,c_nq^{-1},c_{\overline{n}}q^{-1})\,\cdot
\end{equation}
By applying \Thm{theo:flatreg1} to the corresponding flat partitions with ground $c_0$ and energy $\ep$, we obtain the generation function
\[
\sum_{\pi\in \Flt } C(\pi)q^{|\pi|}=\sum_{\pi\in \Reg } C(\pi)q^{|\pi|} = \frac{(-c_1q,-c_{\overline{1}}q,\ldots,-c_nq,-c_{\overline{n}}q;q)_{\infty}}{(c_{\overline{0}}q;q)}\,\cdot
\]
In fact, by the definition of the energy $\ep$, one can view the partitions of $\Reg$ as the finite sub-sequences, ending with $0_{c_0}$, of the infinite sequence 
\[\cdots\od 3_{c_1}\od 2_{c_{\overline{1}}}\od\cdots\od 2_{c_1}\od 1_{c_{\overline{1}}}\od \cdots\od 1_{c_{\overline{n}}}\od 1_{c_{\overline{0}}}\od 1_{c_n}\od \cdots\od 1_{c_1}\od 0_{c_0}\cdot\]
with the parts $k_{c_{\overline{0}}}$ possibly repeated.
Using \eqref{eq:transfo2}, we then have that the flat partitions with ground $c_0$ and energy $\ep'$ are  generated by the function
\[
\frac{(-c_1q,-c_{\overline{1}}q,\ldots,-c_nq,-c_{\overline{n}}q;q^2)_{\infty}}{(c_{\overline{0}}q;q^2)}\,\cdot
\]
By \Thm{theo:formchar}, \eqref{eq:weightD} and the fact that $c_{\overline{0}}=1$ with the convention of \Thm{theo:formchar}, we finally obtain the formula for the character corresponding to $\Lambda_0$.
\subsubsection{Character for $\Lambda_n$}
Here we set the ground to be $\co = c_{\overline{0}}=1$. We obtain the energy matrix in \eqref{eq:energyD} by considering the energy matrix of $\ep$
\[
\bordermatrix{
\text{}&c_1&\cdots&c_n&c_{\overline{0}}&c_{\overline{n}}&\cdots&c_{\overline{1}}&c_0
\cr c_1 &1&\cdots&1&1&0&\cdots&0&0
\cr \vdots&\vdots&\ddots&\vdots&\vdots&\vdots&0^{\star}&\vdots&\vdots
\cr c_n&0&\cdots&1&1&0&\cdots&0&0
\cr c_{\overline{0}}&0&\cdots&0&0&0&\cdots&0&0
\cr c_{\overline{n}}&1&\cdots&1&1&1&\cdots&1&1
\cr \vdots&\vdots&1^{\star}&\vdots&\vdots&\vdots&\ddots&\vdots&\vdots
\cr c_{\overline{1}}&1&\cdots&1&1&0&\cdots&1&1
\cr c_0&1&\cdots&1&1&0&\cdots&0&0
}
\quad\equiv\quad
\bordermatrix{
\text{}&c_{\overline{n}}&\cdots&c_{\overline{1}}&c_{0}&c_{1}&\cdots&c_{n}&c_{\overline{0}}
\cr c_{\overline{n}} &1&\cdots&1&1&1&\cdots&1&1
\cr \vdots&\vdots&\ddots&\vdots&\vdots&\vdots&1^{\star}&\vdots&\vdots
\cr c_{\overline{1}}&0&\cdots&1&1&1&\cdots&1&1
\cr c_{0}&0&\cdots&0&0&1&\cdots&1&1
\cr c_{1}&0&\cdots&0&0&1&\cdots&1&1
\cr \vdots&\vdots&0^{\star}&\vdots&\vdots&\vdots&\ddots&\vdots&\vdots
\cr c_{n}&0&\cdots&0&0&0&\cdots&1&1
\cr c_{\overline{0}}&0&\cdots&0&0&0&\cdots&0&0
}\,,\]
followed by the transformation 
\begin{equation}\label{eq:transfo3}
(q,c_0,c_{\overline{1}},\ldots,c_{\overline{n}})\mapsto(q^2,c_0q^{-1},c_{\overline{1}}q^{-2},\ldots,c_{\overline{n}}q^{-2})\,\cdot
\end{equation}
Here the part  $k_{c_0}$ for $\ep$ is transformed into $(2k-1)_{c_0}$ for $\ep'$, and the part $k_{c_{\overline{i}}}$ into $(2k-2)_{c_{\overline{i}}}$. Since $c_{\overline{0}}$ and $c_{i}$ are not modified, the part $k_{c}$ then becomes $(2k)_c$ for any $c \in \{c_{\overline{0}},c_i : i \in \{1,\ldots,n\}\}$.
\m
Applying \Thm{theo:flatreg1} to the flat partitions with ground $c_{\overline{0}}$ and energy $\ep$, and we obtain the generation function
\[
\sum_{\pi\in \Flt } C(\pi)q^{|\pi|}=\sum_{\pi\in \Reg } C(\pi)q^{|\pi|} = \frac{(-c_1q,-c_{\overline{1}}q,\ldots,-c_nq,-c_{\overline{n}}q;q)_{\infty}}{(c_{0}q;q)}\,\cdot
\]
In fact, by the definition of the energy $\ep$, one can view the partitions of $\Reg$ as the finite sub-sequences, ending with $0_{c_{\overline{0}}}$, of the infinite sequence 
\[\cdots\od 3_{c_{\overline{n}}}\od 2_{c_{n}}\od\cdots\od 2_{c_{\overline{n}}}\od 1_{c_n}\od \cdots\od 1_{c_1}\od 1_{c_0}\od 1_{c_{\overline{1}}}\od \cdots\od 1_{c_{\overline{n}}}\od 0_{c_{\overline{0}}}\cdot\]
with the parts $k_{c_{0}}$ possibly repeated.
Using \eqref{eq:transfo3}, we then have that the flat partitions with ground $c_{\overline{0}}$ and energy $\ep'$ are  generated by the function
\[
\frac{(-c_1q^2,-c_{\overline{1}},\ldots,-c_nq^2,-c_{\overline{n}};q^2)_{\infty}}{(c_{0}q;q^2)}\,\cdot
\]
By \Thm{theo:formchar}, \eqref{eq:weightD} and the fact that $c_0=1$ with the convention of \Thm{theo:formchar}, we obtain the formula for the character corresponding to $\Lambda_n$.
\subsection{Case of affine type $B_{n}^{(1)}(n\geq 3)$}
The crystal graph of the vector representation $\B$ of  $B_{n}^{(1)}(n\geq 3)$ is the following,
\begin{fig}
\[\]
\begin{center}
\begin{tikzpicture}[scale=0.6, every node/.style={scale=0.6}]
\draw (-4,0) node{$\B$ :};
\draw (-4,-0.75) node{$\p_{\Ll_n}=(\cdots\,\overline{0}\,\overline{0}\,\overline{0}\,\overline{0})$};
\draw (-4,-1.5) node{$\p_{\Ll_1}=(\cdots\,1\,\overline{1}\,1\,\overline{1}\,1)$};
\draw (-4,-2.25) node{$\p_{\Ll_0}=(\cdots\,\overline{1}\,1\,\overline{1}\,1\,\overline{1})$};
\draw (8.25,-0.75) node {$\overline{0}$};
\draw (0,0) node {$1$};
\draw (1.5,0) node {$2$};
\draw (5,0) node {$n-1$};
\draw (6.75,0) node {$n$};
\draw (3.15,0) node {$\cdots$};
\draw (0,-1.5) node {$\overline{1}$};
\draw (1.5,-1.5) node {$\overline{2}$};
\draw (5,-1.5) node {$\overline{n-1}$};
\draw (6.75,-1.5) node {$\overline{n}$};
\draw (3.15,-1.5) node {$\cdots$};

\draw (-0.2+8.25,0.2-0.75)--(0.2+8.25,0.2-0.75)--(0.2+8.25,-0.2-0.75)--(-0.2+8.25,-0.2-0.75)--(-0.2+8.25,0.2-0.75);
\foreach \x in {0,1,4.5} 
\foreach \y in {0,1}
\draw (-0.2+1.5*\x,0.2-1.5*\y)--(0.2+1.5*\x,0.2-1.5*\y)--(0.2+1.5*\x,-0.2-1.5*\y)--(-0.2+1.5*\x,-0.2-1.5*\y)--(-0.2+1.5*\x,0.2-1.5*\y);
\foreach \x in {0} 
\foreach \y  in {0,1}
\draw (-0.45+5+2*\x,0.2-1.5*\y)--(0.45+5+2*\x,0.2-1.5*\y)--(0.45+5+2*\x,-0.2-1.5*\y)--(-0.45+5+2*\x,-0.2-1.5*\y)--(-0.45+5+2*\x,0.2-1.5*\y);

\draw [thick, <-] (0.05,-0.25)--(1.45,-1.25);
\draw [thick, <-] (1.45,-0.25)--(0.05,-1.25);
\draw (0.3,-0.9) node {\footnotesize{$0$}};
\draw (1.2,-0.9) node {\footnotesize{$0$}};

\draw [thick, ->] (0.3,0)--(1.2,0);
\draw [thick, <-] (0.3,-1.5)--(1.2,-1.5);
\draw (0.75,0.15) node {\footnotesize{$1$}};
\draw (0.75,-1.67) node {\footnotesize{$1$}};
\draw [thick, ->] (1.8,0)--(2.7,0);
\draw [thick, <-] (1.8,-1.5)--(2.7,-1.5);
\draw (2.25,0.15) node {\footnotesize{$2$}};
\draw (2.25,-1.67) node {\footnotesize{$2$}};
\draw [thick, ->] (3.6,0)--(4.5,0);
\draw [thick, <-] (3.6,-1.5)--(4.5,-1.5);
\draw (4.05,0.15) node {\footnotesize{$n-2$}};
\draw (4.05,-1.67) node {\footnotesize{$n-2$}};
\draw [thick, ->] (5.6,0)--(6.4,0);
\draw [thick, <-] (5.6,-1.5)--(6.4,-1.5);
\draw (6,0.15) node {\footnotesize{$n-1$}};
\draw (6,-1.67) node {\footnotesize{$n-1$}};
\draw [thick, ->] (7.05,-0.05)--(8.2,-0.5);
\draw [thick, <-] (7.05,-1.45)--(8.2,-1);
\draw (7.6,-0.1) node {\footnotesize{$n$}};
\draw (7.6,-1.4) node {\footnotesize{$n$}};

\end{tikzpicture}
\end{center}
\end{fig}
with $\wt (\overline{0})=0$ and for all $u\in \{1,\ldots,n\}$,
\begin{equation}\label{eq:weightB}
-\wt (\overline{u}) = \wt u = \sum_{i=u}^{n} \alpha_i\,\cdot
\end{equation}
Here, we have $\delta = \alpha_0+\alpha_1+2\sum_{i=2}^{n} \alpha_i$.
We thus obtain the following crystal graph for $\B\ot \B$ 
\begin{fig}
\[\]
\begin{center}
\begin{tikzpicture}[scale=0.5, every node/.style={scale=0.5}]
\draw [thick,->] (-10,-1)--(-9,-1);
\draw (-8.8,-1) node[right] {: $0$-arrow};
\draw [->>] (-10,-2)--(-9,-2);
\draw (-8.8,-2) node[right] {: $1$-arrow};
\draw [->] (-10,-3)--(-9,-3);
\draw (-8.8,-3) node[right] {: $n$-arrow};
\draw [dashed,->] (-10,-4)--(-9,-4);
\draw (-8.8,-4) node[right] {: paths of $i$-arrows, for consecutive $i\neq 0,1,n$};
\draw [blue] (-10,-4.6)--(-9,-4.6)--(-9,-5.4)--(-10,-5.4)--(-10,-4.6);
\draw (-8.8,-5) node[right] {: connected component of $1\ot \overline{1}$};
\draw [dashed,red] (-10,-5.6)--(-9,-5.6)--(-9,-6.4)--(-10,-6.4)--(-10,-5.6);
\draw (-8.8,-6) node[right] {: connected component of $1\ot 2$};
\draw [thick,foge] (-10,-6.6)--(-9,-6.6)--(-9,-7.4)--(-10,-7.4)--(-10,-6.6);
\draw (-8.8,-7) node[right] {: connected component of $1\ot 1$};
\foreach \x in {0,...,7} 
\foreach \y in {0,...,7}
\draw (-0.2-0.4+3*\x,0.2-1.5*\y)--(0.2-0.4+3*\x,0.2-1.5*\y)--(0.2-0.4+3*\x,-0.2-1.5*\y)--(-0.2-0.4+3*\x,-0.2-1.5*\y)--(-0.2-0.4+3*\x,0.2-1.5*\y);
\foreach \x in {0,...,7} 
\foreach \y in {0,...,7}
\draw (-0.2+0.4+3*\x,0.2-1.5*\y)--(0.2+0.4+3*\x,0.2-1.5*\y)--(0.2+0.4+3*\x,-0.2-1.5*\y)--(-0.2+0.4+3*\x,-0.2-1.5*\y)--(-0.2+0.4+3*\x,0.2-1.5*\y);

\foreach \x in {0,...,7} 
\foreach \y in {0,...,7}
\draw (3*\x,-1.5*\y) node {$\ot$};

\foreach \x in {0,...,7} 
\draw (0.4+3*\x,0) node {$0$};
\foreach \y in {0,...,7} 
\draw (-0.4,-1.5*\y) node {$0$};
\foreach \x in {0,...,7} 
\draw (0.4+3*\x,-10.5) node {$0$};
\foreach \y in {0,...,7} 
\draw (-0.4+21,-1.5*\y) node {$0$};

\foreach \x in {0,...,7}
\draw (0.4+3*\x,-1.5) node {$\overline{n}$};
\foreach \y in {0,...,7} 
\draw (-0.4+3,-1.5*\y) node {$\overline{n}$};

\foreach \x in {0,...,7}
\draw (0.4+3*\x,-3) node {$\overline{2}$};
\foreach \y in {0,...,7} 
\draw (-0.4+6,-1.5*\y) node {$\overline{2}$};

\foreach \x in {0,...,7}
\draw (0.4+3*\x,-4.5) node {$\overline{1}$};
\foreach \y in {0,...,7} 
\draw (-0.4+9,-1.5*\y) node {$\overline{1}$};

\foreach \x in {0,...,7}
\draw (0.4+3*\x,-6) node {$1$};
\foreach \y in {0,...,7} 
\draw (-0.4+12,-1.5*\y) node {$1$};

\foreach \x in {0,...,7}
\draw (0.4+3*\x,-7.5) node {$2$};
\foreach \y in {0,...,7} 
\draw (-0.4+15,-1.5*\y) node {$2$};

\foreach \x in {0,...,7}
\draw (0.4+3*\x,-9) node {$n$};
\foreach \y in {0,...,7} 
\draw (-0.4+18,-1.5*\y) node {$n$};

\foreach \x in {0,...,5,7} 
\draw [->] (3*\x,-0.25)--(3*\x,-1.25);
\foreach \x in {1,...,5} 
\draw [->] (3*\x,-9.25)--(3*\x,-10.25);

\foreach \x in {0,2,3,4,6,7} 
\draw [dashed,->] (3*\x,-1.75)--(3*\x,-2.75);
\foreach \x in {0,2,3,4,6,7} 
\draw [dashed,->] (3*\x,-7.75)--(3*\x,-8.75);

\foreach \x in {2,4}
\foreach \y in {0,1,2,4,6,7}
\draw [->>] (0.9+3*\x,-1.5*\y)--(2.1+3*\x,-1.5*\y);

\foreach \x in {2,3}
\foreach \y in {0,1,2,3,6,7}
\draw [thick,->] (0.65+3*\x,0.25-1.5*\y) arc (110:70:7);

\foreach \y in {2,4}
\foreach \x in {0,1,3,5,6,7}
\draw [->>] (3*\x,-0.25-1.5*\y)--(3*\x,-1.25-1.5*\y);

\foreach \y in {2,3}
\foreach \x in {0,1,4,5,6,7}
\draw [thick,->] (0.65+3*\x,-0.25-1.5*\y) arc (20:-20:3.7);

\foreach \y in {2,...,6} 
\draw [->] (0.9,-1.5*\y)--(2.1,-1.5*\y);
\foreach \y in {0,2,3,4,5,6,7} 
\draw [->] (18.9,-1.5*\y)--(20.1,-1.5*\y);

\foreach \y in {0,1,3,4,5,7} 
\draw [dashed,->] (3.9,-1.5*\y)--(5.1,-1.5*\y);
\foreach \y in {0,1,3,4,5,7} 
\draw [dashed,->] (15.9,-1.5*\y)--(17.1,-1.5*\y);

\draw [blue] (-0.7+12,0.25-4.5)--(0.7+12,0.25-4.5)--(0.7+12,-0.25-4.5)--(-0.7+12,-0.25-4.5)--(-0.7+12,0.25-4.5);

\draw [dashed,red] (-0.7+12,0.25-1.5)--(0.7+21,0.25-1.5)--(0.7+21,-0.25-4.5)--(-0.7+15,-0.25-4.5)--(-0.7+12,-0.25-3)--(-0.7+12,0.25-1.5);
\draw [dashed,red] (-0.7+3,1-3)--(0.7+6,0.25-4.5)--(0.7+6,-0.25-4.5)--(-0.7+3,-0.25-4.5)--(-0.7+3,1-3);
\draw [dashed,red] (-0.7+12,0.25-7.5)--(0.7+12,0.25-7.5)--(0.7+18,0.25-10.5)--(0.7+21,0.25-10.5)--(0.7+21,-0.25-10.5)--(-0.7+12,-0.25-10.5)--(-0.7+12,0.25-7.5);

\draw [thick,foge] (-0.7+3,0.25-6)--(0.7+9,0.25-6)--(0.7+9,-0.25-10.5)--(-0.7+3,-0.25-10.5)--(-0.7+3,0.25-6);
\draw [thick,foge] (-0.7+3,0.25-1.5)--(0.7+9,0.25-1.5)--(0.7+9,-0.25-4.5)--(-0.7+9,-0.25-4.5)--(-0.7+3,-0.25-1.5)--(-0.7+3,0.25-1.5);
\draw [thick,foge] (-0.7+12,0.25-6)--(0.7+21,0.25-6)--(0.7+21,-0.25-9)--(-0.7+18,-0.25-9)--(-0.7+12,-0.25-6)--(-0.7+12,0.25-6);
\end{tikzpicture}
\end{center}
\end{fig}
Here, the only suitable ground to apply \Thm{theo:formchar} is $c_{\overline{0}}$.
We then consider $\C=\{c_1,\ldots,c_n,c_{\overline{n}},\ldots,c_{\overline{1}},c_{\overline{0}}\}$, and by setting $\ep'(c_u,c_v)=H(v\ot u)$ and $H(\overline{0}\ot \overline{0})=0$, we obtain the following energy matrix for $\ep'$:
\[
\bordermatrix{
\text{}&c_{\overline{n}}&\cdots&c_{\overline{2}}&c_{\overline{1}}&c_{1}&c_2&\cdots&c_{n}&c_{\overline{0}}
\cr c_{\overline{n}} &1&\cdots&1&1&0&0&\cdots&0&0
\cr \vdots&0&\ddots&\vdots&\vdots&\vdots&\vdots&0^{\star}&\vdots&\vdots
\cr c_{\overline{2}}&0&\cdots&1&1&0&0&\cdots&0&0
\cr c_{\overline{1}}&0&\cdots&0&1&-1&0&\cdots&0&0
\cr c_1&1&\cdots&1&1&1&1&\cdots&1&1
\cr c_2&1&\cdots&1&1&0&1&\cdots&1&1
\cr \vdots&\vdots&1^{\star}&\vdots&\vdots&\vdots&\vdots&\ddots&\vdots&\vdots
\cr c_n&1&\cdots&1&1&0&0&\cdots&1&1
\cr c_{\overline{0}}&1&\cdots&1&1&0&0&\cdots&0&0
}\,\cdot
\]
This energy matrix can be obtain by taking the energy matrix of $\ep$ defined by
\[
\bordermatrix{
\text{}&c_{\overline{n}}&\cdots&c_{\overline{2}}&c_{\overline{1}}&c_{1}&c_2&\cdots&c_{n}&c_{\overline{0}}
\cr c_{\overline{n}} &1&\cdots&1&1&1&1&\cdots&1&1
\cr \vdots&0&\ddots&\vdots&\vdots&\vdots&\vdots&1^{\star}&\vdots&\vdots
\cr c_{\overline{2}}&0&\cdots&1&1&1&1&\cdots&1&1
\cr c_{\overline{1}}&0&\cdots&0&1&0&1&\cdots&1&1
\cr c_1&0&\cdots&0&0&1&1&\cdots&1&1
\cr c_2&0&\cdots&0&0&0&1&\cdots&1&1
\cr \vdots&\vdots&0^{\star}&\vdots&\vdots&\vdots&\vdots&\ddots&\vdots&\vdots
\cr c_n&0&\cdots&0&0&0&0&\cdots&1&1
\cr c_{\overline{0}}&0&\cdots&0&0&0&0&\cdots&0&0
}\,,
\]
followed by the transformation 
\begin{equation}\label{eq:transfo4}
(q,c_{\overline{1}},\ldots,c_{\overline{n}})\mapsto(q,c_{\overline{1}}q^{-1},\ldots,c_{\overline{n}}q^{-1})\,\cdot
\end{equation}
Here the part $k_{c_{\overline{i}}}$ for $\ep$  is transformed into $(k-1)_{c_{\overline{i}}}$ for $\ep'$. The other parts $k_{c}$ remain unchanged.
By setting the ground $\co = c_{\overline{0}}=1$, we can apply \Thm{theo:flatreg1} to the flat partitions generated by $\ep$, and we obtain the generation function
\[
\sum_{\pi\in \Flt } C(\pi)q^{|\pi|}=\sum_{\pi\in \Reg } C(\pi)q^{|\pi|} = \frac{(-c_1q,-c_{\overline{1}}q,\ldots,-c_nq,-c_{\overline{n}}q;q)_{\infty}}{(c_1c_{\overline{1}}q^2;q^2)_\infty}\,\cdot
\]
In fact, by the definition of the energy $\ep$, one can view the partitions of $\Reg$ as the finite sub-sequences, ending with $0_{c_{\overline{0}}}$, of the infinite sequence 
\[\cdots\od 3_{c_{\overline{n}}}\od 2_{c_{n}}\od\cdots\od 2_{c_{\overline{n}}}\od 1_{c_n}\od \cdots \od 1_{c_2}\od 1_{c_{\overline{1}}}\od 1_{c_1}\od 1_{c_{\overline{1}}}\od 1_{c_{\overline{2}}}\od \cdots\od 1_{c_{\overline{n}}}\od 0_{c_{\overline{0}}}\,,\]
with the additional condition that we have possibly alternating sub-sequences of the form
\[\cdots \od k_{c_1}\od k_{c_{\overline{1}}}\od k_{c_1}\od k_{c_{\overline{1}}}\od\cdots \cdot\]
By reasoning on the parity of the length and the first element, the generating function of such alternating sequences for a fixed size $k$, possibly empty or reduced to one element,  is equal to 
\[\frac{(1+c_1q^k)(1+c_{\overline{1}}q^k)}{1-c_1c_{\overline{1}}q^{2k}}\,\cdot\] 
Using \eqref{eq:transfo4}, we then have that the flat partitions with ground $\co$ and energy $\ep'$ are  generated by the function
\[
\frac{(-c_1q,-c_{\overline{1}},\ldots,-c_nq,-c_{\overline{n}};q)_{\infty}}{(c_1c_{\overline{1}}q;q^2)_\infty}\,\cdot
\]
Using \Thm{theo:formchar} and \eqref{eq:weightB}, we obtain the formula for the character corresponding to $\Lambda_n$ in \Thm{theo:char3}.
\section{Bijective proof of \Thm{theo:flatreg1}}\label{sect:deg1}
We build in this section a bijection $\Omega_1$ between the set $\Flt$ and $\Reg$ of \Thm{theo:flatreg1}. In the following, we illustrate $\Omega_1$ with the set of colors $\C=\{a,b,c\}$, the ground $c$, and the energy $\ep$ defined by the energy matrix
\[M_\ep =
\bordermatrix{
\text{}&a&b&c
\cr a&1&0&1
\cr b&0&0&1
\cr c&0&0&0
}\,\cdot
\]
\subsection{The setup}
Recall that $\delta_g$ be the common value of the $\ep(\co,c)$ for $c\neq \co$ given by \eqref{eq:pos0}. Note that for any $c\neq \co$, for any $k,l \in \Z$
 \begin{align}\label{eq:order}
  k_{c}\not\od l_{\co} &\Longleftrightarrow k-l\leq \ep(c,\co)-1 \nonumber\\
  &\Longleftrightarrow l-k\geq 1-\ep(c,\co)\nonumber\\
  &\Longleftrightarrow l-k\geq \ep(\co,c)\nonumber\\
  &\Longleftrightarrow l_{\co}\od k_{c}\\
 \end{align}
so that \textit{the parts with color $\co$ can be always related in terms of $\od$ with the parts with color different from $\co$.} 
 \bi 
One can see the classical integer partitions as the non-increasing sequences of non-negative integers, with all but a finite number of parts equal to $0$. 
 \m Let us recall the conjugacy on classical partitions.
The partitions $\nu = (\nu_i)_{i=0}^{\infty}$ and $\nu'=(\nu'_i)_{i=0}^{\infty}$ are conjugate if and only if  their part sizes satisfy 
 \begin{equation}\label{eq:conj}
 \nu_i = |\{\nu'_j\geq i+1\}| 
\end{equation}
The conjugacy is an involution, and we then have $\nu'_i = |\{\nu_j\geq i+1\}|$.
\subsubsection{The set $\Reg$}
Identify a partition $\pi= (\pi_0,\ldots,\pi_{s-1},0_{\co})$ of $\Reg$  as the unique pair of partitions 
 \[(\mu,\nu)=[(\mu_0,\ldots,\mu_{s-1},0_{c_0}),(\nu_0,\ldots,\nu_{s-1})]\,,\]
 such that $C(\pi)=C(\mu)=c_{0}\cdots c_{s-1}\co$, and for all $k\in \ssss$, we have $c_k\neq \co$,
 $$\mu_k = \left(\sum_{l=k}^{s-1}\ep(c_k,c_{k+1})\right)_{c_k}\quad\text{and}\quad \nu_k = |\pi_k|-|\mu_k|\,\cdot$$
 The partition $\mu$ is then the unique element in $\Flt\cap \Reg$ satisfying $C(\pi)=C(\mu)=c_{0}\cdots c_{s-1}\co$, and $\nu$ is the \textit{residual} partition with $s$ parts, possibly ending with some parts equal to $0$. The partition $\nu$ then corresponds to a unique classical partition, with at most $s$ parts. 
 \begin{ex}\label{ex:bij1}
 The partition $$\pi=(10_a,8_a,8_b,7_b,5_a,4_a,3_a,2_b,1_a,1_b,1_b,0_c)$$
 is identify with the pair $(\mu,\nu)$ with 
 $$\mu = (4_a,3_a,3_b,3_b,3_a,2_a,1_a,1_b,1_a,1_b,1_b,0_c)$$
 and 
 $$\nu=(6,5,5,4,2,2,2,1,0,0,0)\,\cdot$$
 \end{ex}
 \bi 
We now fix $C=c_{0}\cdots c_{s-1}$. The partition $\mu$ in the pair then becomes fixed. By considering the set of regular partitions in $\Reg$ with a color sequence in $C\co$, we have the bijection
  \begin{equation}
  \Reg(C)=\{\pi\in \Reg: C(\pi)=C\co\}\approx \{\mu\}\times\{(\nu_0,\cdots,\nu_{s-1})\in \Z_{\geq 0}: \nu_0\geq \cdots \geq \nu_{s-1}\}\,\cdot
 \end{equation}
The set $\Reg(C)$ is then isomorphic to the set of classical partitions with at most $s$ positive parts. 
\m
We now consider the set of the \textit{descents}
 \begin{equation}
  D=\{k:\sss: \ep(c_{k-1},c_k)=0\}=\{k_0<\cdots<k_{|D|-1}\}\quad \text{and}\qquad\overline{D} = \ssss  \setminus D\,\,\cdot
 \end{equation}
 Note that, since $\ep(c_{s-1},\co)=1-\delta_g$, we recursively have for all $k\in \ssss$ that
 \begin{equation}\label{eq:mu}
  |\mu_k|= \sum_{l=k}^{s-1}\ep(c_l,c_{l+1}) = 1-\delta_g+|\{k+1,\cdots,s-1\}\cap \overline{D}|\leq s-k -\delta_g\,\cdot
 \end{equation}
 With \Expl{ex:bij1}, $C=aabbaaababb$, $s=11$, $D=\{2,3,4,7,8,9,10\}$ and $\overline{D}=\{0,1,5,6\}$.
 \bi 
 For a fixed non-negative $n$, we construct the bijection $\Omega$ in such a way that the partitions $\pi$ in $\Reg$ satisfying $(|\pi|,C(\pi))=(n,C\co)$ map to the 
 partitions $\pi$ in $\Flt$ which satisfy $(n,C)=(|\pi|,C(\pi)_{|\co=1})$. The latter means that the sequence of colors different from $\co$ 
 is equal to $C$.
\subsubsection{The set $\Flt$}
We now consider the set $\Flt(C)$ of flat partitions $\pi$ in $\Flt$ such that $C(\pi)_{|c_0=1}=C$. For such a partition $\pi$, there then exists a unique set $S=\{u_0<\cdots<u_{s-1}\}\subset \Z_{\geq 0}$ such that 
 \begin{equation}
 \begin{array}{ccl}
  \pi &=& (\pi_{0},\cdots,\pi_{u_{s-1}},0_{c_0})\,,\\
  c(\pi_{u_k})&=&c_{k} \quad \forall \,\,k\in \{0,\ldots,s-1\}\,,\\
  c_{\pi_k}&=&\co \quad \forall \,\,k\in\{0,\ldots,u_{s-1}\}\setminus S \,\cdot
 \end{array}
 \end{equation}
 In fact, we cannot have $c(\pi_{u_{s-1}})=\co$, otherwise $\pi_{u_{s-1}} = \ep(\co,\co) = 0$, so that $\pi_{u_{s-1}} = 0_{\co}$, and this contradicts the definition of grounded partitions. Let us set 
\begin{align}
 s'&= u_{s-1}+1-s\nonumber\\ W &= \{0\leq v <|D|: u_{k_v}-u_{k_v-1}>1\}= \{v_0<\cdots<v_{|W|-1}\}\,,\label{eq:ww}\\ 
 D_{W} &= \{k_v: v\in W\}\,,\nonumber\\
 D_{\overline{W}} &= D\setminus D_W\,\cdot\nonumber
\end{align}
If we have some parts with color $\co$ between $u_{k}$ and $u_{k+1}$, 
 their sizes' differences gives
 \begin{align*}
  \ep(c_k,\co)+\underbrace{0+\cdots+0}_{\sharp\text{parts inserted}-1}+\ep(\co,c_{k+1}) &= \ep(c_k,\co)+\ep(\co,c_{k+1})\\\
  &=1-\delta_g+\delta_g\\
  &=1
 \end{align*}
 and it differs from $\ep(c_k,c_{k+1})$ if and only if $k+1\in D$.
 We then obtain that 
 \[
 |\pi_{u_k}| = |\mu_k| + |\{k+1,\cdots,s-1\}\cap D_W|\,,
 \]
 so that by \eqref{eq:mu}. Since $|\pi_{u_{s-1}}|=1-\delta_g$, we obtain recursively that for all $k\in\{0,\ldots,s-2\}$,
 \begin{equation}\label{eq:piu}
  |\pi_{u_k}|=1-\delta_g+|\{k+1,\ldots,s-1\}\cap (\overline{D}\sqcup D_{W})|\,\cdot                       
 \end{equation}
 Note that by \eqref{eq:piu}, for all $u_{k-1}< u <u_k$, we necessarily have that $k\in \overline D \sqcup D_W$, and then 
 \[
 |\pi_{u}|  = \delta_g + |\pi_{u_k}| = |\{k,\ldots,s-1\}\cap (\overline{D}\sqcup D_{W})|\,\cdot
 \]
\bi
 \subsection{The map $\Omega$ from $\Flt(C)$ to $\Reg(C)$.}
 For any partition $\pi\in  \Flt(C)$ described above, we associate the conjugate $\nu$ of the classical partition $\nu'$ whose parts are:
 \begin{enumerate}
  \item for $k\notin D$, the $u_k-u_{k-1}-1$ parts between $u_{k-1}$ and $u_k$ with size 
  \[|\pi_u| = |\{k,\ldots,s-1\}\cap (\overline{D}\sqcup D_{W})|\,,\]
  which the convention $u_{-1}=-1$.
  \item for $k\in D_W$, we take $u_k-u_{k-1}-2$ parts between $u_{k-1}$ and $u_k$ with size 
  \[|\pi_u| =|\{k,\ldots,s-1\}\cap (\overline{D}\sqcup D_{W})|\,,\]
 and one part (called the \textit{weighted} part) with size
 \begin{equation}\label{eq:reci}
 |\pi_u|+k =|\{k,\ldots,s-1\}\cap (\overline{D}\sqcup D_{W})|+k \,\cdot
 \end{equation}
 \end{enumerate}
\bi
\begin{ex}
For example, we illustrate these transformations with $C=aabbaaababb$ and
\[\pi=(6_a,5_a,5_b,4_{c},4_{c},4_{c},4_b,4_a,3_{c},3_a,2_a,1_{c},1_{c},1_b,1_a,1_b,1_b,0_c)\,\cdot\]
Recall that $\mu = (4_a,3_a,3_b,3_b,3_a,2_a,1_a,1_b,1_a,1_b,1_b,0_c)$, $D=\{2,3,4,7,8,9,10\}$ and $\overline{D}=\{0,1,5,6\}$.
Here we have that 
\[
\begin{array}{|c|ccccccccccc|}
 \hline
 k&0&1&2&3&4&5&6&7&8&9&10\\
 \hline
 u_k&0&1&2&6&7&9&10&13&14&15&16\\
 \hline
 \end{array}
\]
and then 
$D_W=\{3,7\}$. We thus obtain that $\nu'$ is the classical partition with parts $3,4,4,7$ and $1,8$. We thus have $\nu'=(8,7,4,4,3,1)$
and the conjugacy then gives the following partition with $11$ parts
\[\nu = (6,5,5,4,2,2,2,1,0,0,0)\,\cdot\] 
By adding the parts of $\nu$ to the corresponding parts of $\mu$, we finally obtain
\[
\Omega(6_a,5_a,5_b,4_{c},4_{c},4_{c},4_b,4_a,3_{c},3_a,2_a,1_{c},1_{c},1_b,1_a,1_b,1_b,0_c)
=(10_a,8_a,8_b,7_b,5_a,4_a,3_a,2_b,1_a,1_b,1_b,0_c)\,\cdot
\]
\end{ex}
\bi 
We first note that the size of the partition is conserved by these transformations, since 
\begin{align*}
 \sum_{k=0}^{s-1}|\{k+1,\cdots,s-1\}\cap D_W| &= \sharp\{(k,l): l\in \{k+1,\cdots,s-1\}: l\in D_W\}\\
 &= \sum_{l\in D_W} \sharp\{0\leq k<l\}\\
 &= \sum_{l\in D_W}  l
\end{align*}
and then 
\begin{align*}
 \sum_{u=0}^{u_{s-1}} |\pi_u| &= \sum_{k=0}^{s-1}|\pi_{u_k}| + \sum_{\substack{k\\ 1<u_{k}-u_{k-1}}}(u_{k}-u_{k-1}-1)|\pi_{u_k-1}| \\
 &= \sum_{u\notin S} |\pi_u|+ \sum_{k=0}^{s-1} |\mu_k| + \sum_{l\in D_W}  l \\
 & = |\mu| + \sum_{l\in D_W}  l+|\pi_{u_l-1}| + (u_{l}-u_{l-1}-2)\pi_{u_l-1} + \sum_{l\notin D}(u_{l}-u_{l-1}-1)|\pi_{u_l-1}|\,\cdot
\end{align*}
\bi
The \textit{unweighted} parts are those which are not weighted. We then remark that for all $k\in \ssss$, 
\begin{align*}
 |\{k,\ldots,s-1\}\cap (\overline{D}\sqcup D_{W})|+k &= |\overline{D}\sqcup D_{W}|+ |\{0,\ldots,k-1\}\cap D_{\overline{W}}| \\
 &= |\pi_{u_0}|+\delta_g + |\{0,\ldots,k-1\}\cap D_{\overline{W}}|
\end{align*}
so that the weighted parts have all sizes greater than or equal to the sizes of the unweighted parts. We also notice that unweighted parts coming from different $k$ are distinct, 
since the sizes' difference gives
\begin{align*}
|\{k,\ldots,s-1\}\cap (\overline{D}\sqcup D_{W})|-|\{k+1,\ldots,s-1\}\cap (\overline{D}\sqcup D_{W})| = \chi(k\in \overline{D}\sqcup D_{W})
\end{align*}
and this is exactly the condition required to \textit{insert} a part in $\nu'$. Also when we take two consecutive weighted parts in $k_{v_j}<k_{v_{j+1}}\in D_W$, we obtain the difference of size
\begin{align*}
k_{v_{i}}-k_{v_{i+1}}+|\{k_{v_i},\ldots,k_{v_{i+1}}-1\}\cap (\overline{D}\sqcup D_{W})| = -|\{k_{v_i},\ldots,k_{v_{i+1}}-1\}\cap D_{\overline{W}}|
\end{align*}
so that the weighted parts appear in a non-decreasing order according to the indices $i$ in $\{0,|W|-1\}$. We then obtain 
$\nu' = (\nu'_{0},\cdots,\nu'_{s'-1})$, where
we have for all $i\in \{0,\ldots,|W|-1\}$ 
\begin{align*}
 \nu'_{|W|-1-i} &= |\{k_{v_i},s-1\}\cap (\overline{D}\sqcup D_{W})|+k_{v_i}\\
  &= s- |D_{\overline{W}}\cap\{k_{v_i},\ldots,s-1\}|\\
  &= s- |\{v_i\leq p<|D|: p\notin W\}|\quad\text{by \eqref{eq:ww}}\\
  &= s+|W|-|D|+ v_i-i\\
  &\leq s\,,
\end{align*}
and the rest of the parts consists of $u_k-u_{k-1}-1-\chi(k\in D_W)$ parts 
for $k\in \overline{D}\sqcup D_{W}$, each of them with size 
\[
 |\{k,s-1\}\cap (\overline{D}\sqcup D_{W})|\geq 1\,\cdot
\]
Note that $\nu'$ viewed as a classical partition has $s'$ parts with 
size at most equal to $s$, and by \eqref{eq:conj},
the partition $\nu$
then has at most $s$ positive parts and satisfies $\nu_0 = s'$. Our map from $\Flt(C)$ to $\Reg(C)$ is then well-defined.
\bi 
We conclude by observing the following equality: for all $i\in \{0,\ldots,|W|-1\}$ we have 
\begin{align}
 \nu'_{|W|-1-i}-|W|+i&= s-|D|+ v_i\nonumber\\
 &=|\{0,\ldots,k_{v_i}-1\}|+ |\{k_{v_i},s-1\}\cap \overline{D}|\nonumber\\
 &= \delta_g+\mu_{k_{v_i}}+ k_{v_i}\label{eq:retour1}\,,
 \end{align}
and for all $u\in \{|W|,\ldots,s'-1\}$, we have 
\begin{equation}
\nu'_{u}-u-1\leq \nu'_{|W|}-|W|-1<\delta_g+\mu_0\,\cdot\label{eq:retour2}
\end{equation}
\subsection{The map $\Omega^{-1}$ from $\Reg(C)$  to  $\Flt(C)$}
Let consider a partition $\pi$ in $\Reg(C)$, and the corresponding pair $(\mu,\nu)$.
The partition $\nu$ then corresponds to a classical partition with at most $s$ positive parts.
We first consider the partition $\nu'$ the conjugate of $\nu$ given by the relation \eqref{eq:conj}. The partitions $\nu'$ has then $\nu_0$ positive parts, whose sizes are at most equal to $s$.
Let us set $s'=\nu_0$ and write $\nu' = (\nu'_{0},\cdots,\nu'_{s'-1})$. 
We then apply the following transformations:
\begin{enumerate}
 \item For each $k\in \ssss$, we change the part $\mu_k$ into $\mu'_k$ with the relations 
 \begin{equation}
 \begin{cases}
 c(\mu'_k)= c(\mu_k) = c_{k}\\
  |\mu'_k|= |\mu_k| + |\{0\leq u<s': \delta_g+|\mu_k| +k \leq \nu'_{u}-u-1\}|
 \end{cases}\,\,\cdot
 \end{equation}
 \item For each $u\in \{0,\ldots,s'-1\}$, we change the part $\nu'_u$ into $\nu''_u$ with the relations 
 \begin{equation}
  \begin{cases}
 c(\nu''_u)= \co\\
  |\nu''_u|= \nu'_u - |\{0\leq k<s: \delta_g+|\mu_k| +k \leq \nu'_{u}-u-1\}|
 \end{cases}\,\,\cdot
 \end{equation}
\end{enumerate}
The final partition $\Omega^{-1}(\pi)$ is obtained by inserting the parts $\nu''_u$ into the sequence of parts $\mu'_k$ according $\od$, and adding the ground $0_{\co}$.
The partition $\Omega^{-1}(\pi)$ then has $s+s'$ parts different from $0_{\co}$ and by double counting, we have that $|\Omega^{-1}(\pi)|=|\mu|+|\nu| = |\pi|$.
\bi   
\begin{ex}
For example, we illustrate these transformations with $C=aabbaaababb$ and 
$$\pi=(10_a,8_a,8_b,7_b,5_a,4_a,3_a,2_b,1_a,1_b,1_b,0_c)\,,$$
corresponding to
$$\mu = (4_a,3_a,3_b,3_b,3_a,2_a,1_a,1_b,1_a,1_b,1_b,0_c)\,,$$
 and  
 $$\nu=(6,5,5,4,2,2,2,1,0,0,0)\,\cdot$$
 The conjugacy gives 
 $$\nu'=(8,7,4,4,3,1)$$
 Recall that we have $\delta_g=0$. Using the following tables
 \[
 \begin{array}{|c|ccccccccccc|}
 \hline
 k&0&1&2&3&4&5&6&7&8&9&10\\
 \hline
 \mu_k +k&4&4&5&6&7&7&7&8&9&10&11\\
 \hline
 \end{array}
\qquad\begin{array}{|c|cccccc|}
 \hline
 u&0&1&2&3&4&5\\
 \hline
 \nu'_u-u-1&7&5&1&0&-2&-5\\
 \hline
 \end{array}\,,\]
 we obtain that 
 \[\mu'=(6_a,5_a,5_b,4_b,4_a,3_a,2_a,1_b,1_a,1_b,1_b,0_c)\quad,\quad\nu''=(1_{c},4_{c},4_{c},4_{c},3_{c},1_{c})\]
and the insertion then gives 
\[\Omega^{-1}(\pi)=(6_a,5_a,5_b,4_{c},4_{c},4_{c},4_b,4_a,3_{c},3_a,2_a,1_{c},1_{c},1_b,1_a,1_b,1_b,0_c)\,\cdot\]
\end{ex}
\bi 
Let us now show that $\pi\in \Flt$.  First and foremost, note that 
\[\delta_g+|\mu_{s-1}| +s-1 = s\,,\]
and since $\nu'_u\leq s$ for all $u\in \{0,\ldots,s'-1\}$, we then obtain that $|\mu'_{s-1}| = |\mu_{s-1}|=1-\delta_g$. Besides,
 for all the $k\in \{0,\ldots,s-1\}$, we have that 
\begin{align*}
(\delta_g+|\mu_k| +k)-(\delta_g+|\mu_{k-1}| +k-1) &= 1+|\mu_k|-|\mu_{k-1}|\\
&=1-\ep(c_{k-1},c_k) \in\{0,1\}\,\cdot
\end{align*}
This means that the sequence $(\delta_g+|\mu_k| +k)_{k=0}^{s-1}$ is non-decreasing, and with the difference between consecutive terms at most equal to $1$, with equality if and only if $k\in D$.
\m On the other hand $u\in \{1,\ldots,s-1\}$, we have for all $u\in \{0,\ldots,s'-1\}$ that
\begin{align*}
\nu'_{u-1}-u-(\nu'_{u}-u-1) &= 1+\nu'_{u-1}-\nu'_u \geq 1\,\cdot
\end{align*}
The sequence $(\nu'_{u}-u-1)_{u=0}^{s'-1}$ is then decreasing.
\m Let us now set 
\[
 D_V = \{k\in \sss : |\mu'_{k-1}|-|\mu'_k| \neq |\mu_{k-1}|-|\mu_k|\}\,\cdot
\]
Since $\ep(c_{k-1},c_k)\in \{0,1\}$,
the set  $D_V$ then contains all the $k\in \{1,\cdots,s-1\}$ such that
\[
0<|\{0\leq u<s': \delta_g+|\mu_{k-1}| +k-1 \leq \nu'_{u}-u-1<\delta_g+|\mu_k| +k\}|\leq 1-\ep(c_{k-1},c_k)\,,
\]
so that $D_V \subset D$. For such $k$, there is a unique $u$ such that
\begin{equation}\label{eq:range}
\delta_g+|\mu_{k-1}| +k-1 \leq \nu'_{u}-u-1<\delta_g+|\mu_k| +k\,\cdot
\end{equation}
In fact, the sequence $(\nu'_{u}-u-1)_{u=0}^{s'-1}$ being decreasing, and the interval $[\delta_g+\mu_{k-1} +k-1,\delta_g+\mu_k +k)$, being a singleton for $k\in D_V$, contains at at most one element of the latter sequence. We also have that
\begin{align*}
|\{0\leq l<s: \delta_g+|\mu_l| +l \leq \nu'_{u}-u-1\}| &= |\{0,\ldots,k-1\}|=k\\
|\{0\leq v<s': \delta_g+|\mu_{k-1}| +k-1 \leq \nu'_{v}-v-1\}|&= |\{0,\ldots,u\}| = u+1\,\cdot
\end{align*}
Therefore, we have the following
\begin{align}
\nu'_{u} &= |\nu''_{u}|+k\,, \label{eq:recip}\\
|\mu'_{k}| &= |\mu_{k}|+u\,\nonumber\\
|\mu'_{k-1}| &=|\mu_{k-1}|+u+1\nonumber\,,
\end{align}
and by \eqref{eq:order} and \eqref{eq:range},
\begin{equation}\label{eq:insfor}
\mu'_{k-1}\gte\nu''_{u}\gte \mu'_{k}\,\cdot
\end{equation}
The part $\nu''_{u}$ is then inserted between $\mu'_{k-1}$ and $\mu'_{k}$. Note that this insertion occurs once for all $u$ such that 
$$|\overline{D}|=\delta_g+|\mu_0| \leq \nu'_{u}-u-1,$$
so that $$|D_V|=|\{0\leq k<s: \delta_g+|\mu_0|\leq \nu'_{u}-u-1\}|\,\cdot$$
\bi
Then, for all $u\geq |D_V|$, we have 
$$\nu'_{u}-u-1< \delta_g+|\mu_0|\,,$$
so that $\nu''_{u}=\nu'_u$. In particular, we have
\begin{equation}\label{eq:suppp}
 \nu''_{|D_V|}-|D_V|-1 < \delta_g+|\mu_0| \Longleftrightarrow  \nu''_{|D_V|}\leq \delta_g+|\mu_0| +|D_V|=\delta_g+|\mu'_0|\,\cdot
\end{equation}
We remark that
for all $k\in D\setminus D_{V}$, since $|\mu'_{k-1}|-|\mu'_{k}|= |\mu_{k-1}|-|\mu_{k}| =0$, the parts $\mu'_{k-1},\mu'_{k}$ then have the same size, and then the same relation with all parts with color $\co$. This means that, after inserting of the parts $\nu''_u$ into $\mu'$, we do not have any part between the parts $\mu'_{k-1}$ and $\mu'_{k}$. 
We also note that, for all $k\in \overline{D}\sqcup D_V$, $|\mu'_{k-1}|-|\mu'_{k}|=1$, so that one can insert any number of parts with color $\co$ and size $\delta_g+|\mu'_k|$, and since $\ep(\co,\co)=0$, these part with the same 
size and color $\co$ are well-related by $\gte$. 
\m These facts, along with \eqref{eq:insfor} and \eqref{eq:suppp}, imply that $\pi$ belongs to $\Flt$.
\bi We conclude by observing that, by \eqref{eq:insfor}, $D_V$ can be also defined as the unique subset of $D$ with satisfies the following: $k\in D$ belongs to $D_V$ if and only if there exists $u\in \{0,\cdots,s'\}$ such that $\mu'_{k-1}\gte\nu''_u\gte \mu'_k$.
\subsection{The maps are inverse each other}
Using \eqref{eq:retour1} and \eqref{eq:retour2}, we straightforward to observe by the definition of $\Omega^{-1}$ that $\Omega^{-1}\circ \Omega = Id_{\Flt(C)}$. On the other hand, the fact that $\Omega\circ \Omega^{-1} = Id_{\Reg(C)}$ comes from the correspondence between $D_W$ and $ D_V$. In fact, this correspondence is deduced from the equivalence between the definition of $W$ and the above definition of $D_V$. We also observe that the only parts whose size changes from one set of partitions to another are those related to the set $D_W$ and $D_V$. We finally conclude by observing the reciprocity between the definition of the weighted parts related to $D_W$ given in \eqref{eq:reci}, and the definition of the parts related to $D_V$  given by the formula \eqref{eq:recip}. 
\begin{rem}
The bijection built here gives a more refined property, as for a fixed color $C$ product of $s$ colors different from $\co$, it makes the correspondence between the partitions $\nu$ with at most $s$ parts such that the greatest part has size $s'$ and the flat partitions having $s'$ additional parts with colors $\co$ different from $0_{\co}$.
\end{rem}
\section{Bijective proof of \Thm{theo:flatreg2}}\label{sect:deg2}
In this section, we prove the following.
\begin{prop}\label{prop:same}
For a fixed color $C$ as product of colors different from $\co$ and a fixed non-negative integer $n$, the following sets of generalized partitions are equinumerous:
\begin{enumerate}
\item $\Fltt(C,n)=\{\pi\in \Fltt: C(\pi)_{|\co=1}=C,|\pi|=n\}$,
\item $\Flt(C,n)=\{\pi\in \Flt: C(\pi)_{|\co=1}=C,|\pi|=n\}$,
\item $\Reg(C,n)=\{\pi\in \Reg: C(\pi)_{|\co=1}=C ,|\pi|=n\}$,
\item $\Regg(C,n)=\{\pi\in \Regg: C(\pi)_{|\co=1}=C,|\pi|=n\}$.
\end{enumerate}
\end{prop}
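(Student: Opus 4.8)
The plan is to establish the fourfold equinumerosity by producing a spanning set of three bijections among the sets, so that it suffices to link $\Flt(C,n)$ with $\Reg(C,n)$, then $\Fltt(C,n)$ with $\Flt(C,n)$, and finally $\Reg(C,n)$ with $\Regg(C,n)$. The middle link is immediate: \Thm{theo:flatreg1} gives a bijection $\Omega$ between $\Flt$ and $\Reg$ preserving both the total energy (hence the size) and the sequence of colors different from $\co$, so it restricts to a bijection between the fibers $\Flt(C,n)$ and $\Reg(C,n)$. This is the only place the degree-one theorem enters, and it is exactly why the two remaining links only need to reduce degree two to degree one.

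For the link between $\Fltt(C,n)$ and $\Flt(C,n)$ I would use the \emph{unfolding} map. Given $\pi=(\sigma_0,\ldots,\sigma_{t-1},0_{\co^2})\in\Fltt$, replace each secondary part $\sigma_i$ by its two halves $\gamma(\sigma_i)$ and $\mu(\sigma_i)$ of \Def{def:secpar}, read in that order. Within one part one has $\gamma(\sigma_i)\gte\mu(\sigma_i)$ by construction (their sizes differ by exactly $\ep(c,c')$), while the computation \eqref{eq:flatcond2} shows that $\sigma_i\gtei\sigma_{i+1}$ is equivalent to $\mu(\sigma_i)\gte\gamma(\sigma_{i+1})$; hence the unfolded sequence is a primary flat partition. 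The ground $0_{\co^2}$ unfolds to the pair $0_{\co},0_{\co}$, and since $\ep(\co,\co)=0$ forces $0_{\co}\gte 0_{\co}$, collapsing the redundant trailing ground lands in $\Flt$. Size is preserved because $|\sigma_i|=|\gamma(\sigma_i)|+|\mu(\sigma_i)|$, and the color sequence is unchanged once $\co$ is set to $1$. The inverse folds a primary flat partition into consecutive pairs, padding with one trailing $0_{\co}$ when the parity of the length forces it; I would verify that this parity bookkeeping is the only delicate point and that it yields a well-defined two-sided inverse.

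The remaining link, between the primary and secondary \emph{regular} partitions, is where the main theorem of the first paper enters. After removing grounds, $\Reg(C,n)$ becomes a fiber of $\Odd^{\rho_+}$ with $\rho=1-\delta_g$ (the bound on the smallest part coming from $\pi_{s-1}\od 0_{\co}$ and $\ep(c,\co)=1-\delta_g$), while $\Regg(C,n)$ becomes an ungrounded set built from secondary parts. The bridge is \Thm{theo:degree2}, whose bijection between $\Odd$ and $\E$ is precisely the gluing of consecutive primary parts at minimal $\od$-difference into the secondary parts of \Def{def:secpar}. Reading \eqref{ss}, the relation $\odp$ on secondary parts is $k-k'\geq\ep(c',c'')+\ep(c'',c''')$, which is exactly the relation $\odg$ of \eqref{eq:diffcd2} when the correction $\delta^\ep$ vanishes. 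The energy $\ep'_2=\ep_2+2\delta^\ep$ is therefore defined so that $\Regg$ matches, part for part, the secondary content produced by \Thm{theo:degree2}, with the corrections \eqref{eq:always}, \eqref{eq:-1} and \eqref{eq:1} absorbing the effect of the halves whose color is the ground $\co$ (which unfold to internal ground-colored parts) and of the boundary relation with $0_{\co^2}$.

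The step I expect to be the main obstacle is this last one: reconciling the pure-secondary, grounded set $\Regg(C,n)$ with the mixed, ungrounded set $\E$ supplied by \Thm{theo:degree2}, and checking that the corrections $\delta^\ep$ in \eqref{eq:always}--\eqref{eq:1} are exactly those forced by ground-colored halves and by the two values of $\delta_g$. Concretely, I would track a secondary part carrying a single ground color, such as $c\co$ or $\co d'$, observe that it unfolds to one genuine part together with one internal $0_{\co}$-type part, and show that the degree-one insertion and removal of grounds — encoded by the sets $W$ and $D_V$ in the proof of \Thm{theo:flatreg1} — is compatible with the gluing of \Thm{theo:degree2}, the $\Odd^{\rho_+}$ and $\E^{\rho_+}$ boundary conditions lining up under $\rho=1-\delta_g$. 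Carrying out this compatibility case by case is the technical heart; once it is in place the four cardinalities coincide, and \Thm{theo:flatreg2} follows by composing the three links to identify $\Fltt(C,n)$ with $\Regg(C,n)$.
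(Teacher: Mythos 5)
Your overall architecture coincides with the paper's: the chain $\Fltt(C,n)\leftrightarrow\Flt(C,n)\leftrightarrow\Reg(C,n)\leftrightarrow\Regg(C,n)$, with the middle link given by \Thm{theo:flatreg1}, the first link given by unfolding each secondary part into its halves $\gamma,\mu$ (this is exactly the paper's map $\mathcal F$ of \Sct{sect:flat12}, including your parity bookkeeping for the inverse), and the last link routed through \Thm{theo:degree2} by stripping the ground so that $\Reg(C,n)$ becomes $\Odd^{\rho_+}(C,n)$ with $\rho=1-\delta_g$. Your first two links are correct and essentially identical to what the paper does.

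The gap is in the remaining step, the passage between $\E^{\rho_+}(C,n)$ and $\Regg(C,n)$. The paper's device (\Prp{prop:r}) is a map $\Rr$ that is the identity on secondary parts whose two colors lie in $\C'$, and that sends a primary part $k_c$, $c\in\C'$, to a secondary part of the \emph{same size}: $k_{c\co}$ when $k\equiv\rho\bmod 2$, and $k_{\co c}$ when $k\equiv 1-\rho\bmod 2$ (see \eqref{eq:rr}); this is well defined precisely because secondary parts of colors $c\co$ and $\co c$ have sizes of opposite parities. This parity recoloring is the whole content of the correspondence; the rest (\Lem{lem:4} and \Lem{lem:5}) is a purely local, case-by-case translation of $\odp$ into $\odg$, in which the corrections $\delta^\ep$ of \eqref{eq:always}--\eqref{eq:1} fall out of halving inequalities such as \eqref{eq:chi1}. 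Your plan replaces this with two ideas that do not work. First, you propose to unfold a secondary part of color $c\co$ into ``one genuine part together with one internal $0_{\co}$-type part'': but the lower half of $(k',c,\co)$ is $k'_{\co}$, a ground-colored part of generally positive size, and a sequence containing such parts lies in neither $\E^{\rho_+}$ (whose parts carry colors built only from $\C'$) nor $\Regg$, so no map is actually defined. Second, you invoke compatibility with the sets $W$ and $D_V$: that bookkeeping belongs to the degree-one flat/regular bijection $\Omega$ of \Thm{theo:flatreg1} and plays no role in the regular-side correspondence, where no ground parts are ever inserted or removed. So the missing idea is the size-preserving parity recoloring $\Rr$; once it is in place, the verification you correctly identify as the technical heart is the case analysis of \Lem{lem:5} (plus \Lem{lem:4} for the condition on the final part), not any interaction with the degree-one ground insertions.
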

In the previous section, we have shown in the proof of \Thm{theo:flatreg1} that $|\Flt(C,n)|=|\Reg(C,n)|$. In the following, we first show that there is a bijection between $\Fltt(C,n)$ and $\Flt(C,n)$, and after that we build the bijection between $\Reg(C,n)$ and $\Regg(C,n)$.
\subsection{Bijection between $\Fltt(C,n)$ and $\Flt(C,n)$}\label{sect:flat12}
Here we recall that, by \Def{def:flat2}, the partitions of $\Fltt$ have the form
$(\pi_0,\ldots,\pi_{s-1},0_{\co^2})$, such that for all $k\in \{0,\ldots,s-1\}$, we have $\pi_k\in \Sc$, and if we set $c(\pi_k)=c_{2k}c_{2k+1}\in \C^2$, we have by \eqref{eq:flatcond2} that
\begin{equation}\label{eq:flt2}
\mu(\pi_k)\gte\gamma(\pi_{k+1})\,\cdot
\end{equation}
We also observe that $c_{2s-2}c_{2s-1}\neq \co^2$, otherwise the latter equation gives  that
$|\pi_{s-1}|-|0_{\co^2}| = 4\ep(\co,\co)=0$, and then $\pi_{s-1} = 0_{\co^2}$, and this contradicts the definition of grounded partitions. Besides, we remark that $\mu(\pi_{s-1})=0_{\co}$ if and only if $c_{2s-1}=\co$.
\bi Let us consider the map $\mathcal F$ from $\Fltt$ to $\Flt$ defined by  
\begin{equation}
(\pi_0,\ldots,\pi_{s-1},0_{\co^2}) \mapsto \begin{cases}
(\gamma(\pi_0),\mu(\pi_0),\gamma(\pi_1),\mu(\pi_1),\ldots,\gamma(\pi_{s-2}),\mu(\pi_{s-2}),\gamma(\pi_{s-1}),0_{\co}) \ \ \ \ \ \ \ \ \ \ \ \ \ \ \text{if}\ \ c_{2s-1}=\co\\\\
(\gamma(\pi_0),\mu(\pi_0),\gamma(\pi_1),\mu(\pi_1),\ldots,\gamma(\pi_{s-2}),\mu(\pi_{s-2}),\gamma(\pi_{s-1}),\mu(\pi_{s-1}),0_{\co}) \ \ \text{if}\ \ c_{2s-1}\neq\co
\end{cases}\,\cdot
\end{equation}
It is easy to check that both the total energy and the sequence of primary colors are preserved. To show that $\mathcal F (\pi_0,\ldots,\pi_{s-1},0_{\co^2})\in \Flt$, we proceed according to whether $c_{2s-1}=\co$ or $c_{2s-1}\neq \co$. Note that by definition of the secondary parts, we have that for all $k\in \{0,\ldots,s-1\}$ that
\[
|\gamma(\pi_k)|-|\mu(\pi_k)|=\ep(c_{2k},c_{2k+1})\Longleftrightarrow \gamma(\pi_k)\gte \mu(\pi_k)\,\cdot
\]
\begin{itemize}
\item If  $c_{2s-1}=\co$, then the latter equation and \eqref{eq:flt2} give that $\mathcal F (\pi_0,\ldots,\pi_{s-1},0_{\co^2})$ is well-defined up to $\mu(\pi_{s-1})$,
 and with the fact that $c_{2s-2}\neq \co$ and  
$\mu(\pi_{s-1})=0_{\co}$, we obtain that $\mathcal F(\pi_0,\ldots,\pi_{s-1},0_{\co^2})\in \Flt$.
\item If  $c_{2s-1}\neq \co$, then the latter equation and \eqref{eq:flt2} give that $\mathcal F (\pi_0,\ldots,\pi_{s-1},0_{\co^2})$ is well-defined up to $\mu(\pi_{s-1})$, with the fact that $c_{2s-1}\neq \co$
and $\mu(\pi_{s-1})=\ep(c_{2s-1},\co)$, we obtain that $\mathcal F(\pi_0,\ldots,\pi_{s-1},0_{\co^2})\in \Flt$.
\end{itemize}
\bi The reciprocal map $\mathcal{F}^{-1}$ is even easier to build. We simply proceed as follows:
\begin{equation}
(\pi_0,\ldots,\pi_{s-1},0_{\co}) \mapsto \begin{cases}
(\pi_0+\pi_{1},\ldots,\pi_{s-1}+0_{\co},0_{\co^2}) \ \ \ \ \text{if}\ \ s \equiv 1 \mod 2\\\\
(\pi_0+\pi_{1},\ldots,\pi_{s-2}+\pi_{s-1},0_{\co^2}) \ \ \text{if}\ \ s \equiv 0 \mod 2
\end{cases}\,\cdot
\end{equation}
The primary parts being consecutive in terms of $\od$, the map $\mathcal{F}^{-1}$ is well-defined, and 
one can check that the first case of $\mathcal{F}^{-1}$ is reciprocal to the first case of $\mathcal{F}$, so as the second case of $\mathcal{F}^{-1}$ is reciprocal to the second case of $\mathcal{F}$. 
\subsection{Bijection between $\Reg(C,n)$ and $\Regg(C,n)$}\label{sect:oeground}
Let us recall that $\C'=\C\setminus \{\co\}$, and set $\Cc' = \{cc':c,c'\in \C'\}$. We now set $\rho = 1-\delta_g$ the common value of $\ep(c,\co)$ for all $c\in \C'$. Here we refer to $\Odd$ and $\E$ as the sets corresponding to the set $\C'$.
 We now show the following proposition. 
\begin{prop}\label{prop:regularity}
For a fixed color $C$ as product of colors in $\C'$ and a fixed non-negative integer $n$, the following sets of generalized partitions are equinumerous:
\begin{enumerate}
\item $\Reg(C,n)=\{\pi\in \Fltt: C(\pi)_{|\co=1}=C,|\pi|=n\}$,
\item $\Odd^{\rho_+}(C,n)=\{\pi\in \Odd^{\rho_+}: C(\pi)=C,|\pi|=n\}$,
\item $\E^{\rho_+}(C,n)=\{\pi\in \E^{\rho_+}: C(\pi)=C,|\pi|=n\}$,
\item $\Regg(C,n)=\{\pi\in \Regg: C(\pi)_{|\co=1}=C,|\pi|=n\}$.
\end{enumerate}
\end{prop}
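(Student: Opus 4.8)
The plan is to prove the four cardinalities equal by exhibiting a chain of three size- and color-preserving bijections
\[
\Reg(C,n)\ \longleftrightarrow\ \Odd^{\rho_+}(C,n)\ \longleftrightarrow\ \E^{\rho_+}(C,n)\ \longleftrightarrow\ \Regg(C,n).
\]
The middle link is exactly \Thm{theo:degree2} applied to the color set $\C'$: it already provides a bijection between $\Odd^{\rho_+}(C,n)$ and $\E^{\rho_+}(C,n)$ preserving both $n$ and $C$. Hence all the work lies in the two outer ``grounding'' maps $\Reg\leftrightarrow\Odd^{\rho_+}$ and $\E^{\rho_+}\leftrightarrow\Regg$.

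For the first outer link I would argue directly. A partition $\pi=(\pi_0,\ldots,\pi_{s-1},0_{\co})$ in $\Reg(C,n)$ has all non-ground colors in $\C'$, and since consecutive parts satisfy $\pi_k\od\pi_{k+1}$ the sizes $|\pi_0|\geq\cdots\geq|\pi_{s-1}|$ are non-increasing; the grounding relation $\pi_{s-1}\od 0_{\co}$ forces $|\pi_{s-1}|\geq\ep(c(\pi_{s-1}),\co)=\rho$, so every non-ground part lies in $\Pp^{\rho_+}$. Deleting the ground $0_{\co}$ therefore maps $\Reg(C,n)$ into $\Odd^{\rho_+}(C,n)$, and appending $0_{\co}$ to a sequence in $\Odd^{\rho_+}(C,n)$ recovers a grounded partition: the last part has color in $\C'$ (hence is $\neq 0_{\co}$), and by \eqref{eq:order} the relation to $0_{\co}$ holds precisely because its size is at least $\rho$. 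These are mutually inverse and preserve $n$ and $C$, giving $|\Reg(C,n)|=|\Odd^{\rho_+}(C,n)|$.

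The second outer link is the heart of the matter. A partition $\nu\in\E^{\rho_+}(C,n)$ has parts that are primary (color in $\C'$) or secondary (color a product of two colors of $\C'$), and I would turn it into a secondary regular partition by \emph{promoting} each primary part to a secondary part carrying the ground color $\co$ and then appending the secondary ground $0_{\co^2}$. As exactly one of $\rho,\delta_g$ equals $1$, every integer $k$ is congruent modulo $2$ to exactly one of them, and I promote a primary part $k_c$ to the unique secondary part of the \emph{same size}:
\[
k_c\ \longmapsto\
\begin{cases}
\bigl(\tfrac{k-\rho}{2},c,\co\bigr) & \text{if } k\equiv\rho \pmod 2,\\[2pt]
\bigl(\tfrac{k-\delta_g}{2},\co,c\bigr) & \text{if } k\equiv\delta_g \pmod 2,
\end{cases}
\]
leaving secondary parts unchanged. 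Since a secondary part of color $c\co$ has size $2\ell+\rho$ and one of color $\co c$ has size $2\ell+\delta_g$, this is a size-preserving bijection onto secondary parts with exactly one ground half; reading colors with $\co=1$ recovers $C$, every non-ground color is automatically $\neq\co^2$, and the inverse demotes each one-$\co$ secondary part to a primary part, the parity of whose size records which half was the ground.

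The main obstacle is to check that this promotion is compatible with the order relations, i.e.\ that two consecutive parts of $\nu$ satisfy $\odp$ of \Def{def:relation2} if and only if their images satisfy the relation $\odg$ defining $\Regg$ in \eqref{eq:diffcd2}, and that the boundary behavior (the grounding relation to $0_{\co^2}$ and the $\Pp^{\rho_+},\Sc^{\rho_+}$ membership) matches on both sides. This is a finite case analysis over the four type combinations primary/secondary $\times$ primary/secondary, refined by the parities of the primary parts. In each case the $\odp$-inequality built from \eqref{pp}--\eqref{ss} has a parity-dependent bound, whereas the $\odg$-inequality has the integer bound $\ep(c',d)+\ep(d,d')+\delta^\ep(\cdot,\cdot)$; the two become equivalent precisely because the correction $\delta^\ep$ of \eqref{eq:always}--\eqref{eq:1} absorbs the half-integer discrepancy. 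For instance, two primary parts with $k_c\odp k'_{d'}$, i.e.\ $k-k'>\ep(c,d')$, and with $k\equiv\rho$, $k'\equiv\delta_g$, promote to colors $c\co$ and $\co d'$, and a short computation shows $\odg$ holds iff $k-k'\geq 2\ep(c,d')+1$; as $k-k'$ is odd this is equivalent to $k-k'>\ep(c,d')$ exactly when $\delta^\ep(c\co,\co d')=\ep(c,d')$, which is the ``always'' exception \eqref{eq:always}. The values $\pm1$ in \eqref{eq:-1}--\eqref{eq:1}, active only when $\delta_g=1$, arise identically from the primary--secondary and secondary--primary cases. Verifying that every case closes in this manner is the only substantial step; once it is done, composing the three links proves \Prp{prop:regularity}.
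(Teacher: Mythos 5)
Your proposal matches the paper's proof essentially step for step: the same chain $\Reg(C,n)\leftrightarrow\Odd^{\rho_+}(C,n)\leftrightarrow\E^{\rho_+}(C,n)\leftrightarrow\Regg(C,n)$, with \Thm{theo:degree2} as the middle link, ground-deletion/insertion for the first outer link, and your parity-based ``promotion'' of primary parts being exactly the paper's map $\Rr$ of \eqref{eq:rr}. The verification you identify as the substantial step (the grounding condition and the case analysis showing $\odp$ corresponds to $\odg$, with $\delta^\ep$ absorbing the parity discrepancy) is precisely the content of the paper's \Lem{lem:4} and \Lem{lem:5}, and the one case you work out agrees with the paper's computation.
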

By \Thm{theo:degree2}, we already have that $|\Odd^{\rho_+}(C,n)|=|\E^{\rho_+}(C,n)|$. We show in the remainder of this section that $\Reg(C,n)$ and $\Odd^{\rho_+}(C,n)$ are in bijection, as are $\E^{\rho_+}(C,n)$ and $\Regg(C,n)$.
\subsubsection{Bijection between $\Reg(C,n)$ and $\Odd^{\rho_+}(C,n)$}
This is straightforward by considering the following map from $\Reg(C,n)$ to $\Odd^{\rho_+}(C,n)$:
\begin{equation}
(\pi_0,\ldots,\pi_{s-1},0_{\co}) \mapsto (\pi_0,\ldots,\pi_{s-1})
\end{equation}
In fact, we have that $c(\pi_k)\in \C'$ for all $k\in \{0,\ldots,s-1\}$, and by \eqref{eq:diffcond}, that
$$|\pi_k|- |\pi_{k+1}|\geq \ep(c(\pi_k),c(\pi_{k+1}))\,,$$
 so that $|\pi_{s-1}|\geq \ep(c(\pi_{k+1}),\co)= 1-\delta_g = \rho$. By \eqref{rel} and \Def{def:rho}, we then have that the partition $(\pi_0,\ldots,\pi_{s-1})$ belongs to $\Odd^{\rho_+}(C,n)$. 
 \m The reciprocal map is obtain by adding a $0_{\co}$ to the right to a partition in $\Odd^{\rho_+}(C,n)$, and the latter relations imply that the resulting partition indeed belongs to $\Reg(C,n)$.
\subsubsection{Bijection between $\E^{\rho_+}(C,n)$ and $\Regg(C,n)$}
It may seem intricate to map these two sets, as a partition in the first set can have some primary part
while a partition in the second set cannot, but the regularity in $\co^2$ allows us to overcome this fact.
For simplicity, we write $\Sc(\C)$,  $\Sc(\C')$ and $\Pp(C')$ respectively the sets of the secondary parts with colors as a product of two primary colors in $\C$, the secondary parts with colors as a product of two primary color in $\C'$ and the primary parts with color in $\C'$.
We observe that we have a natural embedding $\Sc(\C')\hookrightarrow \Sc(\C)$.
\m For any $c\in \C'$, we have  by definition that the size of the secondary part with color $c\co$ has the same parity as $\ep(c,\co)=\rho$, while the size of the secondary part with color $\co c$ has the same parity as $\ep(\co,c)=1-\rho$. We then build the embedding $\Pp(C')\hookrightarrow \Sc(\C) $ as follows:
\[
k_c \mapsto \begin{cases}
k_{c\co} \ \ \text{if}\ \ k\equiv \rho \mod 2 \\
k_{\co c} \ \ \text{if}\ \ k\equiv 1-\rho \mod 2 
\end{cases}\,\cdot
\]
Therefore, we obtain a natural bijection $\Rr$ between $\Pp(C')\sqcup \Sc(\C')$ and $\Sc(\C)\setminus \{(2\Z)_{\co^2}\}$ with the relations

\begin{align}
\Sc(\C') \ni& (2k+\ep(c,c'))_{cc'}\mapsto (2k+\ep(c,c'))_{cc'}\\ \nonumber\\
\Pp(C') \ni& k_c \mapsto \begin{cases}
k_{c\co} \ \ \text{if}\ \ k\equiv \rho \mod 2 \\
k_{\co c} \ \ \text{if}\ \ k\equiv 1-\rho \mod 2 
\end{cases}\,\cdot\label{eq:rr}
\end{align}
We remark that the reciprocal bijection $\Rr^{-1}$ is also the identity on $\Sc(\C')$, and for a part with color $c\co$ or  $\co c$, we associate the part in $\Pp(\C')$ with the same size and color $c$.
\bi We can now extend the map $\Rr$ to the partitions in $\E^{\rho_+}$ with 
\begin{equation}
\Rr: (\pi_0,\ldots,\pi_{s-1})\mapsto (\Rr(\pi_0),\ldots,\Rr(\pi_{s-1}),0_{\co^2})\,,
\end{equation} 
and we have the following proposition.
\begin{prop}\label{prop:r}
The map $\Rr$ defines a bijection between $\E^{\rho_+}(C,n)$ and $\Regg(C,n)$. 
\end{prop}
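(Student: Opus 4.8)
The plan is to promote the part-level bijection $\Rr\colon\Pp(\C')\sqcup\Sc(\C')\to\Sc(\C)\setminus\{(2\Z)_{\co^2}\}$ constructed above to a bijection of partitions. Because $\Rr$ preserves the size of each part and, after the substitution $\co=1$, its colour, the extended map $(\pi_0,\ldots,\pi_{s-1})\mapsto(\Rr(\pi_0),\ldots,\Rr(\pi_{s-1}),0_{\co^2})$ automatically preserves $|\pi|=n$ and the reduced colour sequence $C$. Every image part carries a colour $cc'$, $c\co$ or $\co c$ with $c,c'\in\C'$, hence never $\co^2$; so regularity in the ground $\co^2$ holds for free, the last image part differs from $0_{\co^2}$ as a grounded partition requires, and conversely the non-ground parts of any element of $\Regg$ already lie in the codomain of $\Rr$. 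The whole proposition thus reduces to two order-compatibility statements: that $\Rr$ intertwines the relations $\odp$ and $\odg$ on consecutive parts, and that the membership condition defining $\E^{\rho_+}$ is matched by the relation of the last image part with the ground $0_{\co^2}$.

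For the first statement I would prove the equivalence $\pi_i\odp\pi_{i+1}\Longleftrightarrow\Rr(\pi_i)\odg\Rr(\pi_{i+1})$ by a case analysis following the four lines \eqref{pp}--\eqref{ss} of \Def{def:relation2}, refined by the parity of each primary size. In the secondary--secondary case $\Rr$ is the identity and all four colours lie in $\C'$, so $\delta^\ep=0$ and \eqref{eq:diffcd2} reduces verbatim to \eqref{ss}. In the other three cases a primary part $k_c$ is replaced, according to whether $k\equiv\rho$ or $k\equiv 1-\rho$, by the secondary part of colour $c\co$ or $\co c$; substituting $\ep(c,\co)=\rho$, $\ep(\co,c)=1-\rho$ and $\ep(\co,\co)=0$ into \eqref{eq:diffcd2} and comparing with \eqref{pp}, \eqref{ps}, \eqref{sp} shows that the appended ground generates precisely the corrections $\delta^\ep$ recorded in \eqref{eq:always}, \eqref{eq:-1} and \eqref{eq:1} --- for instance the pairing of $c\co$ with $\co d$ arising in the primary--primary case produces the term $\ep(c,d)$ of \eqref{eq:always}, while the primary--secondary case with second half collapsing to $\co$ produces the $\pm1$ of \eqref{eq:-1} and \eqref{eq:1}.

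The boundary statement is verified in the same spirit. Writing $0_{\co^2}=(0,\co,\co)$ and unwinding \eqref{eq:diffcd2}, one checks that $\Rr(\pi_{s-1})\odg 0_{\co^2}$ is equivalent to $|\pi_{s-1}|\geq\rho$ when $\pi_{s-1}$ is primary and to its index being $\geq\rho$ when it is secondary, which is exactly the condition $\pi_{s-1}\in\Pp^{\rho_+}\sqcup\Sc^{\rho_+}$. With both equivalences established, the bijection is formal: a $\odp$-chain with parts in $\Pp^{\rho_+}\sqcup\Sc^{\rho_+}$ maps to a $\odg$-chain whose last part dominates $0_{\co^2}$, i.e.\ a grounded partition of $\Regg$, and stripping the ground from an element of $\Regg$ and applying $\Rr^{-1}$ inverts the construction by the same equivalences read backwards. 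Together with \Thm{theo:degree2} and the elementary bijection $\Reg(C,n)\approx\Odd^{\rho_+}(C,n)$ of the previous subsection, this finishes \Prp{prop:regularity}.

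The delicate point, and the one I expect to cost the most effort, is that $\odp$ and $\odg$ are inequalities between integers of prescribed parity: once a primary size is fixed to be $\equiv\rho$ or $\equiv 1-\rho$, the left-hand side of the relevant inequality ranges only over one parity class, so the equivalence of the $\odp$- and $\odg$-conditions is not the equality of their thresholds but the equality of their thresholds \emph{after rounding up to that parity class}. The corrections $\delta^\ep=\pm1$ of \eqref{eq:-1} and \eqref{eq:1}, which appear only when $\delta_g=1$, are exactly calibrated to this rounding, and must be checked against every admissible parity configuration of the surrounding energies $\ep(c',d)$, $\ep(c,d)$ and $\ep(d,d')$. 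Organising the bookkeeping through the upper and lower halves $\gamma$, $\mu$ of \Def{def:secpar}, so that one always knows which primary becomes the upper and which the lower half of the resulting secondary part, is in my view the cleanest way to keep the parity accounting under control.
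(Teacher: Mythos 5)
Your proposal is correct and follows essentially the same route as the paper: it reduces the proposition to exactly the paper's two lemmas, namely the boundary equivalence $\pi_{s-1}\in\Pp^{\rho_+}\sqcup\Sc^{\rho_+}\Longleftrightarrow\Rr(\pi_{s-1})\odg 0_{\co^2}$ (Lemma \ref{lem:4}) and the part-level equivalence $k_p\odp l_q\Longleftrightarrow\Rr(k_p)\odg\Rr(l_q)$ (Lemma \ref{lem:5}), each handled by the same case analysis on primary/secondary type refined by parity. Your ``rounding up to the parity class'' observation is precisely the mechanism the paper encodes in the elementary facts \eqref{eq:chi1} and \eqref{eq:chi-1}, which calibrate the $\pm 1$ corrections of \eqref{eq:-1} and \eqref{eq:1}.
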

Recall that $\odp$ in \Def{def:secpar} is the relation that relates the parts of a partition in $\E^{\rho_+}$, and the relation $\odg$ defined in \eqref{eq:diffcd2} relates the parts of a partition in $\Regg$. 
\m Note that the map $\Rr$ from $\Pp(C')\sqcup \Sc(\C')$ to $\Sc(\C)\setminus \{(2\Z)_{\co^2}\}$ conserves the size and the sequence of colors different from $\co$, so that extended to $\E^{\rho_+}$, it also preserves the total energy and the sequence of colors different from $\co$.
We now prove \Prp{prop:r} by using the two next lemmas.
\begin{lem}\label{lem:4}
Let us fix a color $c\in \C' \sqcup \Cc'$ and let us set $c=c(\pi_{s-1})$. We then have that the minimal size of $\pi_{s-1}\in \Pp^{\rho_+}\sqcup \Sc^{\rho_+}$ is the minimal size of $\Rr(\pi_{s-1})$ satisfying 
 $\Rr(\pi_{s-1})\gg 0_{\co^2}$.
\end{lem}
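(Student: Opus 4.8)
The plan is to exploit the fact that $\Rr$ preserves the size of a part, so that the claim reduces to a comparison of two lower bounds on $|\pi_{s-1}|$: the membership bound coming from $\pi_{s-1}\in\Pp^{\rho_+}\sqcup\Sc^{\rho_+}$ on the $\E^{\rho_+}$ side, and the ground bound coming from $\Rr(\pi_{s-1})\odg 0_{\co^2}$ on the $\Regg$ side. The first thing I would record is that, writing the ground as $0_{\co^2}=(0,\co,\co)$, one has $\delta^\ep(cc',\co^2)=0$ for every secondary color $cc'$ occurring here: each exceptional clause \eqref{eq:always}, \eqref{eq:-1}, \eqref{eq:1} requires the second secondary color to carry at least one primary factor in $\C'$, which fails since both factors of $\co^2$ equal $\co$. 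Consequently \eqref{eq:diffcd2} collapses to the clean statement $(2k+\ep(d,d'))_{dd'}\odg 0_{\co^2}\Leftrightarrow k\geq\ep(d',\co)=\rho$.

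With this in hand the secondary case $c=dd'\in\Cc'$ is immediate: $\Rr$ is the identity there, the condition $\pi_{s-1}\in\Sc^{\rho_+}$ is $k\geq\rho$ in the parametrization $(2k+\ep(d,d'))_{dd'}$, and this is exactly the ground relation just computed, so both sides have minimal size $2\rho+\ep(d,d')$. The primary case $c\in\C'$ is the substantive one, and here I would split on the parity twist in the definition \eqref{eq:rr}. If $k\equiv\rho\pmod 2$ then $\Rr(k_c)=k_{c\co}$ with $k=2m+\rho$, and \eqref{eq:diffcd2} gives $k_{c\co}\odg 0_{\co^2}\Leftrightarrow m\geq 0\Leftrightarrow k\geq\rho$, matching the $\E^{\rho_+}$ bound $k\geq\rho$ on the nose. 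If instead $k\equiv 1-\rho\pmod 2$ then $\Rr(k_c)=k_{\co c}$ with $k=2m+(1-\rho)$, and now \eqref{eq:diffcd2} gives $k_{\co c}\odg 0_{\co^2}\Leftrightarrow m\geq\rho\Leftrightarrow k\geq\rho+1$.

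The step I expect to be the crux is reconciling this last $+1$ shift. Taken literally the $\co c$ branch seems to demand $k\geq\rho+1$ rather than $k\geq\rho$, but the parity constraint $k\equiv 1-\rho\not\equiv\rho\pmod 2$ already forces the smallest admissible $k\geq\rho$ of that parity to be $\rho+1$, so the two admissibility conditions in fact cut out the same set of sizes. Since the overall minimal size $\rho$ of a primary part of color $c$ lies in the parity class $k\equiv\rho$, is sent to $\rho_{c\co}$, and satisfies the ground relation with $m=0$, the minimal admissible size is $\rho$ on both sides, as required. The delicate bookkeeping is entirely in verifying that the asymmetry $\ep(c,\co)=\rho$ versus $\ep(\co,c)=1-\rho$ is precisely what makes the parity-dependent shift in $\Rr$ harmless; everything else is a mechanical unraveling of \eqref{eq:diffcd2} and of the definitions of $\Pp^{\rho_+}$ and $\Sc^{\rho_+}$.
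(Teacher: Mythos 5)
Your proof is correct and follows essentially the same route as the paper's: the same case split (secondary color, then primary color according to whether the size has the parity of $\rho$ or of $1-\rho$), with \eqref{eq:diffcd2}, \eqref{eq:rr} and the definitions of $\Pp^{\rho_+},\Sc^{\rho_+}$ unravelled against the ground $0_{\co^2}$ in each case, yielding the same equivalence of admissibility conditions. The only addition is your explicit verification that $\delta^\ep(\cdot,\co^2)=0$ (checking the exceptional clauses \eqref{eq:always}, \eqref{eq:-1}, \eqref{eq:1}), a point the paper uses implicitly when citing \eqref{eq:diffcd2}.
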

\begin{lem}\label{lem:5}
For all parts $k_p,l_q \in \Pp(C')\sqcup \Sc(\C')$, we have the following :
\begin{equation}
k_p\odp l_q \Longleftrightarrow \Rr(k_p)\odg \Rr(l_q)\,\cdot 
\end{equation}
\end{lem}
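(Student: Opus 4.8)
The statement is already a purely local comparison of the two relations on a single pair of parts, so the plan is to prove it by a finite case analysis, splitting according to whether each of $k_p,l_q$ is primary or secondary and, when a part is primary, according to the parity of its size. That parity is exactly the datum $\Rr$ reads off in \eqref{eq:rr}: a primary $k_c$ of size $\equiv\rho$ is padded on the right (color $c\co$) and one of size $\equiv 1-\rho$ is padded on the left (color $\co c$). Converting size-plus-parity into the internal parameter of \Def{def:secpar} (so $a=(k-\rho)/2$ for $c\co$ and $a=(k-1+\rho)/2$ for $\co c$) turns each $\Rr(k_p),\Rr(l_q)$ into an honest secondary part, to which \eqref{eq:diffcd2} applies verbatim.

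First I would dispose of the case $k_p,l_q\in \Sc(\C')$: there $\Rr$ is the identity, $\delta^\ep=0$ by definition, and \eqref{eq:diffcd2} reduces literally to \eqref{ss}, giving the equivalence at once. For each remaining case I would run the same three steps: substitute the explicit form of $\Rr(k_p),\Rr(l_q)$, expand $\odg$ via \eqref{eq:diffcd2} with the matching value of $\delta^\ep$, and compare the resulting inequality in the integer parameters with the corresponding line \eqref{pp}, \eqref{ps} or \eqref{sp} of \Def{def:relation2}. The uniform mechanism is this: $\odp$ is strict ($>$) exactly when the \emph{second} part is primary and non-strict ($\geq$) otherwise; the parity that $\Rr$ forces on the primary sizes is precisely what promotes the strict inequality to the correct non-strict one and absorbs the stray $\ep$-term, so that both sides collapse to the same parity-constrained inequality.

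The delicate point, and the step I expect to be the real obstacle, is assigning the correct branch of the piecewise $\delta^\ep$ to each case. The organizing observation is that a genuine secondary part carries both colors in $\C'$, whereas the image of a primary part carries exactly one $\co$; hence \eqref{eq:always} governs precisely the primary/primary pair $c\co$ versus $\co d$ (the two $\co$'s meeting at the interface), the $-1$ exceptions \eqref{eq:-1} govern precisely the two primary/secondary pairs (first part the image of a primary), and the $+1$ exceptions \eqref{eq:1} govern precisely the two secondary/primary pairs (second part the image of a primary); the requirement that the partner be a genuine secondary forces the unlabelled letters in those conditions into $\C'$, so every other primary/primary combination, like the all-$\C'$ case, has $\delta^\ep=0$. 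I would then split the verification along $\delta_g=0$ versus $\delta_g=1$, since the $\pm1$ corrections of \eqref{eq:-1} and \eqref{eq:1} fire only when $\delta_g=1$; within that split the only non-automatic checks are the boundary values where an $\ep$-value flips between $0$ and $1$, which is exactly where the $\pm1$ correction is engineered to cancel the jump that the parity constraint would otherwise force. Once these assignments are fixed, each of the eight remaining sub-cases is a one-line manipulation, and together with \Lem{lem:4} this yields \Prp{prop:r}.
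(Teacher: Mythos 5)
Your proposal follows the paper's own proof almost verbatim: the paper also dispatches the secondary/secondary case first (where $\Rr$ is the identity and \eqref{eq:diffcd2} is literally \eqref{ss}), then runs the same substitute--expand--compare routine over the remaining cases indexed by type and parity, and its two preliminary facts \eqref{eq:chi1} and \eqref{eq:chi-1} are exactly your mechanism by which parity promotes the strict inequalities of \eqref{pp} and \eqref{sp} to the non-strict ones of \eqref{eq:diffcd2}. Your dictionary assigning the branches of $\delta^\ep$ to the cases also coincides with the paper's proof in eight of the nine cases.

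The ninth case is the one your organizing claim gets "wrong" relative to the paper as written, and it is worth spelling out, because there the mathematics is on your side. You assert that \eqref{eq:1} governs precisely the two secondary/primary pairs and that every primary/primary pair other than $(p\co,\co q)$ has $\delta^\ep=0$. But the first condition of \eqref{eq:1} as printed places no constraint on $c$, so it also fires on the primary/primary pair $(\co p,\,q\co)$ (first part of size $\equiv 1-\rho$, second of size $\equiv\rho$) whenever $\delta_g=1$ and $\ep(p,q)=0$; and the paper's proof of that sub-case indeed cites \eqref{eq:1}. So, as a description of the paper's definition, your claim is inaccurate. However, redoing the algebra shows your value $\delta^\ep(\co p,q\co)=0$ is the one the lemma needs: writing $k_p=(2u+\ep(\co,p))_p$ and $l_q=(2v+\ep(q,\co))_q$, relation \eqref{pp} and $\ep(\co,p)+\ep(q,\co)=1$ give
\begin{align*}
k_p\odp l_q &\Longleftrightarrow (2u+\ep(\co,p))-(2v+\ep(q,\co))\geq 1+\ep(p,q)\\
&\Longleftrightarrow 2\bigl(u-v-\ep(p,q)-\ep(q,\co)\bigr)\geq -\ep(p,q)\\
&\Longleftrightarrow u-v-\ep(p,q)-\ep(q,\co)\geq 0 \quad\text{by \eqref{eq:chi-1}}\,,
\end{align*}
which matches \eqref{eq:diffcd2} exactly when $\delta^\ep(\co p,q\co)=0$. (The paper's second displayed line in this sub-case, with right-hand side $\ep(\co,q)-\ep(p,q)$, is equivalent to its first line only when $\delta_g=0$; that slip is what reconciles its computation with the literal \eqref{eq:1}.) Concretely, take $\delta_g=1$, $\ep(p,q)=0$: then $1_p\odp 0_q$, while $\Rr(1_p)=(0,\co,p)$ and $\Rr(0_q)=(0,q,\co)$ fail $\odg$ if $\delta^\ep(\co p,q\co)=1$, so the lemma is false under the literal reading of \eqref{eq:1}. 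The correct statement requires adding $c\in\C'$ to the first condition of \eqref{eq:1}, making it symmetric to \eqref{eq:-1}, whose two conditions do force the partner to be a genuine secondary. In short: your proof, carried out with your assignments, establishes the intended (corrected) lemma; the mismatch you would discover against the paper is a typo in \eqref{eq:1} plus a compensating algebra slip in the paper's proof of this single case, not a gap in your argument.
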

\Lem{lem:4} gives the equivalence of the minimal size condition for the last part, while 
\Lem{lem:5} states that the difference conditions are equivalent for both sets of partitions, and we directly obtain \Prp{prop:r}.
\begin{proof}[Proof of \Lem{lem:4}] We reason on whether $c\in \Cc'$ or $c\in\C'$  and $\pi_{s-1}$ with a size with the same parity as $\rho$ or $1-\rho$.
\begin{itemize}
\item If $c\in \Cc'$, we then write $c=c_0c_1$. We then have that
\begin{align*}
\pi_{s-1}\in \Sc^{\rho_+} &\Longleftrightarrow |\mu(\pi_{s-1})|\geq \rho & \text{by \Def{def:rho}}\\
&\Longleftrightarrow |\mu(\pi_{s-1})|\geq \ep(c_1,\co)\\
&\Longleftrightarrow \Rr(\pi_{s-1})=\pi_{s-1}\odg 0_{\co^2}\,\cdot& \eqref{eq:diffcd2}
\end{align*}
\item If $c\in \C'$ and $\pi_{s-1} \equiv \rho \mod 2$,
\begin{align*}
\pi_{s-1}\in \Pp^{\rho_+} &\Longleftrightarrow |\pi_{s-1}|\geq \rho \quad \text{and}\quad |\pi_{s-1}| \equiv \rho \mod 2 & \text{by \Def{def:rho}}\\
&\Longleftrightarrow |\pi_{s-1}|\in 2\Z_{\geq 0}+\rho\\
&\Longleftrightarrow  c(\mu(\Rr(\pi_{s-1})))=\co\quad \text{and}\quad  |\mu(\Rr(\pi_{s-1}))|\geq 0& \eqref{eq:rr}\\
&\Longleftrightarrow |\mu(\Rr(\pi_{s-1}))|\geq \ep(\co,\co)\\
&\Longleftrightarrow \Rr(\pi_{s-1})\odg 0_{\co^2}\,\cdot& \eqref{eq:diffcd2}
\end{align*}
\item If $c\in \C'$ and $\pi_{s-1} \equiv 1-\rho \mod 2$,
\begin{align*}
\pi_{s-1}\in \Pp^{\rho_+} &\Longleftrightarrow |\pi_{s-1}|\geq \rho \quad \text{and}\quad |\pi_{s-1}| \equiv 1+\rho \mod 2 & \text{by \Def{def:rho}}\\
&\Longleftrightarrow |\pi_{s-1}|\in 2\Z_{\geq 0}+1+\rho\\
&\Longleftrightarrow |\mu(\Rr(\pi_{s-1}))|\geq \rho \quad \text{and}\quad c(\mu(\Rr(\pi_{s-1})))=c& \eqref{eq:rr}\\
&\Longleftrightarrow |\mu(\Rr(\pi_{s-1}))|\geq \ep(c,\co)\\
&\Longleftrightarrow \Rr(\pi_{s-1})\odg 0_{\co^2}\,\cdot& \eqref{eq:diffcd2}
\end{align*}
\end{itemize}
One can observe that we always have the equivalence
\[\pi_{s-1}\in \Pp^{\rho_+}\sqcup \Sc^{\rho_+} \Longleftrightarrow \Rr(\pi_{s-1})\odg 0_{\co^2}\]
and this conclude the proof  of the lemma.
\end{proof}
\begin{proof}[Proof of \Lem{lem:5}]
Let us first state some obvious fact:
for all integer $a,b$, we have the following,
\begin{enumerate}
\item if $b\in \{-1,0,1\}$, then 
\begin{equation}\label{eq:chi1}
2a\geq b \Longleftrightarrow a \geq \chi(b=1)\,,
\end{equation}
\item if $b\in \{-2,-1,0\}$, then 
\begin{equation}\label{eq:chi-1}
2a\geq b \Longleftrightarrow a \geq -\chi(b=-2)\,\cdot
\end{equation}
\end{enumerate}
As before, we reason on the types of the parts $k_p$ and $l_q$.
\begin{itemize}
\item If $k_p\in \Sc$, we then write $k_p = (2u+\ep(c_0,c_1))_{c_0c_1}$.
\begin{itemize}
\item If $l_q\in \Sc$, we write $l_q=(2v+\ep(c_2,c_3))_{c_2c_3}$.
\begin{align*}
k_p\odp l_q &\Longleftrightarrow u-v- \ep(c_1,c_2)-\ep(c_2,c_3)\geq 0 &\eqref{ss}\\
&\Longleftrightarrow \Rr (k_p)\odg \Rr(l_q)\,\cdot&\eqref{eq:diffcd2}
\end{align*}
\item If $q\in \C'$ and $l\equiv \rho \mod 2$, we write $l_q=(2v+\ep(q,\co))_q$.
We then have 
\begin{align*}
k_p\odp l_q &\Longleftrightarrow  (2u+\ep(c_0,c_1))-(2v+\ep(q,\co)) \geq  1+\ep(c_0,c_1)+\ep(c_1,q) &\eqref{sp}\\
&\Longleftrightarrow  2(u-v-\ep(q,\co)-\ep(c_1,q))\geq \ep(\co,q)-\ep(c_1,q) \\
&\Longleftrightarrow  u-v-\ep(q,\co)-\ep(c_1,q)\geq \ep(\co,q)(1-\ep(c_1,q))&\eqref{eq:chi1} \\
&\Longleftrightarrow \Rr (k_p)\odg \Rr(l_q)\,\cdot&\eqref{eq:1}
\end{align*}
\item If $q\in \C'$ and $l\equiv 1-\rho \mod 2$, we write $l_q=(2v+\ep(\co,q))_q$.
\begin{align*}
k_p\odp l_q &\Longleftrightarrow  (2u+\ep(c_0,c_1))-(2v+\ep(\co,q)) \geq  1+\ep(c_0,c_1)+\ep(c_1,q) &\eqref{sp}\\
&\Longleftrightarrow  2(u-v-\ep(c_1,\co)-\ep(\co;q))\geq \ep(c_1,q)+\ep(\co,q)-1 \\
&\Longleftrightarrow  2(u-v-\ep(c_1,\co)-\ep(\co;q))\geq \ep(c_1,q)-\ep(q,\co) \\
&\Longleftrightarrow  u-v-\ep(c_1,\co)-\ep(\co;q)\geq \ep(c_1,q)\ep(\co,q) &\eqref{eq:chi1}\\
&\Longleftrightarrow \Rr (k_p)\odg \Rr(l_q)\,\cdot&\eqref{eq:1}\,\cdot
\end{align*}
\end{itemize}
\item If $p\in \C'$ and $k\equiv \rho \mod 2$, we write $k_p=(2u+\ep(p,\co))_p$
\begin{itemize}
\item If $l_q\in \Sc$, we write $l_q=(2v+\ep(c_2,c_3))_{c_2c_3}$.
We then have 
\begin{align*}
k_p\odp l_q &\Longleftrightarrow  (2u+\ep(p,\co))-(2v+\ep(c_2,c_3)) \geq  \ep(p,c_2)+\ep(c_2,c_3) &\eqref{ps}\\
&\Longleftrightarrow  2(u-v-\ep(\co,c_2)-\ep(c_2,c_3))\geq \ep(p,c_2)-\ep(p,\co)-2\ep(\co,c_2) \\
&\Longleftrightarrow  2(u-v-\ep(\co,c_2)-\ep(c_2,c_3))\geq (\ep(p,c_2)-1)-\ep(\co,p)\\
&\Longleftrightarrow  u-v-\ep(\co,c_2)-\ep(c_2,c_3))\geq -(1-\ep(p,c_2))\ep(\co,p)&\eqref{eq:chi-1}\\
&\Longleftrightarrow \Rr (k_p)\odg \Rr(l_q)\,\cdot&\eqref{eq:-1}
\end{align*}
\item If $q\in \C'$ and $l\equiv \rho \mod 2$, we write $l_q=(2v+\ep(q,\co))_q$.
We then have 
\begin{align*}
k_p\odp l_q &\Longleftrightarrow  (2u+\ep(p,\co))-(2v+\ep(q,\co)) \geq  1+\ep(p,q) &\eqref{pp}\\
&\Longleftrightarrow  2(u-v-\ep(\co,q)-\ep(q,\co))\geq \ep(p,q)-1 \\
&\Longleftrightarrow  u-v-\ep(\co,q)-\ep(q,\co)\geq 0 &\eqref{eq:chi1}\\
&\Longleftrightarrow \Rr (k_p)\odg \Rr(l_q)\,\cdot&\eqref{eq:diffcd2}
\end{align*}
\item If $q\in \C'$ and $l\equiv 1-\rho \mod 2$, we write $l_q=(2v+\ep(\co,q))_q$.
We then have 
\begin{align*}
k_p\odp l_q &\Longleftrightarrow  (2u+\ep(p,\co))-(2v+\ep(\co,q)) \geq  1+\ep(p,q) &\eqref{pp}\\
&\Longleftrightarrow  2(u-v-\ep(\co,\co)-\ep(\co,q))\geq \ep(p,q)+\ep(\co,p)-\ep(\co,q)\\
&\Longleftrightarrow  2(u-v-\ep(\co,\co)-\ep(\co,q))\geq \ep(p,q)\\
&\Longleftrightarrow  u-v-\ep(\co,\co)-\ep(\co,q)\geq \ep(p,q)&\eqref{eq:chi1}\\
&\Longleftrightarrow \Rr (k_p)\odg \Rr(l_q)\,\cdot&\eqref{eq:always}
\end{align*}
\end{itemize}
\item If $p\in \C'$ and $k\equiv 1-\rho \mod 2$, we write $k_p=(2u+\ep(\co,p))_p$.
\begin{itemize}
\item If $l_q\in \Sc$, we write $l_q=(2v+\ep(c_2,c_3))_{c_2c_3}$.
We then have 
\begin{align*}
k_p\odp l_q &\Longleftrightarrow  (2u+\ep(\co,p))-(2v+\ep(c_2,c_3)) \geq  \ep(p,c_2)+\ep(c_2,c_3) &\eqref{ps}\\
&\Longleftrightarrow  2(u-v-\ep(p,c_2)-\ep(c_2,c_3))\geq -\ep(p,c_2)-\ep(\co,p)\\
&\Longleftrightarrow  u-v-\ep(p,c_2)-\ep(c_2,c_3)\geq -\ep(p,c_2)\ep(\co,p)&\eqref{eq:chi-1}\\
&\Longleftrightarrow \Rr (k_p)\odg \Rr(l_q)\,\cdot&\eqref{eq:-1}
\end{align*}
\item If $q\in \C'$ and $l\equiv \rho \mod 2$, we write $l_q=(2v+\ep(q,\co))_q$.
We then have 
\begin{align*}
k_p\odp l_q &\Longleftrightarrow  (2u+\ep(\co,p))-(2v+\ep(q,\co)) \geq  1+\ep(p,q) &\eqref{pp}\\
&\Longleftrightarrow  2(u-v-\ep(p,q)-\ep(q,\co))\geq \ep(\co,q)-\ep(p,q)\\
&\Longleftrightarrow  u-v-\ep(p,q)-\ep(q,\co)\geq \ep(\co,q)(1-\ep(p,q))&\eqref{eq:chi1}\\
&\Longleftrightarrow \Rr (k_p)\odg \Rr(l_q)\,\cdot&\eqref{eq:1}
\end{align*}
\item If $q\in \C'$ and $l\equiv 1-\rho \mod 2$, we write $l_q=(2v+\ep(\co,q))_q$.
We then have 
\begin{align*}
k_p\odp l_q &\Longleftrightarrow  (2u+\ep(\co,p))-(2v+\ep(\co,q)) \geq  1+\ep(p,q) &\eqref{pp}\\
&\Longleftrightarrow  2(u-v-\ep(p,\co)-\ep(\co,q))\geq \ep(p,q)-1 \\
&\Longleftrightarrow  u-v-\ep(p,\co)-\ep(\co,q)\geq 0 &\eqref{eq:chi1}\\
&\Longleftrightarrow \Rr (k_p)\odg \Rr(l_q)\,\cdot&\eqref{eq:diffcd2}
\end{align*}
\end{itemize}
\end{itemize}
\end{proof}
\section{Degree beyond \Thm{theo:flatreg2}} \label{sect:degk}
We begin this section by defining a part of degree $k$.
\begin{deff}
Let $\C$ be a set of  primary colors. For any $k\in \Z_{\geq 1}$, we define the set of colors of degree $k$ as the set of the products of $k$ primary colors:
$$
\C^k = \{c_1\cdots c_k:c_1,\ldots,c_k\in \C\}\,\cdot
$$
For an energy $\ep$ and the corresponding flat relation $\gte$ defined on the set of primary parts, we define the set $\Pp^k = \Z \times \C^k$ of parts of degree $k$ as the sum of $k$ primary parts well-related by $\gte$:
\begin{equation}\label{eq:degkpar}
(p,c_1\cdots c_k) = \sum_{u=1}^k \left(p+\sum_{v=u}^{k-1}\ep(c_v,c_{v+1})\right)_{c_u} 
= \left(kp+\sum_{u=1}^{k-1}u\ep(c_u,c_{u+1})\right)_{c_1\cdots c_k}\,\cdot
\end{equation}
We set the function $\gamma_1,\ldots,\gamma_k$ on $\Pp^k$ such that
\begin{equation}\label{eq:gamk}
\gamma_i(p,c_1\cdots c_k) = \left(p +\sum_{u=i}^{k-1}\ep(c_i,c_{i+1})\right)_{c_i}\,,
\end{equation}
and we then obtain that 
\begin{align}
(p,c_1\cdots c_k) &= \sum_{i=1}^k \gamma_i(p,c_1\cdots c_k) \,,\label{eq:sumk}\\
\gamma_1(p,c_1\cdots c_k)&\gte\gamma_2(p,c_1\cdots c_k)\gte \cdots \gte \gamma_k(p,c_1\cdots c_k)\label{eq:ordk}\,\cdot
\end{align}
\end{deff}
\begin{deff}
We can then naturally define a flat relation $\gtrdot^k$ on $\Pp^k$ as follows:
\begin{align}\label{eq:flatcond3}
(p,c_1\cdots c_k)\gtrdot^k (q,d_1\cdots d_k)&\Longleftrightarrow p-q = \ep(c_k,d_1)+\sum_{u=1}^{k-1}\ep(d_u,d_{u+1})\,\nonumber \\
&\Longleftrightarrow \gamma_k(p,c_1\cdots c_k)\gte \gamma_1(q,d_1\cdots d_k)\,\cdot
\end{align}
The latter is equivalent to saying that the smallest primary part of $(p,c_1\cdots c_k)$ is greater than the greatest primary part of $(q,d_1\cdots d_k)$ in terms of $\gte$.
\end{deff} 
One can check that the relation $\gtrdot^k$ is indeed the flat relation linked to the energy $\ep^k$ defined on $\C^k \times \C^k $ by 
\begin{equation}\label{eq:epk}
\ep^k: (c_1\cdots c_k,d_1\cdots d_k) \mapsto \sum_{u=1}^{k-1}u\ep(c_u,c_{u+1}) + n\ep(c_k,d_1) + \sum_{u=1}^{k-1}(k-u)\ep(d_u,d_{u+1})\,\cdot
\end{equation}
In fact, by using \eqref{eq:degkpar} and \eqref{eq:flatcond3}, the difference of sizes of the parts  
$(p,c_1\cdots c_k)$ and $(q,d_1\cdots d_k)$ is exactly equal to $\ep^k(c_1\cdots c_k,d_1\cdots d_k)$.
\bi This extension of the flatness to degree $k$ has a strong connection with crystal base theory via the following proposition.
\begin{prop}
Let $\B$ be a crystal and suppose that there exists an energy function $H$ on $\B\ot\B$. Then, the function $H^k$  on  $\B^{\ot k}\ot \B^{\ot k} $ defined  by 
\begin{align}
b_1\ot \cdots \ot b_k \ot b_{k+1}\ot \cdots\ot b_{2k} &\mapsto \sum_{i=1}^{2k-1} \min\{i,2k-i\} H(b_i\ot b_{i+1})
\end{align}
is also an energy function  on  $\B^{\ot k}\ot \B^{\ot k} $.
\end{prop}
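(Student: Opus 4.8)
The plan is to check directly that $H^k$ satisfies the defining relation \eqref{eq:ef} of an energy function for the tensor decomposition $\B^{\ot k}\ot\B^{\ot k}$, whose two slots are $\beta_1=b_1\ot\cdots\ot b_k$ and $\beta_2=b_{k+1}\ot\cdots\ot b_{2k}$. Writing $j$ for the summation index (to free $i$ for the crystal operators) and setting the tent weights $w_j=\min\{j,2k-j\}$ for $1\le j\le 2k-1$, together with the convention $w_0=w_{2k}=0$, we have $H^k(P)=\sum_{j=1}^{2k-1}w_j\,H(b_j\ot b_{j+1})$ for $P=b_1\ot\cdots\ot b_{2k}$. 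First I would recall that, by associativity and the binary tensor rule \eqref{eq:tensrul}, for each $i\in\I$ the operator $\tilde e_i$ acts on $P$ by modifying a single factor $b_p$ and fixing all others. Hence only the two summands of $H^k$ containing $b_p$ can change, and
\[
H^k(\tilde e_i P)-H^k(P)=w_{p-1}\,\delta_{\mathrm{L}}+w_p\,\delta_{\mathrm{R}},
\]
with $\delta_{\mathrm{L}}=H(b_{p-1}\ot\tilde e_i b_p)-H(b_{p-1}\ot b_p)$ and $\delta_{\mathrm{R}}=H(\tilde e_i b_p\ot b_{p+1})-H(b_p\ot b_{p+1})$, the boundary terms ($p=1$ or $p=2k$) vanishing through $w_0=w_{2k}=0$.

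The core is a localization lemma: whenever $\tilde e_i$ acts on $b_p$, one has $\varphi_i(b_{p-1})<\varepsilon_i(b_p)$ and $\varphi_i(b_p)\ge\varepsilon_i(b_{p+1})$. I would prove it with the signature rule attached to the convention \eqref{eq:tensrul}: assign to each $b_j$ the word consisting of $\varepsilon_i(b_j)$ letters $-$ followed by $\varphi_i(b_j)$ letters $+$, concatenate these words for $j=1,\ldots,2k$, and consider the height $E(x)$, defined as the number of letters $-$ minus the number of letters $+$ among the first $x$ letters. The factor acted on by $\tilde e_i$ is the one carrying the position $x^\ast$ of the rightmost global maximum $M$ of $E$, and this $x^\ast$ is forced to be the last $-$ of $b_p$. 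Since $M$ strictly exceeds the internal maximum of the $b_{p-1}$-block one gets $\varepsilon_i(b_p)>\varphi_i(b_{p-1})$, and since no record of $E$ may occur to the right of $x^\ast$, evaluating $E$ at the end of the $b_{p+1}$-block gives $\varepsilon_i(b_{p+1})\le\varphi_i(b_p)$. By \eqref{eq:ef} applied to the pairs $(b_{p-1},b_p)$ and $(b_p,b_{p+1})$, these two inequalities yield $\delta_{\mathrm{L}}=-\chi(i=0)$ (right action) and $\delta_{\mathrm{R}}=+\chi(i=0)$ (left action).

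Substituting, the increment telescopes: $H^k(\tilde e_i P)-H^k(P)=\chi(i=0)\,(w_p-w_{p-1})$. As $w_j-w_{j-1}=+1$ for $1\le j\le k$ and $w_j-w_{j-1}=-1$ for $k+1\le j\le 2k$, this increment equals $+\chi(i=0)$ exactly when $p\le k$ and $-\chi(i=0)$ exactly when $p\ge k+1$. Applying the binary rule \eqref{eq:tensrul} to $\beta_1\ot\beta_2$ (again by associativity), $\tilde e_i$ acts on a factor with $p\le k$, i.e. inside $\beta_1$, precisely when $\varphi_i(\beta_1)\ge\varepsilon_i(\beta_2)$, and inside $\beta_2$ precisely when $\varphi_i(\beta_1)<\varepsilon_i(\beta_2)$. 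Thus the increment is $+\chi(i=0)$ iff $\varphi_i(\beta_1)\ge\varepsilon_i(\beta_2)$ and $-\chi(i=0)$ otherwise, which is exactly \eqref{eq:ef} for $H^k$ on $\B^{\ot k}\ot\B^{\ot k}$; this holds for all $i\in\I$, proving the proposition.

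I expect the main obstacle to be the localization lemma, and in particular its second inequality $\varphi_i(b_p)\ge\varepsilon_i(b_{p+1})$: global cancellation in the signature may match letters $-$ of $b_{p+1}$ against letters $+$ lying far to the left of $b_p$, so one cannot settle it by inspecting the local pair $(b_p,b_{p+1})$ alone. The height-function bookkeeping --- encoding ``$\tilde e_i$ acts on $b_p$'' as ``$x^\ast$ is the rightmost global maximum of $E$'' and reading off that no later record occurs --- is what makes this half rigorous. Once the lemma is in place, the reduction to two affected summands and the telescoping of the tent weights are routine.
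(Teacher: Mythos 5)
Your global structure is sound and matches the paper's: only the two summands of $H^k$ containing the modified factor $b_p$ change, \eqref{eq:ef} evaluates them as $\delta_{\mathrm L}=-\chi(i=0)$ and $\delta_{\mathrm R}=+\chi(i=0)$, the tent weights telescope to $\chi(i=0)(w_p-w_{p-1})=\pm\chi(i=0)$ according to whether $p\le k$ or $p\ge k+1$, and \eqref{eq:tensrul} applied to $\beta_1\ot\beta_2$ identifies these two cases with $\varphi_i(\beta_1)\ge\varepsilon_i(\beta_2)$ and $\varphi_i(\beta_1)<\varepsilon_i(\beta_2)$. Your localization lemma is also true exactly as stated. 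The genuine problem is your proof of it. For the conventions you fix (each $b_j$ contributes $\varepsilon_i(b_j)$ letters $-$ followed by $\varphi_i(b_j)$ letters $+$, height $E$ counting $-$ as $+1$, tensor rule \eqref{eq:tensrul}), the operator $\tilde e_i$ acts on the factor containing the \emph{leftmost} (first) global maximum of $E$, not the rightmost. Counterexample: three factors with $(\varepsilon_i,\varphi_i)=(1,0),\,(0,1),\,(1,0)$, word $-+-$, heights $1,0,1$. The rightmost global maximum lies in $b_3$, but grouping as $(b_1\ot b_2)\ot b_3$ and applying \eqref{eq:tensrul} twice (first $\varphi_i(b_1)=0\ge\varepsilon_i(b_2)=0$, then $\varphi_i(b_1\ot b_2)=1\ge\varepsilon_i(b_3)=1$) shows that $\tilde e_i$ acts on $b_1$. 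The error is not cosmetic, because it puts the strictness on the wrong sides: with ``rightmost maximum'', positions to the left of $x^\ast$ satisfy only $E\le M$, so your record argument yields $\varphi_i(b_{p-1})\le\varepsilon_i(b_p)$, and the equality case ruins $\delta_{\mathrm L}$ (if $\varphi_i(b_{p-1})=\varepsilon_i(b_p)$ then $\tilde e_i(b_{p-1}\ot b_p)=\tilde e_i b_{p-1}\ot b_p$, so \eqref{eq:ef} says nothing about $H(b_{p-1}\ot\tilde e_i b_p)$); while positions strictly to the right of $x^\ast$ satisfy $E<M$, so the same argument yields $\varepsilon_i(b_{p+1})<\varphi_i(b_p)$, which is false in general (the example above, with $p=1$, has $\varepsilon_i(b_2)=\varphi_i(b_1)$). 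With ``leftmost maximum'' the bookkeeping comes out exactly right: $E<M$ strictly for $y<x^\ast$ and $E\le M$ for $y>x^\ast$, which is precisely what delivers $\varphi_i(b_{p-1})<\varepsilon_i(b_p)$ and $\varepsilon_i(b_{p+1})\le\varphi_i(b_p)$.

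You should also know that the signature machinery is avoidable altogether, and this is how the paper argues. Your stated worry, that the pair $(b_p,b_{p+1})$ cannot be inspected locally because of long-range cancellations, is answered by associativity itself: group the product as $(b_1\ot\cdots\ot b_{p-2})\ot(b_{p-1}\ot b_p)\ot(b_{p+1}\ot\cdots\ot b_{2k})$. Any crystal operator modifies exactly one slot of a tensor product, and by associativity the result does not depend on the grouping; since $\tilde e_i$ modifies $b_p$, it modifies the middle slot, and the induced action on that slot, computed in the crystal $\B\ot\B$, must equal $b_{p-1}\ot\tilde e_i b_p$. By \eqref{eq:tensrul} this forces $\varphi_i(b_{p-1})<\varepsilon_i(b_p)$; the same grouping around $b_p\ot b_{p+1}$ forces $\varphi_i(b_p)\ge\varepsilon_i(b_{p+1})$. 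This one-line argument is the paper's entire proof of the localization step; combined with your reduction to two summands and the telescoping of the weights (which coincide with the paper's computation), it completes the proof. So either substitute the associativity argument for your lemma's proof, or correct ``rightmost'' to ``leftmost'' and rerun the record bookkeeping; with either repair the rest of your proposal goes through.
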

\begin{proof}
The tensor product being associative, we then have, for all $i\in \{0,\cdots,n\}$ and for all $j\in\{1,\ldots,2k\}$, that
\[\tilde e_i (b_1\ot\cdots\ot b_{2k} )= b_1\ot \cdots \ot \tilde e_i(b_j) \ot \cdots b_{2k}\Longrightarrow \begin{cases}
\tilde e_i (b_{j-1}\ot b_j)  = b_{j-1}\ot \tilde e_i(b_j) \\
\tilde e_i (b_j\ot b_{j+1})  = \tilde e_i(b_j)\ot b_{j+1}
\end{cases}\,\cdot\]
We thus obtain by \eqref{eq:ef} that, for $j\leq k$, (the following still holds for $j=1$)
\begin{align*}
H^k(\tilde e_i (b_1\ot\cdots\ot b_{2k} ))-H^k(b_1\ot\cdots\ot b_{2k})&= (j-1)\left(H(b_{j-1}\ot \tilde e_i(b_j))-H(b_{j-1}\ot b_j)\right)+\\
&\quad \quad j\left(H(\tilde e_i(b_j)\ot b_{j-1} )-H(b_j\ot b_{j+1})\right)\\
&= -(j-1)\chi(i=0)+j\chi(i=0)\\
&= \chi(i=0)\,\cdot
\end{align*}
On the other hand, for $j>k$, we have by \eqref{eq:ef} that (the following still holds for $j=2k$)
\begin{align*}
H^k(\tilde e_i (b_1\ot\cdots\ot b_{2k} ))-H^k(b_1\ot\cdots\ot b_{2k})&= (2k-j+1)\left(H(b_{j-1}\ot \tilde e_i(b_j))-H(b_{j-1}\ot b_j)\right)+\\
&\quad (2k-j)\left(H(\tilde e_i(b_j)\ot b_{j-1} )-H(b_j\ot b_{j+1})\right)\\
&= -(2k-j+1)\chi(i=0)+(2k-j)\chi(i=0)\\
&= -\chi(i=0)\,\cdot
\end{align*}
\end{proof}
The tensor of level $\ell$ perfect crystals being a level $\ell$ perfect crystal as well \cite{(KMN)$^2$a}, we then obtain that $\B^{\ot k}$ is a perfect crystal if $\B$ is.
\m
We note that the energy function of each perfect crystal $\B$ studied in \Sct{sect:level1} can be obtain by operating a transformation, which preserves the ground, on a certain minimal energy satisfying the condition in \Thm{theo:flatreg1} and such that $\delta_g=0$. Therefore, we can define both secondary flat and regular partitions corresponding to this energy function. In particular, since the corresponding minimal energy satisfies $\delta_g=0$, the energies related to these flat and regular partitions are almost equal by \eqref{eq:flatcond2} and \eqref{eq:diffcd2}. By \Prp{prop:r}, this means that the partitions, corresponding to those in $\E^{1_+}$ after applying the transformation on the minimal energy, satisfy some difference condition equal to the difference implied by the corresponding energy function of $\B^2$. In particular, one can view the case $A^{(2)}_{2n}$ as a result that links the generalization of the Siladi\'c theorem \cite{IK19} for $2n$ primary colors to the unique level one standard module $L(\Lambda_0)$. This fits with the original work of Siladi\'c \cite{Si02}, where he stated his identity after describing a basis of the unique level one standard module of $A_{2}^{(2)}$ through vertex operators. A suitable subsequent work is then to build the vertex operators, for the level one standard module of $A_{2n}^{(2)}(n\geq 2)$, which will allow us to describe a basis corresponding to the difference conditions given by the generalization of Siladi\'c's theorem.  
\bi 
We now define the degree $k$ flat partitions.
\begin{deff}
We define $\Fltk$ to be the set of \textit{degree} $k$ flat partitions, which are the flat partitions into degree $k$ parts in $\Pp^k$, with  ground $\co^k$ and energy $\ep^k$ defined in \eqref{eq:epk}.
\end{deff} 
In particular, when $\ep(\co,\co)=0$, we can then generalize the bijection built in \Sct{sect:flat12}.
\begin{prop}\label{prop:flat1k}
For any $k\geq 1$, there is a bijection $\mathcal{F}_k$ between $\Fltk$ and $\Flt$ that preserves the total energy and the sequence of colors different from $\co$ of the flat partitions.
\end{prop}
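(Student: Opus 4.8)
The plan is to generalize the degree-two bijection $\mathcal F$ of \Sct{sect:flat12} in the most natural way, by \emph{unfolding} each degree $k$ part into its $k$ primary halves. Given a partition $\pi=(\pi_0,\ldots,\pi_{s-1},0_{\co^k})\in\Fltk$, I would define $\mathcal F_k(\pi)$ to be the concatenation of the sequences $\gamma_1(\pi_j),\ldots,\gamma_k(\pi_j)$ for $j=0,\ldots,s-1$ (with the $\gamma_i$ from \eqref{eq:gamk}), then delete the maximal trailing run of halves equal to $0_\co$, and finally append a single ground $0_\co$. This is exactly the $k=2$ recipe, where the two cases $c_{2s-1}=\co$ and $c_{2s-1}\neq\co$ correspond to whether the trailing run has length one or zero.

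To show the image lies in $\Flt$, I would argue that the concatenated sequence is a single $\gte$-chain: within a part \eqref{eq:ordk} gives $\gamma_1(\pi_j)\gte\cdots\gte\gamma_k(\pi_j)$, and between consecutive parts the relation $\pi_j\gtrdot^k\pi_{j+1}$ unwinds by \eqref{eq:flatcond3} to $\gamma_k(\pi_j)\gte\gamma_1(\pi_{j+1})$. The deleted halves are precisely the $\co$-colored tail of the color word of $\pi_{s-1}$ (which cannot be all of $c_1\cdots c_k$, since $c_1\cdots c_k\neq\co^k$), and each such half is forced to equal $0_\co$ because $\ep(\co,\co)=0$ together with the ground relation. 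Hence the new penultimate part carries a color $\neq\co$, so it differs from $0_\co$ and the grounded convention holds, while its $\gte$-relation to the appended ground is inherited from the dropped $0_\co$ halves. Energy preservation is immediate from \eqref{eq:sumk}, and the sequence of colors different from $\co$ is preserved by construction.

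For the inverse, I would group the $s$ nonground primary parts of a partition in $\Flt$ into consecutive blocks of $k$, recombining each block into a degree $k$ part via \eqref{eq:degkpar}; when $s$ is not a multiple of $k$, I complete the final block by inserting $k-(s\bmod k)$ copies of $0_\co$ at the bottom before closing with the ground $0_{\co^k}$, which is the $k$-ary analogue of the odd/even dichotomy used for $\mathcal F^{-1}$ at $k=2$. Each block is a valid degree $k$ part because its $k$ members are consecutive, hence $\gte$-chained, and the cross-block relation $\gtrdot^k$ holds since the last member of one block is $\gte$ the first member of the next.

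The main obstacle will be the bookkeeping at the ground, namely reconciling the deleted trailing $\co$-run of $\pi_{s-1}$ under $\mathcal F_k$ with the number $k-(s\bmod k)$ of $0_\co$ parts inserted by $\mathcal F_k^{-1}$, and verifying that these two operations are mutually inverse while respecting the grounded convention in both directions. Once this matching is made precise, everything else is a routine unfolding/refolding governed by \eqref{eq:degkpar}, \eqref{eq:sumk}, \eqref{eq:ordk}, and \eqref{eq:flatcond3}.
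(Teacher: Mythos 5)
Your proposal is correct and takes essentially the same route as the paper's own proof: the identical unfolding map (each $\pi_j$ replaced by $\gamma_1(\pi_j),\ldots,\gamma_k(\pi_j)$, with the trailing run of $0_{\co}$ halves of $\pi_{s-1}$ deleted and a single ground $0_{\co}$ appended), and the identical inverse that regroups the primary parts into consecutive blocks of $k$ and pads the final block with $0_{\co}$'s. The ``bookkeeping at the ground'' you flag is resolved in the paper exactly as your setup suggests: the empty trailing run (case $i=k$, where $\gamma_k(\pi_{s-1})\gte 0_{\co}$ follows from \eqref{eq:flatcond3} applied to $\pi_{s-1}\gtrdot^k 0_{\co^k}$) corresponds precisely to the case $s'=0$ of the inverse, and the nonempty run of length $k-i$ to $s'=k-i$.
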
 
\begin{proof}
For any flat partition $\pi=(\pi_0,\ldots,\pi_{s-1},0_{\co^k})$ in $\Fltk$, 
we associate the partition $\mathcal{F}_k(\pi)$ defined by the sequence
\[(\gamma_1(\pi_0),\ldots,\gamma_k(\pi_0),\gamma_1(\pi_1),\ldots,\gamma_k(\pi_1),\ldots,\gamma_1(\pi_{s-2}),\ldots,\gamma_k(\pi_{s-2}),\gamma_1(\pi_{s-1}),\ldots,\gamma_i(\pi_{s-1}),0_{\co})\,,\]
where $i=\max\{j\in \{1,\ldots,k\}:\gamma_j(\pi_{s-1})\neq 0_{\co}\}$. The existence of such index $i$ is ensured by the fact that $\pi_{s-1}\neq 0_{\co^k}$. It suffices to assume by contradiction that for all $j\in \{1,\ldots,k\}$ we have $\gamma_j(\pi_{s-1})=0_{\co}$. Since $\ep(\co,\co)=0$, we then have $0_{\co}\gte 0_{\co}$, and we obtain by \eqref{eq:sumk} that 
\[0_{\co^k}\neq \pi_{s-1}= \sum_{j=1}^k \gamma_j(\pi_{s-1})= \sum_{j=1}^k 0_{\co} = 0_{\co^k}\,\cdot\]
To prove that $\mathcal{F}_k(\pi)$ belongs to $\Flt$, we use \eqref{eq:ordk} along with \eqref{eq:flatcond3} to see that $\mathcal{F}_k(\pi)$ is well related up to $\gamma_i(\pi_{s-1})$, and to show that $\gamma_i(\pi_{s-1})\gte 0_{\co}$, we distinguish two cases.
\begin{itemize}
\item If $i<k$, we then have that $\gamma_{i+1}(\pi_{s-1})=0_{\co}$, and we conclude with \eqref{eq:ordk}.
\item If $i=k$, we then have by \eqref{eq:flatcond3} that $\gamma_{k}(\pi_{s-1})\gte \gamma_1(0_{\co^k})$ and we conclude.
\end{itemize}
\bi We now construct the inverse bijection $\mathcal{F}_k^{-1}$. For any $\pi =(\pi_0,\ldots,\pi_{s-1},0_{c_0})$, we write the decomposition $s=km-s'$ with the unique non-negative integers $m,s'$ such that $s'\in \{0,\ldots,k-1\}$. We then set 
\[\mathcal{F}_k^{-1}(\pi) = (\underbrace{\pi_0+\cdots+\pi_{k-1}},\underbrace{ \pi_k+\cdots+\pi_{2k-1}},\ldots, \underbrace{\pi_{(m-2)k}+\cdots+\pi_{mk-k-1}},\underbrace{\pi_{(m-1)k}+\cdots+\pi_{s-1}+ s' \times 0_{\co}}, 0_{\co^k})\,\cdot\]
Here we see by \eqref{eq:sumk}, \eqref{eq:ordk} and \eqref{eq:flatcond3}, this sequence is well-defined up to the part $\pi_{(m-1)k}+\cdots+\pi_{s-1}+ s' \times 0_{\co}$. Note that since $\pi_{s-1}\neq 0_{\co}$, we necessarily have that  $\pi_{(m-1)k}+\cdots+\pi_{s-1}+ s' \times 0_{\co}\neq 0_{\co^k}$. We distinguish two cases.
\begin{itemize}
\item If $s'>0$, since $\pi_{s-1}\gte 0_{\co}\gte 0_{\co}$, we then have by \eqref{eq:sumk} that $\pi_{(m-1)k}+\cdots+\pi_{s-1}+ s' \times 0_{\co}$ is  in $\Pp^k$, and by \eqref{eq:flatcond3} that 
$\pi_{(m-1)k}+\cdots+\pi_{s-1}+ s' \times 0_{\co}\gg^k 0_{\co^k}$.
\item If $s'=0$, we then have by \eqref{eq:sumk} that $\pi_{(m-1)k}+\cdots+\pi_{s-1}$ is  in $\Pp^k$, and since $\pi_{s-1}\gte 0_{\co}$, we obtain by \eqref{eq:flatcond3} that 
$\pi_{(m-1)k}+\cdots+\pi_{s-1}\gg^k 0_{\co^k}$.
\end{itemize}
The inversion comes from the correspondence between the case $s'=0$ for $\mathcal{F}_k^{-1}$ and $i=k$  for $\mathcal{F}_k$.
\end{proof}
\bi 
The latter proposition allows us to have the following correspondences
\begin{center}
\begin{tikzpicture}[scale=0.8, every node/.style={scale=0.8}]
\draw (0,0) node[left] {degree one :};
\draw (0,-1.5) node[left] {degree two :};
\draw (0,-3) node[left] {degree $k$ :};

\draw (2,0) node {$\Flt$};
\draw (2,-1.5) node {$\Fltt$};
\draw (2,-3) node {$\Fltk$};

\draw (5,0) node {$\Reg$};
\draw (5,-1.5) node {$\Regg$};
\draw (5.2,-3) node {$\Regk$};

\draw (6,-3.5) node[right] {{\tiny Definition?}}; 

\draw [->] (5.4,-3.2)--(6,-3.45);

\draw [<->] (2.5,0)--(4.5,0); \draw (3.5,0.2) node {{\tiny \Thm{theo:flatreg1}}};
\draw [<->] (2.5,-1.5)--(4.5,-1.5); \draw (3.5,-1.3) node {{\tiny \Thm{theo:flatreg2}}};
\draw [<->] (2.5,-3)--(4.5,-3); \draw (3.5,-2.8) node {{\tiny \Thm{theo:flatreg}}};

\draw [<->] (2,-0.3)--(2,-1.2);

\draw [<->] (1.8,-0.3) arc (150:210:2.4); \draw (1.6,-2.25) node[left] {{\tiny \Prp{prop:flat1k}}};

\draw [<->] (5,-0.3)--(5,-1.2); \draw (5,-0.75) node[left] {{\tiny \cite{IK201}}};

\draw [<->] (5.2,-0.3) arc (30:-30:2.4); \draw (5.45,-1.5) node[right] {{\tiny Bressoud's algorithm at degree $k$?}};

\end{tikzpicture}
\end{center}
A major subsequent work would be to find a suitable energy to define regular partitions for degree $k$ and which would allow us to state an analogue of \Thm{theo:flatreg} at degree $k$. This problem appears to be closely related to the problem of finding a generalization to weighted words at degree $k$ of the result stated in \cite{IK201}.
\section{Closing remarks}\label{sect:remarks}
We close this paper with two remarks.
First, we point out that in \cite{DK1,DK2}, Dousse and the author gave a theorem that connects some flat partitions to regular partitions. They considered the set of $n^2$ secondary colors $\C=\{\alpha_i\beta_j:i,j\in \{0,\cdots,n-1\}\}$, the ground $\co=a_0b_0$, and the energies $\ep,\ep_1,\ep_2$ defined by:
\begin{align}
\ep(a_ib_j,a_{i'}b_{j'})& = \chi(i\geq i')-\chi(i=j=i')+\chi(j\leq j')-\chi(j=i'=j')\\
\ep_1 (a_ib_j,a_{i'}b_{j'})& = \ep(a_ib_j,a_{i'}b_{j'})+\delta_1(a_ib_j,a_{i'}b_{j'})\\
\ep_2 (a_ib_j,a_{i'}b_{j'})& = \ep(a_ib_j,a_{i'}b_{j'})+\delta_2(a_ib_j,a_{i'}b_{j'})\,,
\end{align}
where $\delta_1$ equals $0$ up to the following exceptions,
\[
\delta_1(a_ib_j,a_{i'}b_{j'}) = 1 \quad\text{for}\quad 
\begin{cases}
i=j=i'=j'\neq 0\\
i=j=i'>i'\\
i<j=i'=j'
\end{cases}\,\cdot
\]
and $\delta_2$ equals $0$ up to the following exceptions,
\[
\delta_2(a_ib_j,a_{i'}b_{j'}) = 1 \quad\text{for}\quad 
\begin{cases}
i=j=i'=j'\neq 0\\
i=j=j'+1\leq i'\\
i\geq j+1=i'=j'
\end{cases}\,\cdot
\]
The flat partitions then correspond to those with energy $\ep$, and the first set of regular partitions corresponds to the energy $\ep_1$, and the second set to the energy $\ep_2$. However, the bijections established in \cite{DK1} link the flat partitions to some subsets of the sets of regular partitions:
\begin{itemize}
\item In the first case, the corresponding regular partitions are those which avoid the following forbidden patterns:
\begin{align*}
\text{for all}\quad i\geq  i'>j> j':& (p+1)_{a_ib_j},p_{a_{j'+1}b_{j'+1}},p_{a_{i'}b_{j'}}\,,\\
\text{for all}\quad i< i'<j\leq  j':& (p+1)_{a_ib_j},(p+1)_{a_{i+1}b_{i+1}},p_{a_{i'}b_{j'}}\,\cdot
\end{align*}
\item In the second case, the corresponding regular partitions are those which avoid the following forbidden patterns:
\begin{align*}
\text{for all}\quad i'> i>j'\leq j':& (p+1)_{a_ib_j},p_{a_{i'}b_{i'}},p_{a_{i'}b_{j'}}\,,\\
\text{for all}\quad i'\leq i<j'< j':& (p+1)_{a_ib_j},(p+1)_{a_{j}b_{j}},p_{a_{i'}b_{j'}}\,\cdot
\end{align*}
\end{itemize}
Furthermore, these bijections preserve the total energy of the partitions, but only the sequence of colors in which we replace $a_ib_i$ by $1$ for all $i\in\{0,\ldots,n-1\}$.
\m A suitable subsequent work could then be to investigate some analogue of \Thm{theo:flatreg} not only in terms of difference conditions implied by energies, but also in terms of forbidden patterns.
\bi 
The second remark concerns the notion of difference $d$ at distance $l$ for positive integers $d,l$.  A partition $\la=(\la_1,\cdots,\la_s)$ is said to satisfy the difference $d$  at distance $l$ condition if $\la_{i-k}-\la_k\geq d$ for all $i\in\{k+1,\ldots,s\}$. By considering the conjugated $\la^* = (\la^*_1,\ldots,\la^*_{r})$ of $\la$, we equivalently have that $\la^*_{i-d}-\la^*_i\leq k$ for all $i\in \{d,\ldots,r\}$. Therefore, a difference-distance condition imply a flatness condition. 
\m Using this term, the partitions with fewer than $m$ occurrences for each positive integer, described in the Glaisher theorem, are exactly the partitions satisfying difference $1$ at distance $m-1$. We can then see The Glaisher theorem as the link between partitions satisfying difference $1$ at distance $m-1$ to $m$-regular partitions. The Andrews-Gordon identities \cite{AN74}, a broad generalization of the Rogers-Ramanujan identities \cite{RR19}, can be seen as the level above Glaisher's identity, dealing with the partitions satisfying difference $2$ at distance $m-1$. They state that, for any positive integers $n,i,m$ such that $1\leq i\leq m$, the number of partitions of $n$ into parts not congruent to $0,\pm i \mod 2m+1$, is equal to the number of partitions of $n$ with fewer than $i$ occurrences of $1$, and which satisfy difference $2$ at distance $m-1$. Here again, the connection is made with a subset of $2m+1$-regular partitions. By \Thm{theo:kx}, this subset corresponds to the subset of all the $2m+1$-flat partitions with no parts congruent to $\pm i\mod 2m+1$. On the other hand, the subset of partitions satisfying the difference $2$ at distance $m-1$ corresponds to the subset of $m$-flat partitions $\la=(\la_1,\ldots,\la_s)$ satisfying the following: 
\begin{equation}
\begin{cases}
\la_1-\la_2 \leq i-1\,, \\
\la_{u-2}-\la_u \leq m-1 \ \ \text{for all} \ \ u\in\{3,s\}\,,\\
\la_s\leq m-1\,\cdot
\end{cases}
\end{equation} 
The problem of finding a simple bijective proof for the Andrews-Gordon identities, hence the Rogers-Ramanujan identities, could then be reduced to the problem of finding a bijection between the corresponding $m$-flat partitions  and $2m+1$-flat partitions, whose forms are relatively close one to each other. 
Other analogous identities given by Bressoud \cite{Bre80}, allow us to make a connection between subset of $m$-flat partitions and subsets of $2m$-flat partitions. Similarly, the identities conjectured by Kanade and Russell \cite{KR15}, proved for the cases related to $A_9^{(2)}$, partially by Bringmann, Jennings-Shaffer and Malhburg \cite{BJM19}, and completely by Rosengren \cite{Ro19}, can be interpreted in terms of identities  which link:
\begin{itemize}
\item for the case $A_7^{(2)}$, subsets of $3$-flat partitions to subsets of $9$-flat partitions,
\item for the case $A_9^{(2)}$, subsets of $4$-flat partitions to subsets of $12$-flat partitions.
\end{itemize}
An investigation in the framework of flat partitions could be a new way to approach the conjectured identities, and this could possibly lead to a more general family of such identities.


\begin{thebibliography}{999}

\bibitem{AN74}
G.E.  ANDREWS, \emph{An Analytic Generalization of the Rogers-Ramanujan Identities for Odd Moduli}, Proc. Nat. Acad. Sci. USA \textbf{71(10)} (1974), 4082-4085.

\bibitem{Bes94} 
C. BESSENRODT, \emph{A bijection for Lebesgue's partition identity in the spirit of Sylvester}, Discrete
Math. \textbf{132} (1994), 1--10.

\bibitem{Bre80}
D. BRESSOUD, \emph{Analytic and combinatorial generalizations of the Rogers-Ramanujan identities},
Mem. Amer. Math. Soc. \textbf{24} (1980), no. 227, 54 pp.

\bibitem{BJM19}
K. BRINGMANN, C. JENNINGS-SHAFFER and K. MAHLBURG, \emph{Proofs and reductions of various conjectured partition identities of Kanade and Russell}, J.  Reine Angew. Math.,  doi:10.1515/crelle-2019-0012.

\bibitem{DK1}
J. DOUSSE and I. KONAN, \emph{Generalizations of Capparelli's and Primc's identities, I: coloured Frobenius partitions and combinatorial proofs}, arXiv:1911.13191.

\bibitem{DK2}
J. DOUSSE and I. KONAN, \emph{Generalizations of Capparelli's and Primc's identities, II: perfect crystals of type $A_{n-1}^{(1)}$ and explicit character formulas}, arXiv:1911.13189.

\bibitem{FK80}
I. B. FRENKEL and V. G. KAC, \emph{Basic representations of affine Lie algebras and dual resonance models}, Invent.
Math., \textbf{62}(1980), 23--66.

\bibitem{G83}
J.W.L. GLAISHER, \emph{A theorem in partitions}, Messenger of Math. \textbf{12}(1883), 158--170.

\bibitem{HK02} J. HONG and S.-J. KANG, \emph{Introduction to Quantum Groups and Crystal Bases}, (GSM42). Grad. Stud. Math. AMS, \textbf{02} (2002).

\bibitem{KR15}
S. KANADE and M. RUSSELL, \emph{IdentityFinder and some new identities of Rogers-Ramanujan type}, Exp. Math. \textbf{24} (2015), 419--423.

\bibitem{(KMN)$^2$a} S.-J. KANG, M. KASHIWARA, K.C. MISRA,
T. MIWA, T. NAKASHIMA, and A. NAKAYASHIKI,  \emph{Affine crystals and vertex models,
 Infinite Analysis, Part A, B} (Kyoto, 1991),  449--484, Adv. Ser. Math. Phys. \textbf{16},  World Sci. Publishing, River Edge, NJ, 1992.


\bibitem{(KMN)$^2$b} S.-J. KANG, M. KASHIWARA, K.C. MISRA,
T. MIWA, T. NAKASHIMA, and A. NAKAYASHIKI, \emph{Perfect crystals of quantum affine
Lie algebras}, Duke Math. J. \textbf{68} (1992), 499-607.

\bibitem{KX19} W.J. KEITH and X. XIONG, \emph{Euler's partition theorem for all moduli and new companions to Rogers-Ramanujan-Andrews-Gordon identities}, Ramanujan J. \text{49} (2019), no. 3, 555--565.

\bibitem{IK19}
I. KONAN, \emph{A Bijective proof and Generalization of Siladi\'c's theorem}, Eur. Journ. of Comb. \textbf{87} (2020).

\bibitem{IK201}
I. KONAN, \emph{Weighted words at degree two, I: Bressoud's algorithm as an energy transfer}, accepted  for publication in Ann. of IHP D.


\bibitem{Mac15}
P.A. MAC MAHON, \emph{Combinatory Analysis}, vols. I and II
Cambridge University Press, Cambridge (1915, 1916)


\bibitem{PaPo98}
I. PAK and A. POSTNIKOV, \emph{A generalization of Sylvester's Identity}, Discrete Math. \textbf{178} (1998), 277--281.

\bibitem{RR19}
L. J. ROGERS and S. RAMANUJAN, \emph{Proof of certain identities in combinatory analysis}, Cambr. Phil. Soc. Proc. {\bf 19} (1919), 211-216.

\bibitem{Ro19}
H. ROSENGREN, \emph{Proofs of some partition identities conjectured by Kanade and Russell}, arXiv:1912.03689.

\bibitem{Si02}
I. SILADI\'C, \emph{Twisted $sl(3,\mathbb{C})^\sim$-modules and combinatorial identities}, Glas. Mat. Ser. III \textbf{52(72)} (2017), 53-77.

\bibitem{Sto82}
D. STOCKHOFE, \emph{Bijektive Abbildungen auf der Menge der Partitionen einer naturlichen Zahl},
Ph.D. thesis. Bayreuth. Math. Schr. \textbf{10} (1982), 1--59. 

\bibitem{Sy82} 
J. SYLVESTER, \emph{A constructive theory of partitions, arranged in three acts, an interact and an
exodion}, Amer. J. Math. \textbf{5} (1882), 251--330.

\bibitem{Wak01}
M. WAKIMOTO, \emph{Lectures on Infinite-Dimensional Lie Algebra}, World Scientific, (2001).

\end{thebibliography}
 \end{document}